\newtheorem{theorem}{Theorem}
\newtheorem{axiom}[theorem]{Axiom}
\newtheorem{conjecture}[theorem]{Conjecture}
\newtheorem{corollary}[theorem]{Corollary}
\newtheorem{definition}[theorem]{Definition}
\newtheorem{example}[theorem]{Example}
\newtheorem{exercise}[theorem]{Exercise}
\newtheorem{lemma}[theorem]{Lemma}
\newtheorem{proposition}[theorem]{Proposition}
\newtheorem{remark}[theorem]{Remark}
\newenvironment{proof}[1][Proof]{\noindent\textbf{#1.} }{\ \rule{0.5em}{0.5em}}
\let\pdfoutput=\undefined\fi
\chardef\@x10\chardef\@xv60
\def\tcitime{
\def\@time{%
  \@minute\time\@hour\@minute\divide\@hour\@xv
  \ifnum\@hour<\@x 0\fi\the\@hour:%
  \multiply\@hour\@xv\advance\@minute-\@hour
  \ifnum\@minute<\@x 0\fi\the\@minute
  }}%
\def\x@hyperref#1#2#3{%
   \catcode`\~ = 12
   \catcode`\$ = 12
   \catcode`\_ = 12
   \catcode`\# = 12
   \catcode`\& = 12
   \catcode`\% = 12
   \y@hyperref{#1}{#2}{#3}%
}
\def\y@hyperref#1#2#3#4{%
   #2\ref{#4}#3
   \catcode`\~ = 13
   \catcode`\$ = 3
   \catcode`\_ = 8
   \catcode`\# = 6
   \catcode`\& = 4
   \catcode`\% = 14
}
\def\QCTOpt[#1]#2{%
  \def\QCTOptB{#1}
  \def\QCTOptA{#2}
}
\def\QCTNOpt#1{%
  \def\QCTOptA{#1}
  \let\QCTOptB\empty
}
\def\Qct{%
  \@ifnextchar[{%
    \QCTOpt}{\QCTNOpt}
}
\def\QCBOpt[#1]#2{%
  \def\QCBOptB{#1}%
  \def\QCBOptA{#2}%
}
\def\QCBNOpt#1{%
  \def\QCBOptA{#1}%
  \let\QCBOptB\empty
}
\def\Qcb{%
  \@ifnextchar[{%
    \QCBOpt}{\QCBNOpt}%
}
\def\PrepCapArgs{%
  \ifx\QCBOptA\empty
    \ifx\QCTOptA\empty
      {}%
    \else
      \ifx\QCTOptB\empty
        {\QCTOptA}%
      \else
        [\QCTOptB]{\QCTOptA}%
      \fi
    \fi
  \else
    \ifx\QCBOptA\empty
      {}%
    \else
      \ifx\QCBOptB\empty
        {\QCBOptA}%
      \else
        [\QCBOptB]{\QCBOptA}%
      \fi
    \fi
  \fi
}
\def\GRAPHICSPS#1{%
 \ifcase\GRAPHICSTYPE
   \special{ps: #1}%
 \or
   \special{language "PS", include "#1"}%
 \fi
}%
\def\graffile#1#2#3#4{%
    \bgroup
	   \@inlabelfalse
       \leavevmode
       \@ifundefined{bbl@deactivate}{\def~{\string~}}{\activesoff}%
        \raise -#4 \BOXTHEFRAME{%
           \hbox to #2{\raise #3\hbox to #2{\null #1\hfil}}}%
    \egroup
}%
\def\draftbox#1#2#3#4{%
 \leavevmode\raise -#4 \hbox{%
  \frame{\rlap{\protect\tiny #1}\hbox to #2%
   {\vrule height#3 width\z@ depth\z@\hfil}%
  }%
 }%
}%
\let\nographics=\@msidraft
\newif\ifwasdraft
\def\GRAPHIC#1#2#3#4#5{%
   \ifnum\@msidraft=\@ne\draftbox{#2}{#3}{#4}{#5}%
   \else\graffile{#1}{#3}{#4}{#5}%
   \fi
}
\def\addtoLaTeXparams#1{%
    \edef\LaTeXparams{\LaTeXparams #1}}%
\newif\ifBoxFrame \BoxFramefalse
\newif\ifOverFrame \OverFramefalse
\newif\ifUnderFrame \UnderFramefalse
\def\BOXTHEFRAME#1{%
   \hbox{%
      \ifBoxFrame
         \frame{#1}%
      \else
         {#1}%
      \fi
   }%
}
\def\doFRAMEparams#1{\BoxFramefalse\OverFramefalse\UnderFramefalse\readFRAMEparams#1\end}%
\def\readFRAMEparams#1{%
 \ifx#1\end%
  \let\next=\relax
  \else
  \ifx#1i\dispkind=\z@\fi
  \ifx#1d\dispkind=\@ne\fi
  \ifx#1f\dispkind=\tw@\fi
  \ifx#1t\addtoLaTeXparams{t}\fi
  \ifx#1b\addtoLaTeXparams{b}\fi
  \ifx#1p\addtoLaTeXparams{p}\fi
  \ifx#1h\addtoLaTeXparams{h}\fi
  \ifx#1X\BoxFrametrue\fi
  \ifx#1O\OverFrametrue\fi
  \ifx#1U\UnderFrametrue\fi
  \ifx#1w
    \ifnum\@msidraft=1\wasdrafttrue\else\wasdraftfalse\fi
    \@msidraft=\@ne
  \fi
  \let\next=\readFRAMEparams
  \fi
 \next
 }%
\def\IFRAME#1#2#3#4#5#6{%
      \bgroup
      \let\QCTOptA\empty
      \let\QCTOptB\empty
      \let\QCBOptA\empty
      \let\QCBOptB\empty
      #6%
      \parindent=0pt
      \leftskip=0pt
      \rightskip=0pt
      \setbox0=\hbox{\QCBOptA}%
      \@tempdima=#1\relax
      \ifOverFrame
          \typeout{This is not implemented yet}%
          \show\HELP
      \else
         \ifdim\wd0>\@tempdima
            \advance\@tempdima by \@tempdima
            \ifdim\wd0 >\@tempdima
               \setbox1 =\vbox{%
                  \unskip\hbox to \@tempdima{\hfill\GRAPHIC{#5}{#4}{#1}{#2}{#3}\hfill}%
                  \unskip\hbox to \@tempdima{\parbox[b]{\@tempdima}{\QCBOptA}}%
               }%
               \wd1=\@tempdima
            \else
               \textwidth=\wd0
               \setbox1 =\vbox{%
                 \noindent\hbox to \wd0{\hfill\GRAPHIC{#5}{#4}{#1}{#2}{#3}\hfill}\\%
                 \noindent\hbox{\QCBOptA}%
               }%
               \wd1=\wd0
            \fi
         \else
            \ifdim\wd0>0pt
              \hsize=\@tempdima
              \setbox1=\vbox{%
                \unskip\GRAPHIC{#5}{#4}{#1}{#2}{0pt}%
                \break
                \unskip\hbox to \@tempdima{\hfill \QCBOptA\hfill}%
              }%
              \wd1=\@tempdima
           \else
              \hsize=\@tempdima
              \setbox1=\vbox{%
                \unskip\GRAPHIC{#5}{#4}{#1}{#2}{0pt}%
              }%
              \wd1=\@tempdima
           \fi
         \fi
         \@tempdimb=\ht1
         \advance\@tempdimb by -#2
         \advance\@tempdimb by #3
         \leavevmode
         \raise -\@tempdimb \hbox{\box1}%
      \fi
      \egroup%
}%
\def\DFRAME#1#2#3#4#5{%
  \vspace\topsep
  \hfil\break
  \bgroup
     \leftskip\@flushglue
	 \rightskip\@flushglue
	 \parindent\z@
	 \parfillskip\z@skip
     \let\QCTOptA\empty
     \let\QCTOptB\empty
     \let\QCBOptA\empty
     \let\QCBOptB\empty
	 \vbox\bgroup
        \ifOverFrame 
           #5\QCTOptA\par
        \fi
        \GRAPHIC{#4}{#3}{#1}{#2}{\z@}%
        \ifUnderFrame 
           \break#5\QCBOptA
        \fi
	 \egroup
  \egroup
  \vspace\topsep
  \break
}%
\def\FFRAME#1#2#3#4#5#6#7{%
  \@ifundefined{floatstyle}
    {
     \begin{figure}[#1]%
    }
    {
	 \ifx#1h
      \begin{figure}[H]%
	 \else
      \begin{figure}[#1]%
	 \fi
	}
  \let\QCTOptA\empty
  \let\QCTOptB\empty
  \let\QCBOptA\empty
  \let\QCBOptB\empty
  \ifOverFrame
    #4
    \ifx\QCTOptA\empty
    \else
      \ifx\QCTOptB\empty
        \caption{\QCTOptA}%
      \else
        \caption[\QCTOptB]{\QCTOptA}%
      \fi
    \fi
    \ifUnderFrame\else
      \label{#5}%
    \fi
  \else
    \UnderFrametrue%
  \fi
  \begin{center}\GRAPHIC{#7}{#6}{#2}{#3}{\z@}\end{center}%
  \ifUnderFrame
    #4
    \ifx\QCBOptA\empty
      \caption{}%
    \else
      \ifx\QCBOptB\empty
        \caption{\QCBOptA}%
      \else
        \caption[\QCBOptB]{\QCBOptA}%
      \fi
    \fi
    \label{#5}%
  \fi
  \end{figure}%
 }%
\def\makeactives{
  \catcode`\"=\active
  \catcode`\;=\active
  \catcode`\:=\active
  \catcode`\'=\active
  \catcode`\~=\active
}
   \gdef\activesoff{%
      \def"{\string"}%
      \def;{\string;}%
      \def:{\string:}%
      \def'{\string'}%
      \def~{\string~}%
    }
\def\FRAME#1#2#3#4#5#6#7#8{%
 \bgroup
 \ifnum\@msidraft=\@ne
   \wasdrafttrue
 \else
   \wasdraftfalse%
 \fi
 \def\LaTeXparams{}%
 \dispkind=\z@
 \def\LaTeXparams{}%
 \doFRAMEparams{#1}%
 \ifnum\dispkind=\z@\IFRAME{#2}{#3}{#4}{#7}{#8}{#5}\else
  \ifnum\dispkind=\@ne\DFRAME{#2}{#3}{#7}{#8}{#5}\else
   \ifnum\dispkind=\tw@
    \edef\@tempa{\noexpand\FFRAME{\LaTeXparams}}%
    \@tempa{#2}{#3}{#5}{#6}{#7}{#8}%
    \fi
   \fi
  \fi
  \ifwasdraft\@msidraft=1\else\@msidraft=0\fi{}%
  \egroup
 }%
\def\TEXUX#1{"texux"}
\def\limfunc#1{\mathop{\rm #1}}%
\def\func#1{\mathop{\rm #1}\nolimits}%
\long\def\QQQ#1#2{%
     \long\expandafter\def\csname#1\endcsname{#2}}%
\long\def\QQA#1#2{}%
\def\QTR#1#2{{\csname#1\endcsname {#2}}}%
\def\EXPAND#1[#2]#3{}%
\def\NOEXPAND#1[#2]#3{}%
\def\LaTeXparent#1{}%
\def\ChildStyles#1{}%
\def\ChildDefaults#1{}%
\def\QTagDef#1#2#3{}%
  \providecommand{\UNICODE}[2][]{\protect\rule{.1in}{.1in}}
  \providecommand{\U}[1]{\protect\rule{.1in}{.1in}}
\def\QQfnmark#1{\footnotemark}
 \def\abstract{%
  \if@twocolumn
   \section*{Abstract (Not appropriate in this style!)}%
   \else \small 
   \begin{center}{\bf Abstract\vspace{-.5em}\vspace{\z@}}\end{center}%
   \quotation 
   \fi
  }%
   \def\registered{\relax\ifmmode{}\r@gistered
                    \else$\m@th\r@gistered$\fi}%
 \def\r@gistered{^{\ooalign
  {\hfil\raise.07ex\hbox{$\scriptstyle\rm\text{R}$}\hfil\crcr
  \mathhexbox20D}}}}{}%
\newdimen\theight
\def\newfmtname{LaTeX2e}
  \DeclareOldFontCommand{\rm}{\normalfont\rmfamily}{\mathrm}
  \DeclareOldFontCommand{\sf}{\normalfont\sffamily}{\mathsf}
  \DeclareOldFontCommand{\tt}{\normalfont\ttfamily}{\mathtt}
  \DeclareOldFontCommand{\bf}{\normalfont\bfseries}{\mathbf}
  \DeclareOldFontCommand{\it}{\normalfont\itshape}{\mathit}
  \DeclareOldFontCommand{\sl}{\normalfont\slshape}{\@nomath\sl}
  \DeclareOldFontCommand{\sc}{\normalfont\scshape}{\@nomath\sc}
\def\alpha{{\Greekmath 010B}}%
\def\beta{{\Greekmath 010C}}%
\def\gamma{{\Greekmath 010D}}%
\def\delta{{\Greekmath 010E}}%
\def\epsilon{{\Greekmath 010F}}%
\def\zeta{{\Greekmath 0110}}%
\def\eta{{\Greekmath 0111}}%
\def\theta{{\Greekmath 0112}}%
\def\iota{{\Greekmath 0113}}%
\def\kappa{{\Greekmath 0114}}%
\def\lambda{{\Greekmath 0115}}%
\def\mu{{\Greekmath 0116}}%
\def\nu{{\Greekmath 0117}}%
\def\xi{{\Greekmath 0118}}%
\def\pi{{\Greekmath 0119}}%
\def\rho{{\Greekmath 011A}}%
\def\sigma{{\Greekmath 011B}}%
\def\tau{{\Greekmath 011C}}%
\def\upsilon{{\Greekmath 011D}}%
\def\phi{{\Greekmath 011E}}%
\def\chi{{\Greekmath 011F}}%
\def\psi{{\Greekmath 0120}}%
\def\omega{{\Greekmath 0121}}%
\def\varepsilon{{\Greekmath 0122}}%
\def\vartheta{{\Greekmath 0123}}%
\def\varpi{{\Greekmath 0124}}%
\def\varrho{{\Greekmath 0125}}%
\def\varsigma{{\Greekmath 0126}}%
\def\varphi{{\Greekmath 0127}}%
\def\nabla{{\Greekmath 0272}}
\def\FindBoldGroup{%
   {\setbox0=\hbox{$\mathbf{x\global\edef\theboldgroup{\the\mathgroup}}$}}%
}
\def\Greekmath#1#2#3#4{%
    \if@compatibility
        \ifnum\mathgroup=\symbold
           \mathchoice{\mbox{\boldmath$\displaystyle\mathchar"#1#2#3#4$}}%
                      {\mbox{\boldmath$\textstyle\mathchar"#1#2#3#4$}}%
                      {\mbox{\boldmath$\scriptstyle\mathchar"#1#2#3#4$}}%
                      {\mbox{\boldmath$\scriptscriptstyle\mathchar"#1#2#3#4$}}%
        \else
           \mathchar"#1#2#3#4%
        \fi 
    \else 
        \FindBoldGroup
        \ifnum\mathgroup=\theboldgroup 
           \mathchoice{\mbox{\boldmath$\displaystyle\mathchar"#1#2#3#4$}}%
                      {\mbox{\boldmath$\textstyle\mathchar"#1#2#3#4$}}%
                      {\mbox{\boldmath$\scriptstyle\mathchar"#1#2#3#4$}}%
                      {\mbox{\boldmath$\scriptscriptstyle\mathchar"#1#2#3#4$}}%
        \else
           \mathchar"#1#2#3#4%
        \fi     	    
	  \fi}
\newif\ifGreekBold  \GreekBoldfalse
\let\SAVEPBF=\pbf
\def\pbf{\GreekBoldtrue\SAVEPBF}%
  \newcounter{equationnumber}  
  \def\mathletters{%
     \addtocounter{equation}{1}
     \edef\@currentlabel{\theequation}%
     \setcounter{equationnumber}{\c@equation}
     \setcounter{equation}{0}%
     \edef\theequation{\@currentlabel\noexpand\alph{equation}}%
  }
    \def\BibTeX{{\rm B\kern-.05em{\sc i\kern-.025em b}\kern-.08em
                 T\kern-.1667em\lower.7ex\hbox{E}\kern-.125emX}}}{}%
\def\AmS{{\protect\usefont{OMS}{cmsy}{m}{n}%
                A\kern-.1667em\lower.5ex\hbox{M}\kern-.125emS}}}{}%
\def\@@eqncr{\let\@tempa\relax
    \ifcase\@eqcnt \def\@tempa{& & &}\or \def\@tempa{& &}%
      \else \def\@tempa{&}\fi
     \@tempa
     \if@eqnsw
        \iftag@
           \@taggnum
        \else
           \@eqnnum\stepcounter{equation}%
        \fi
     \fi
     \global\tag@false
     \global\@eqnswtrue
     \global\@eqcnt\z@\cr}
\def\TCItag{\@ifnextchar*{\@TCItagstar}{\@TCItag}}
\def\@TCItag#1{%
    \global\tag@true
    \global\def\@taggnum{(#1)}%
    \global\def\@currentlabel{#1}}
\def\@TCItagstar*#1{%
    \global\tag@true
    \global\def\@taggnum{#1}%
    \global\def\@currentlabel{#1}}
\def\tint{\msi@int\textstyle\int}%
\def\tiint{\msi@int\textstyle\iint}%
\def\tiiint{\msi@int\textstyle\iiint}%
\def\tiiiint{\msi@int\textstyle\iiiint}%
\def\tidotsint{\msi@int\textstyle\idotsint}%
\def\toint{\msi@int\textstyle\oint}%
\newtoks\temptoksa
\newtoks\temptoksb
\newtoks\temptoksc
\def\msi@int#1#2{%
 \def\@temp{{#1#2\the\temptoksc_{\the\temptoksa}^{\the\temptoksb}}}%
 \futurelet\@nextcs
 \@int
}
\def\@int{%
   \ifx\@nextcs\limits
      \typeout{Found limits}%
      \temptoksc={\limits}%
	  \let\@next\@intgobble%
   \else\ifx\@nextcs\nolimits
      \typeout{Found nolimits}%
      \temptoksc={\nolimits}%
	  \let\@next\@intgobble%
   \else
      \typeout{Did not find limits or no limits}%
      \temptoksc={}%
      \let\@next\msi@limits%
   \fi\fi
   \@next   
}%
\def\@intgobble#1{%
   \typeout{arg is #1}%
   \msi@limits
}
\def\msi@limits{%
   \temptoksa={}%
   \temptoksb={}%
   \@ifnextchar_{\@limitsa}{\@limitsb}%
}
\def\@limitsa_#1{%
   \temptoksa={#1}%
   \@ifnextchar^{\@limitsc}{\@temp}%
}
\def\@limitsb{%
   \@ifnextchar^{\@limitsc}{\@temp}%
}
\def\@limitsc^#1{%
   \temptoksb={#1}%
   \@ifnextchar_{\@limitsd}{\@temp}%
}
\def\@limitsd_#1{%
   \temptoksa={#1}%
   \@temp
}
\def\dint{\msi@int\displaystyle\int}%
\def\diint{\msi@int\displaystyle\iint}%
\def\diiint{\msi@int\displaystyle\iiint}%
\def\diiiint{\msi@int\displaystyle\iiiint}%
\def\didotsint{\msi@int\displaystyle\idotsint}%
\def\doint{\msi@int\displaystyle\oint}%
\def\dsum{\mathop{\displaystyle \sum }}%
\def\ExitTCILatex{\makeatother }
\if@compatibility\message{amsmath already loaded}\fi\aftergroup\ExitTCILatex}
\if@compatibility\message{amstex already loaded}\fi\aftergroup\ExitTCILatex}
\if@compatibility\message{amsgen already loaded}\fi\aftergroup\ExitTCILatex}
\let\DOTSI\relax
\def\RIfM@{\relax\ifmmode}%
\def\FN@{\futurelet\next}%
\def\iint{\DOTSI\intno@\tw@\FN@\ints@}%
\def\iiint{\DOTSI\intno@\thr@@\FN@\ints@}%
\def\iiiint{\DOTSI\intno@4 \FN@\ints@}%
\def\idotsint{\DOTSI\intno@\z@\FN@\ints@}%
\def\ints@{\findlimits@\ints@@}%
\newif\iflimtoken@
\newif\iflimits@
\def\findlimits@{\limtoken@true\ifx\next\limits\limits@true
 \else\ifx\next\nolimits\limits@false\else
 \limtoken@false\ifx\ilimits@\nolimits\limits@false\else
 \ifinner\limits@false\else\limits@true\fi\fi\fi\fi}%
\def\multint@{\int\ifnum\intno@=\z@\intdots@                          
 \else\intkern@\fi                                                    
 \ifnum\intno@>\tw@\int\intkern@\fi                                   
 \ifnum\intno@>\thr@@\int\intkern@\fi                                 
 \int}
\def\multintlimits@{\intop\ifnum\intno@=\z@\intdots@\else\intkern@\fi
 \ifnum\intno@>\tw@\intop\intkern@\fi
 \ifnum\intno@>\thr@@\intop\intkern@\fi\intop}%
\def\intic@{%
    \mathchoice{\hskip.5em}{\hskip.4em}{\hskip.4em}{\hskip.4em}}%
\def\negintic@{\mathchoice
 {\hskip-.5em}{\hskip-.4em}{\hskip-.4em}{\hskip-.4em}}%
\def\ints@@{\iflimtoken@                                              
 \def\ints@@@{\iflimits@\negintic@
   \mathop{\intic@\multintlimits@}\limits                             
  \else\multint@\nolimits\fi                                          
  \eat@}
 \else                                                                
 \def\ints@@@{\iflimits@\negintic@
  \mathop{\intic@\multintlimits@}\limits\else
  \multint@\nolimits\fi}\fi\ints@@@}%
\def\intkern@{\mathchoice{\!\!\!}{\!\!}{\!\!}{\!\!}}%
\def\plaincdots@{\mathinner{\cdotp\cdotp\cdotp}}%
\def\intdots@{\mathchoice{\plaincdots@}%
 {{\cdotp}\mkern1.5mu{\cdotp}\mkern1.5mu{\cdotp}}%
 {{\cdotp}\mkern1mu{\cdotp}\mkern1mu{\cdotp}}%
 {{\cdotp}\mkern1mu{\cdotp}\mkern1mu{\cdotp}}}%
\def\RIfM@{\relax\protect\ifmmode}
\def\text{\RIfM@\expandafter\text@\else\expandafter\mbox\fi}
\let\nfss@text\text
\def\text@#1{\mathchoice
   {\textdef@\displaystyle\f@size{#1}}%
   {\textdef@\textstyle\tf@size{\firstchoice@false #1}}%
   {\textdef@\textstyle\sf@size{\firstchoice@false #1}}%
   {\textdef@\textstyle \ssf@size{\firstchoice@false #1}}%
   \glb@settings}
\def\textdef@#1#2#3{\hbox{{%
                    \everymath{#1}%
                    \let\f@size#2\selectfont
                    #3}}}
\newif\iffirstchoice@
\def\Let@{\relax\iffalse{\fi\let\\=\cr\iffalse}\fi}%
\def\vspace@{\def\vspace##1{\crcr\noalign{\vskip##1\relax}}}%
\def\multilimits@{\bgroup\vspace@\Let@
 \baselineskip\fontdimen10 \scriptfont\tw@
 \advance\baselineskip\fontdimen12 \scriptfont\tw@
 \lineskip\thr@@\fontdimen8 \scriptfont\thr@@
 \lineskiplimit\lineskip
 \vbox\bgroup\ialign\bgroup\hfil$\m@th\scriptstyle{##}$\hfil\crcr}%
\def\Sb{_\multilimits@}%
\def\endSb{\crcr\egroup\egroup\egroup}%
\def\Sp{^\multilimits@}%
\newdimen\ex@
\def\rightarrowfill@#1{$#1\m@th\mathord-\mkern-6mu\cleaders
 \hbox{$#1\mkern-2mu\mathord-\mkern-2mu$}\hfill
 \mkern-6mu\mathord\rightarrow$}%
\def\leftarrowfill@#1{$#1\m@th\mathord\leftarrow\mkern-6mu\cleaders
 \hbox{$#1\mkern-2mu\mathord-\mkern-2mu$}\hfill\mkern-6mu\mathord-$}%
\def\leftrightarrowfill@#1{$#1\m@th\mathord\leftarrow
\mkern-6mu\cleaders
 \hbox{$#1\mkern-2mu\mathord-\mkern-2mu$}\hfill
 \mkern-6mu\mathord\rightarrow$}%
\def\overrightarrow{\mathpalette\overrightarrow@}%
\def\overrightarrow@#1#2{\vbox{\ialign{##\crcr\rightarrowfill@#1\crcr
 \noalign{\kern-\ex@\nointerlineskip}$\m@th\hfil#1#2\hfil$\crcr}}}%
\def\overleftarrow{\mathpalette\overleftarrow@}%
\def\overleftarrow@#1#2{\vbox{\ialign{##\crcr\leftarrowfill@#1\crcr
 \noalign{\kern-\ex@\nointerlineskip}$\m@th\hfil#1#2\hfil$\crcr}}}%
\def\overleftrightarrow{\mathpalette\overleftrightarrow@}%
\def\overleftrightarrow@#1#2{\vbox{\ialign{##\crcr
   \leftrightarrowfill@#1\crcr
 \noalign{\kern-\ex@\nointerlineskip}$\m@th\hfil#1#2\hfil$\crcr}}}%
\def\underrightarrow{\mathpalette\underrightarrow@}%
\def\underrightarrow@#1#2{\vtop{\ialign{##\crcr$\m@th\hfil#1#2\hfil
  $\crcr\noalign{\nointerlineskip}\rightarrowfill@#1\crcr}}}%
\def\underleftarrow{\mathpalette\underleftarrow@}%
\def\underleftarrow@#1#2{\vtop{\ialign{##\crcr$\m@th\hfil#1#2\hfil
  $\crcr\noalign{\nointerlineskip}\leftarrowfill@#1\crcr}}}%
\def\underleftrightarrow{\mathpalette\underleftrightarrow@}%
\def\underleftrightarrow@#1#2{\vtop{\ialign{##\crcr$\m@th
  \hfil#1#2\hfil$\crcr
 \noalign{\nointerlineskip}\leftrightarrowfill@#1\crcr}}}%
\def\qopnamewl@#1{\mathop{\operator@font#1}\nlimits@}
\let\nlimits@\displaylimits
\def\setboxz@h{\setbox\z@\hbox}
\def\varlim@#1#2{\mathop{\vtop{\ialign{##\crcr
 \hfil$#1\m@th\operator@font lim$\hfil\crcr
 \noalign{\nointerlineskip}#2#1\crcr
 \noalign{\nointerlineskip\kern-\ex@}\crcr}}}}
 \def\rightarrowfill@#1{\m@th\setboxz@h{$#1-$}\ht\z@\z@
  $#1\copy\z@\mkern-6mu\cleaders
  \hbox{$#1\mkern-2mu\box\z@\mkern-2mu$}\hfill
  \mkern-6mu\mathord\rightarrow$}
\def\leftarrowfill@#1{\m@th\setboxz@h{$#1-$}\ht\z@\z@
  $#1\mathord\leftarrow\mkern-6mu\cleaders
  \hbox{$#1\mkern-2mu\copy\z@\mkern-2mu$}\hfill
  \mkern-6mu\box\z@$}
\def\projlim{\qopnamewl@{proj\,lim}}
\def\injlim{\qopnamewl@{inj\,lim}}
\def\varinjlim{\mathpalette\varlim@\rightarrowfill@}
\def\varprojlim{\mathpalette\varlim@\leftarrowfill@}
\def\varliminf{\mathpalette\varliminf@{}}
\def\varliminf@#1{\mathop{\underline{\vrule\@depth.2\ex@\@width\z@
   \hbox{$#1\m@th\operator@font lim$}}}}
\def\varlimsup{\mathpalette\varlimsup@{}}
\def\varlimsup@#1{\mathop{\overline
  {\hbox{$#1\m@th\operator@font lim$}}}}
\def\align{\@verbatim \frenchspacing\@vobeyspaces \@alignverbatim
You are using the "align" environment in a style in which it is not defined.}
\let\csname endalign*\endcsname =\endtrivlist
\def\alignat{\@verbatim \frenchspacing\@vobeyspaces \@alignatverbatim
You are using the "alignat" environment in a style in which it is not defined.}
\let\csname endalignat*\endcsname =\endtrivlist
\def\xalignat{\@verbatim \frenchspacing\@vobeyspaces \@xalignatverbatim
You are using the "xalignat" environment in a style in which it is not defined.}
\let\csname endxalignat*\endcsname =\endtrivlist
\def\gather{\@verbatim \frenchspacing\@vobeyspaces \@gatherverbatim
You are using the "gather" environment in a style in which it is not defined.}
\let\csname endgather*\endcsname =\endtrivlist
\def\multiline{\@verbatim \frenchspacing\@vobeyspaces \@multilineverbatim
You are using the "multiline" environment in a style in which it is not defined.}
\let\csname endmultiline*\endcsname =\endtrivlist
\def\arrax{\@verbatim \frenchspacing\@vobeyspaces \@arraxverbatim
You are using a type of "array" construct that is only allowed in AmS-LaTeX.}
\def\tabulax{\@verbatim \frenchspacing\@vobeyspaces \@tabulaxverbatim
You are using a type of "tabular" construct that is only allowed in AmS-LaTeX.}
\let\csname endarrax*\endcsname =\endtrivlist
\let\csname endtabulax*\endcsname =\endtrivlist
 \def\endequation{%
     \ifmmode\ifinner 
      \iftag@
        \addtocounter{equation}{-1} 
        $\hfil
           \displaywidth\linewidth\@taggnum\egroup \endtrivlist
        \global\tag@false
        \global\@ignoretrue   
      \else
        $\hfil
           \displaywidth\linewidth\@eqnnum\egroup \endtrivlist
        \global\tag@false
        \global\@ignoretrue 
      \fi
     \else   
      \iftag@
        \addtocounter{equation}{-1} 
        \eqno \hbox{\@taggnum}
        \global\tag@false%
        $$\global\@ignoretrue
      \else
        \eqno \hbox{\@eqnnum}
        $$\global\@ignoretrue
      \fi
     \fi\fi
 } 
 \newif\iftag@ \tag@false
 \def\TCItag{\@ifnextchar*{\@TCItagstar}{\@TCItag}}
 \def\@TCItag#1{%
     \global\tag@true
     \global\def\@taggnum{(#1)}%
     \global\def\@currentlabel{#1}}
 \def\@TCItagstar*#1{%
     \global\tag@true
     \global\def\@taggnum{#1}%
     \global\def\@currentlabel{#1}}
     \def\tag{\@ifnextchar*{\@tagstar}{\@tag}}
     \def\@tag#1{%
         \global\tag@true
         \global\def\@taggnum{(#1)}}
     \def\@tagstar*#1{%
         \global\tag@true
         \global\def\@taggnum{#1}}
\def\dfrac#1#2{{\displaystyle {#1 \over #2}}}%
\begin{document}

\title{Boundary asymptotic analysis for an incompressible viscous flow:
Navier wall laws}
\author{M.\ El Jarroudi, A.\ Brillard \\
Universit\'{e} Abdelmalek Essa\^{a}di, FST Tanger\\
D\'{e}partement de Math\'{e}matiques, B.P. 416, Tanger, Morocco\\
Universit\'{e} de Haute-Alsace\\
Laboratoire de Gestion des Risques et Environnement\\
25 rue de Chemnitz, F-68200 Mulhouse, France}
\date{2008}
\maketitle

\begin{abstract}
We consider a new way of establishing Navier wall laws. Considering a
bounded domain $\Omega $ of $\mathbf{R}^{N}$, $N=2,3$, surrounded by a thin
layer $\Sigma _{\varepsilon }$, along a part $\Gamma _{2}$ of its boundary $%
\partial \Omega $, we consider a Navier-Stokes flow in $\Omega \cup \partial
\Omega \cup \Sigma _{\varepsilon }$ with Reynolds' number of order $%
1/\varepsilon $ in $\Sigma _{\varepsilon }$. Using $\Gamma $-convergence
arguments, we describe the asymptotic behaviour of the solution of this
problem and get a general Navier law involving a matrix of Borel measures
having the same support contained in the interface $\Gamma _{2}$. We then
consider two special cases where we characterize this matrix of measures. As
a further application, we consider an optimal control problem within this
context.

Navier law, Navier-Stokes flow, $\Gamma $-convergence, asymptotic behaviour,
optimal control problem.

\textbf{AMS\ Classification.}\ 76D05, 76D10, 76M45, 35Q30.
\end{abstract}

\section{Introduction}

A common hypothesis used in fluid mechanics is that, at the interface
between a solid and a fluid, the velocity $u$ of the fluid is equal to that
of the solid. If the solid is at rest, the velocity of the fluid must thus
vanish: $u=0$, on the boundary of the solid. These are the so-called rigid
boundary conditions. When writing this condition, one assumes that the fluid
perfectly adheres to the solid.

This hypothesis has not always been accepted for a viscous fluid, although
some verifications have been made through experiments.\ G.\ Taylor indeed
verified in 1923 the correctness of this hypothesis, when studying the
stability of the motion of a fluid flowing between two cylinders in rotation
(Taylor-Couette's problem).

Another approach has then been suggested. A thin layer adhering to the solid
exists with a tangential velocity different from 0 on the surface of the
solid. Navier suggested that this tangential velocity is proportional to the
shearing strains and thus is given through%
\[
\left\{ 
\begin{array}{rll}
\left( Id-n\otimes n\right) \nu \dfrac{\partial u}{\partial n} & = & \kappa
u, \\ 
u\cdot n & = & 0,%
\end{array}%
\right.
\]%
where $Id$ is the identity matrix, $n$ is the unit outer normal vector to
the surface of the solid, $\nu $ is the viscosity of the fluid and $\kappa $
is a proportionality coefficient.

Many works have already been devoted to the derivation of Navier boundary
conditions, see for example \cite{Ach-Pir}, \cite{Ach-Pir-Val}, \cite%
{Jag-Mik} and \cite{Mar}. In \cite{Ach-Pir} and \cite{Ach-Pir-Val}, the
authors considered a viscous and incompressible fluid, whose Reynolds number
is of order $1/\varepsilon $, flowing in a domain with rugosities of
thinness $\varepsilon $ and $\varepsilon $-periodically distributed on its
boundary surface, and assuming an homogeneous Dirichlet boundary condition
on the boundary of these rugosities. Using the asymptotic expansion method,
they deduced, at the first-order level, a kind of Navier wall law%
\[
\left\{ 
\begin{array}{rll}
\varepsilon \left( Id-n\otimes n\right) \nu \dfrac{\partial u}{\partial n} & 
= & \kappa u, \\ 
u\cdot n & = & 0.%
\end{array}%
\right.
\]

In \cite{Jag-Mik}, the authors considered the laminar flow in a pipe with
rough pieces $\varepsilon $-periodically distributed on the surface of the
pipe, and imposing an homogeneous Dirichlet boundary condition on the
boundary of these rough pieces. They used an homogenization process and
obtained a Navier wall law, computing a corrector term. In \cite{Mar}, the
author considered an $\varepsilon $-periodic geometry built with rough
pieces of thinness $\varepsilon ^{m}$ and imposed there a boundary condition
of the type%
\[
\left\{ 
\begin{array}{rll}
\left( Id-n\otimes n\right) \nu \dfrac{\partial u^{\varepsilon }}{\partial n}
& = & \varepsilon ^{k}\left( g^{\varepsilon }-\kappa u^{\varepsilon }\right)
, \\ 
u^{\varepsilon }\cdot n & = & 0.%
\end{array}%
\right.
\]

The following limit law was obtained, depending on $k$ and $m$%
\[
\left\{ 
\begin{array}{rll}
\left( Id-n\otimes n\right) \nu \dfrac{\partial u}{\partial n} & = & \lambda
\left( g-\kappa u\right) , \\ 
u\cdot n & = & 0.%
\end{array}%
\right.
\]

Throughout the present work, we consider a bounded domain $\Omega \subset 
\mathbf{R}^{N}$, $N=2,3$, whose boundary $\partial \Omega $ is Lipschitz
continuous. We suppose that $\partial \Omega =\Gamma _{1}\cup \Gamma _{2}$,
with $\left\vert \Gamma _{1}\right\vert $, $\left\vert \Gamma
_{2}\right\vert >0$, where $\left\vert \Gamma _{i}\right\vert $ denotes the
Lebesgue measure of $\Gamma _{i}$. We suppose that near $\Gamma _{2}$ there
exists a thin layer $\Sigma _{\varepsilon }$ of thinness $\varepsilon >0$,
which extends $\Omega $ into $\Omega _{\varepsilon }=\Omega \cup \Gamma
_{2}\cup \Sigma _{\varepsilon }$.

\FRAME{fhFU}{5.4542cm}{3.2202cm}{0pt}{\Qcb{The domain under consideration.}}{%
}{navierirr.bmp}{\special{language "Scientific Word";type
"GRAPHIC";maintain-aspect-ratio TRUE;display "USEDEF";valid_file "F";width
5.4542cm;height 3.2202cm;depth 0pt;original-width 1.7668in;original-height
1.0326in;cropleft "0";croptop "1";cropright "1";cropbottom "0";filename
'navierirr.bmp';file-properties "XNPEU";}}

We consider the steady-state, viscous and incompressible Navier-Stokes flow
in $\Omega _{\varepsilon }$%
\begin{equation}
\left\{ 
\begin{array}{rrll}
-\nu \Delta u^{\varepsilon }+\left( u^{\varepsilon }\cdot \nabla \right)
u^{\varepsilon }+\nabla p^{\varepsilon } & = & f & \text{in }\Omega , \\ 
-\nu \varepsilon \Delta u^{\varepsilon }+\left( u^{\varepsilon }\cdot \nabla
\right) u^{\varepsilon }+\nabla p^{\varepsilon } & = & f & \text{in }\Sigma
_{\varepsilon }, \\ 
\limfunc{div}\left( u^{\varepsilon }\right) & = & 0 & \text{in }\Omega
_{\varepsilon }, \\ 
\left( u^{\varepsilon }\right) ^{+} & = & \left( u^{\varepsilon }\right) ^{-}
& \text{on }\Gamma _{2}, \\ 
\nu \left( \dfrac{\partial u^{\varepsilon }}{\partial n}\right) ^{+} & = & 
\nu \varepsilon \left( \dfrac{\partial u^{\varepsilon }}{\partial n}\right)
^{-} & \text{on }\Gamma _{2}, \\ 
u^{\varepsilon } & = & 0 & \text{on }\partial \Omega _{\varepsilon },%
\end{array}%
\right.  \label{Peps}
\end{equation}%
where the superscript $+$ (resp. $-$) denotes the trace seen from $\Omega $
(resp. from $\Sigma _{\varepsilon }$) on $\Gamma _{2}$. The thin layer $%
\Sigma _{\varepsilon }$ is here considered as an unstable thin boundary
layer whose Reynolds' number $R_{\varepsilon }$ is of order $1/\varepsilon $
(see \cite[pages 239-240]{Lan-Lif}, where Reynolds' number is allowed to
depend on the thinness of the layer). In the problem (\ref{Peps}), we
suppose that the density $f$ of volumic forces belongs to $\mathbf{L}%
^{\infty }\left( \mathbf{R}^{N},\mathbf{R}^{N}\right) $.

Our purpose is to describe the asymptotic behavior of the solution $%
u^{\varepsilon }$ of (\ref{Peps}) when $\varepsilon $ goes to $0$, in order
to derive the Navier wall law. We use $\Gamma $-convergence arguments (see 
\cite{Dal1} for the definition and the properties of the $\Gamma $%
-convergence) in order to characterize the limit problem. Our approach is
based on the tools developed in \cite{Ace-But}, \cite{But-Dal-Mos}, \cite%
{Dal-Mos1}, \cite{Dal-Mos2} and \cite{Dal-Def-Vit}. On $\Gamma _{2}$, we
will get a general Navier law of the kind%
\[
\left\{ 
\begin{array}{rll}
\left( Id-n\otimes n\right) \nu \dfrac{\partial u}{\partial n}+\mu ^{\bullet
}u & = & 0, \\ 
u\cdot n & = & 0,%
\end{array}%
\right.
\]%
where $\mu ^{\bullet }$ is a symmetric matrix $\left( \mu _{ij}\right)
_{i,j=1,\ldots ,N}$ of Borel measures having their support contained in $%
\Gamma _{2}$, which do not charge the polar subsets of $\mathbf{R}^{N}$ and
which satisfy $\mu _{ij}\left( B\right) \zeta _{i}\zeta _{j}\geq 0$,\ $%
\forall \zeta \in \mathbf{R}^{N}$, $\forall B\in \mathcal{B}\left( \mathbf{R}%
^{N}\right) $, where $\mathcal{B}\left( \mathbf{R}^{N}\right) $ denotes the
set of all Borel subsets of $\mathbf{R}^{N}$ and where we have used the
summation convention with respect to repeated indices.

As a first special case, we prove that when $\Omega \subset \left\{
x_{3}>0\right\} $, $\Gamma _{2}=\partial \Omega \cap \left\{ x_{3}=0\right\} 
$ and%
\[
\Sigma _{\varepsilon }=\left\{ x\in \mathbf{R}^{3}\mid x^{\prime }=\left(
x_{1},x_{2}\right) \in \Gamma _{2}\text{, }-\varepsilon h\left( \frac{%
x^{\prime }}{\varepsilon }\right) <x_{3}<0\right\} ,
\]%
where $h$ is a periodic function, we get on $\Gamma _{2}$ the Robin type
boundary conditions%
\[
\left\{ 
\begin{array}{rll}
\dfrac{\partial u_{1}}{\partial x_{3}}\left( x^{\prime },0\right) & = & 
-c_{1}u_{1}\left( x^{\prime },0\right) , \\ 
\dfrac{\partial u_{2}}{\partial x_{3}}\left( x^{\prime },0\right) & = & 
-c_{2}u_{2}\left( x^{\prime },0\right) , \\ 
u_{3}\left( x^{\prime },0\right) & = & 0,%
\end{array}%
\right.
\]%
where $c_{m}$, $m=1,2$, are constants which will be computed in terms of the
solution of appropriate local thin layer problems (\ref{cm}). This situation
can be generalized to the case of a general open and bounded set $\Omega $,
surrounded on a part of its boundary by such a rough thin layer.

As a second example, we will consider the case where%
\[
\Sigma _{\varepsilon }=\left\{ s+tn\left( s\right) \mid s\in \Gamma _{2}%
\text{, }-\varepsilon h\left( s\right) <x_{3}<0\right\} ,
\]%
where $h$ is a Lipschitz continuous and positive function on $\Gamma _{2}$.\
We here prove that Navier's law takes the following expression on $\Gamma
_{2}$%
\[
\left\{ 
\begin{array}{rll}
\left( Id-n\otimes n\right) \dfrac{\partial u}{\partial n}+\dfrac{1}{h}u & =
& 0, \\ 
u\cdot n & = & 0.%
\end{array}%
\right.
\]

In the last part of this work, we consider an optimal control problem.
Choosing $m>0$, we consider the set $\Xi _{m}$ of all the matrices $\mathbf{h%
}=Diag\left( h_{i}\right) _{i=1,..,N}$ of functions $h_{i}:\Gamma
_{2}\rightarrow \left[ 0,+\infty \right] $, which are $d\Gamma _{2}$%
-measurable and satisfy $\int\nolimits_{\Gamma _{2}}h_{i}d\Gamma _{2}=m$, $%
\forall i=1,\ldots ,N$. We suppose that $\Omega $ is smooth enough and
consider the following problem with Navier conditions on $\Gamma _{2}$%
\begin{equation}
\left\{ 
\begin{array}{rrrl}
-\nu \Delta u^{h}+\left( u^{h}\cdot \nabla \right) u^{h}+\nabla p^{h} & = & f
& \text{in }\Omega , \\ 
\limfunc{div}\left( u^{h}\right) & = & 0 & \text{in }\Omega , \\ 
\mathbf{h}\left( Id-n\otimes n\right) \dfrac{\partial u^{h}}{\partial n}%
+u^{h} & = & 0 & \text{on }\Gamma _{2}, \\ 
u^{h}\cdot n & = & 0 & \text{on }\Gamma _{2}.%
\end{array}%
\right.  \label{Ph}
\end{equation}

Let $\left( u^{h},p^{h}\right) $ be the solution of (\ref{Ph}) and define
the functional $\mathbf{F}$ through%
\[
\mathbf{F}\left( \mathbf{h},u\right) =\left\{ 
\begin{array}{ll}
\dfrac{\nu }{2}\dint_{\Omega }\left\vert \nabla u\right\vert ^{2}dx+\dfrac{1%
}{2}\underset{i=1}{\overset{N}{\dsum }}\dint_{\Gamma _{2}}\dfrac{\left(
u_{i}\right) ^{2}}{h_{i}}d\Gamma _{2} &  \\ 
\qquad +\dint_{\Omega }\left( u^{h}\cdot \nabla \right) u^{h}\cdot
vdx-\dint_{\Omega }f\cdot udx & \text{if }u\in \mathbf{V}_{0,\Gamma
_{1}}\left( \Omega \right) , \\ 
+\infty & \text{otherwise,}%
\end{array}%
\right.
\]%
where $\mathbf{V}_{0,\Gamma _{1}}\left( \Omega \right) $ is the functional
space defined in (\ref{spaces}).\ We consider the optimal control problem%
\begin{equation}
\underset{\mathbf{h}\in \Xi _{m}}{\min }\underset{\text{ }u\in \mathbf{V}%
_{0,\Gamma _{1}}\left( \Omega \right) }{\min }\mathbf{F}\left( \mathbf{h}%
,u\right) .  \label{Popm}
\end{equation}

In the last section of this work, we describe the asymptotic behavior of the
solution of (\ref{Popm}), when $m$ goes to $0$, and characterize the zones
where some thin boundary layer appears. A problem of this kind has been
considered in \cite{Esp-Rie}, but for a linear diffusion problem.

\section{Functional framework}

We define the ($H^{1}\left( \mathbf{R}^{N}\right) $) capacity of any compact
subset $K$ of $\mathbf{R}^{N}$ as%
\[
\begin{array}{l}
Cap\left( K\right) \\ 
\qquad =\inf \left\{ \dint\nolimits_{\mathbf{R}^{N}}\left\vert \nabla
\varphi \right\vert ^{2}dx+\dint\nolimits_{\mathbf{R}^{N}}\left\vert \varphi
\right\vert ^{2}dx\mid \varphi \in \mathbf{C}_{c}^{\infty }\left( \mathbf{R}%
^{N}\right) \text{, }\varphi \geq 1\text{ on }K\right\} .%
\end{array}%
\]

If $U$ is an open subset of $\mathbf{R}^{N}$, then we define%
\[
Cap\left( U\right) =\sup \left\{ Cap\left( K\right) \mid K\subset U\text{, }K%
\text{ compact}\right\} .
\]

If $B\subset \mathbf{R}^{N}$ is a Borel subset of $\mathbf{R}^{N}$, then we
define%
\[
Cap\left( B\right) =\inf \left\{ Cap\left( U\right) \mid B\subset U\text{, }U%
\text{ open}\right\} .
\]

\begin{definition}
Let $\mathcal{B}\left( \mathbf{R}^{N}\right) $ be the $\sigma $-algebra of
all Borel subsets of $\mathbf{R}^{N}$.

\begin{enumerate}
\item A property is said to be true quasi-everywhere (q.e.) on $B\in 
\mathcal{B}\left( \mathbf{R}^{N}\right) $ if it is true except on a subset
of $B$ of capacity $Cap$ equal to 0.

\item A function $u:B\rightarrow \overline{\mathbf{R}}$, with $B\in \mathcal{%
B}\left( \mathbf{R}^{N}\right) $, is quasi-continuous on $B$ if, for every $%
\varepsilon >0$, there exists an open subset $U\subset B$ with $Cap\left(
U\right) <\varepsilon $ and such that the restriction of $u$ on $B\setminus
U $ is continuous.

\item Every function $u\in \mathbf{H}^{1}\left( \mathbf{R}^{N}\right) $ has
a quasi-continuous representative $\widetilde{u}$, which is unique for the
equality quasi-everywhere in $\mathbf{R}^{N}$, (see \cite{Zie}, for
example). $\widetilde{u}$\ is given through%
\[
\widetilde{u}\left( x\right) =\underset{r\rightarrow 0^{+}}{\lim }\frac{1}{%
\left\vert B\left( x,r\right) \right\vert }\dint\nolimits_{B\left(
x,r\right) }u\left( y\right) dy,
\]%
for q.e.\ $x\in \mathbf{R}^{N}$, where $\left\vert B\left( x,r\right)
\right\vert $ is the Lebesgue measure of the ball $B\left( x,r\right) $ of $%
\mathbf{R}^{N}$ of radius $r>0$ and centered at $x$.
\end{enumerate}
\end{definition}

We define some notions concerning families of subsets of $\mathbf{R}^{N}$.

\begin{definition}
\begin{enumerate}
\item A subset $\mathcal{D\subset B}\left( \mathbf{R}^{N}\right) $ is a
dense family in $\mathcal{B}\left( \mathbf{R}^{N}\right) $ if, for every $%
A,B\in \mathcal{B}\left( \mathbf{R}^{N}\right) $ with $\overline{A}\subset 
\overset{o}{B}$, there exists $D\in \mathcal{D}$ such that: $\overline{A}%
\subset \overset{o}{D}\subset \overline{D}\subset \overset{o}{B}$, where $%
\overset{o}{A}$ (resp. $\overline{A}$) denotes the interior (resp. the
closure) of $A$.

\item A subset $\mathcal{R\subset B}\left( \mathbf{R}^{N}\right) $ is a rich
family in $\mathcal{B}\left( \mathbf{R}^{N}\right) $ if, for every family $%
\left( A_{t}\right) _{t\in \left] 0,1\right[ }\subset \mathcal{B}\left( 
\mathbf{R}^{N}\right) $ such that $\overline{A}_{s}\subset \overset{o}{A_{t}}
$, for every $s<t$, the set $\left\{ t\in \left] 0,1\right[ \mid A_{t}\notin 
\mathcal{R}\right\} $ is at most countable.
\end{enumerate}
\end{definition}

Let $\mathcal{O}\left( \mathbf{R}^{N}\right) $ be the set of all open
subsets of $\mathbf{R}^{N}$. We consider the class $\mathbb{F}$ of
functionals $F$ from $\mathbf{H}^{1}\left( \mathbf{R}^{N},\mathbf{R}%
^{N}\right) \times \mathcal{O}\left( \mathbf{R}^{N}\right) $ to $\left[
0,+\infty \right] $ satisfying:

\begin{enumerate}
\item[i)] (\textit{Lower semi-continuity}): for every open subset $\omega
\in \mathcal{O}\left( \mathbf{R}^{N}\right) $, the functional $u\mapsto
F\left( u,\omega \right) $ is lower semi-continuous with respect to the
strong topology of $\mathbf{H}^{1}\left( \mathbf{R}^{N},\mathbf{R}%
^{N}\right) $;

\item[ii)] \textit{(Measure property)}: for every $u\in \mathbf{H}^{1}\left( 
\mathbf{R}^{N},\mathbf{R}^{N}\right) $, $\omega \mapsto F\left( u,\omega
\right) $ is the restriction to $\mathcal{O}\left( \mathbf{R}^{N}\right) $
of some Borel measure still denoted $F\left( u,\omega \right) $;

\item[iii)] (\textit{Localization)}: for every $\omega \in \mathcal{O}\left( 
\mathbf{R}^{N}\right) $ and every $u,v\in \mathbf{H}^{1}\left( \mathbf{R}%
^{N},\mathbf{R}^{N}\right) $:%
\[
u_{\mid _{\omega }}=v_{\mid _{\omega }}\Rightarrow F\left( u,\omega \right)
=F\left( v,\omega \right) ;
\]

\item[iv)] ($\mathbf{C}^{1}$\textit{-convexity}): for every $\omega \in 
\mathcal{O}\left( \mathbf{R}^{N}\right) $, the functional $u\mapsto F\left(
u,\omega \right) $ is convex on $\mathbf{H}^{1}\left( \mathbf{R}^{N},\mathbf{%
R}^{N}\right) $ and moreover%
\[
\forall \varphi \in \mathbf{C}^{1}\left( \mathbf{R}^{N}\right) \text{, }%
0\leq \varphi \leq 1:F\left( \varphi u+\left( 1-\varphi \right) v,\omega
\right) \leq F\left( u,\omega \right) +F\left( v,\omega \right) \text{.}
\]
\end{enumerate}

\begin{example}
Let us define $\Gamma _{2,\varepsilon }=\partial \Omega _{\varepsilon }\cap 
\overline{\Sigma _{\varepsilon }}$, for some thin layer $\Sigma
_{\varepsilon }$, as defined above. We consider the functional $%
F^{\varepsilon }$ defined on the space $\mathbf{H}^{1}\left( \mathbf{R}^{N},%
\mathbf{R}^{N}\right) \times \mathcal{O}\left( \mathbf{R}^{N}\right) $
through%
\begin{equation}
F^{\varepsilon }\left( u,\omega \right) =\left\{ 
\begin{array}{ll}
0 & \text{if }\widetilde{u}=0\text{, q.e. on }\Gamma _{2,\varepsilon }\cap
\omega , \\ 
+\infty & \text{otherwise.}%
\end{array}%
\right.  \label{Fepse}
\end{equation}

One can prove that $F^{\varepsilon }$ belongs to $\mathbb{F}$, for every $%
\varepsilon >0$.
\end{example}

Let us set the following definitions.

\begin{definition}
Let $Cap$ be the above-defined capacity.

\begin{enumerate}
\item A\ Borel measure $\lambda $ is absolutely continuous with respect to
the capacity $Cap$ if%
\[
\forall B\in \mathcal{B}\left( \mathbf{R}^{N}\right) :Cap\left( B\right)
=0\Rightarrow \lambda \left( B\right) =0.
\]

\item $\mathcal{M}_{0}$ is the set of nonnegative Borel measures $\mathbf{R}%
^{N}$ which are absolutely continuous with respect to the capacity $Cap$.
\end{enumerate}
\end{definition}

We have the following example of measure in $\mathcal{M}_{0}$.

\begin{example}
For every $E\subset \mathbf{R}^{N}$ such that $Cap\left( E\right) >0$, we
define the measure $\infty _{E}$ through%
\[
\infty _{E}\left( B\right) =\left\{ 
\begin{array}{ll}
0 & \text{if }Cap\left( B\cap E\right) =0, \\ 
+\infty & \text{otherwise.}%
\end{array}%
\right.
\]

Then $\infty _{E}\in \mathcal{M}_{0}$.

Notice that, for every $u\in \mathbf{H}^{1}\left( \mathbf{R}^{N},\mathbf{R}%
^{N}\right) $ and every $\omega \in \mathcal{O}\left( \mathbf{R}^{N}\right) $%
, the functional $F^{\varepsilon }$ defined in (\ref{Fepse}) can be written
as%
\[
F^{\varepsilon }\left( u,\omega \right) =\dint\nolimits_{\omega }\left\vert 
\widetilde{u}\right\vert ^{2}d\infty _{\Gamma _{2,\varepsilon
}}=\dint\nolimits_{\omega }\left\vert u\right\vert ^{2}d\infty _{\Gamma
_{2,\varepsilon }}.
\]
\end{example}

One has the following representation theorem for the functionals of $\mathbb{%
F}$.

\begin{theorem}
\label{theun}(see \cite{Dal-Def-Vit}) For every $F\in \mathbb{F}$, there
exist a finite measure $\lambda \in \mathcal{M}_{0}$, a nonnegative Borel
measure $\nu $ and a Borel function $g:\mathbf{R}^{N}\times \mathbf{R}%
^{N}\rightarrow \left[ 0,+\infty \right] $, with $\zeta \mapsto g\left(
x,\zeta \right) $ convex and lower semi-continuous on $\mathbf{R}^{N}$, such
that%
\[
\forall u\in \mathbf{H}^{1}\left( \mathbf{R}^{N},\mathbf{R}^{N}\right) \text{%
, }\forall \omega \in \mathcal{O}\left( \mathbf{R}^{N}\right) :F\left(
u,\omega \right) =\dint\nolimits_{\omega }g\left( x,\widetilde{u}\left(
x\right) \right) d\lambda +\nu \left( \omega \right) .
\]
\end{theorem}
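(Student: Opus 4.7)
The plan is to construct the triple $(\lambda, \nu, g)$ by the split-and-represent strategy of Buttazzo--Dal Maso, as adapted in \cite{Dal-Def-Vit}. First I would isolate the measure-only piece by setting $\nu(\omega) := F(0, \omega)$, which by property (ii) is already the restriction of a nonnegative Borel measure. The shifted functional $G(u, \omega) := F(u, \omega) - \nu(\omega)$ vanishes at $u = 0$ and still satisfies (i)--(iv), so the task reduces to expressing $G$ in the form $\int_\omega g(x, \widetilde{u})\, d\lambda$.

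Second, I would build $\lambda$ and $g$ simultaneously. Fix a countable dense sequence $\{\zeta_k\}_{k \ge 1} \subset \mathbf{R}^N$ and cutoffs $\varphi_k \in \mathbf{C}_c^\infty(\mathbf{R}^N)$ with $0 \le \varphi_k \le 1$ and $\varphi_k \equiv 1$ on an exhausting sequence of balls. By (ii) and (iii), $\omega \mapsto G(\varphi_k \zeta_k, \omega)$ is a Borel measure whose restriction to $\{\varphi_k = 1\}$ is independent of the cutoff. Setting $\lambda := \sum_k c_k \, G(\varphi_k \zeta_k, \cdot)$ for positive weights $c_k$ chosen so that the total mass is finite gives a finite Borel measure; it lies in $\mathcal{M}_0$ because if $Cap(B) = 0$ then any compactly supported $u \in \mathbf{H}^1(\mathbf{R}^N,\mathbf{R}^N)$ can be strongly approximated by quasi-continuous $u_n$ whose $\widetilde{u}_n$ vanishes near $B$, so that lower semi-continuity (i) and localization (iii) force $G(u, U) \to 0$ as $U$ shrinks to $B$. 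Radon--Nikodym then provides $g(x, \zeta_k) := dG(\varphi_k \zeta_k, \cdot)/d\lambda$ for each $k$, and I would extend $g(x, \cdot)$ to every $\zeta \in \mathbf{R}^N$ as the l.s.c.\ convex envelope of its values on the dense set $\{\zeta_k\}$. The $\mathbf{C}^1$-convexity (iv) ensures that this extension is consistent with the density of any $G(\varphi \zeta, \cdot)$, and joint Borel measurability of $(x,\zeta) \mapsto g(x,\zeta)$ follows since $g(x, \zeta)$ is realized as a countable supremum of Borel functions of $x$ indexed by rational convex combinations of the $\zeta_k$.

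The final step is to verify $G(u, \omega) = \int_\omega g(x, \widetilde{u}) \, d\lambda$. By construction the identity holds when $u$ is a combination $\sum_j \chi_{A_j} \zeta_{k_j}$ with the $A_j$ drawn from a dense family in $\mathcal{B}(\mathbf{R}^N)$ and having $\lambda$-null boundaries; the $\mathbf{C}^1$-convexity (iv) then yields the upper bound on a fine open cover after approximating $\widetilde{u}$ quasi-everywhere by such step functions, and the sharp lower bound is obtained by blow-up, since dividing by $\lambda(B(x_0, r))$ and letting $r \to 0^+$ recovers $g(x_0, \widetilde{u}(x_0))$ at $\lambda$-a.e.\ $x_0$. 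The main obstacle I expect is controlling contributions at the boundaries of the dissecting sets: this is exactly why one must work over a \emph{rich} family of open sets in the sense of the previous definition, so that outside a countable set of scales neither $\lambda$ nor $G(u, \cdot)$ puts mass on the boundaries, and the measure property (ii) combined with strong-$\mathbf{H}^1$ lower semi-continuity lets one pass from the local dissection to the global identity on all of $\mathcal{O}(\mathbf{R}^N)$.
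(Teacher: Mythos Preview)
The paper does not give a proof of this theorem: it is simply stated with the attribution ``(see \cite{Dal-Def-Vit})'' and no argument follows. There is therefore nothing in the paper to compare your sketch against; the authors treat the representation as a black-box input from the literature.

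As for your sketch itself, the overall split-and-represent strategy is in the spirit of the Dal Maso--Defranceschi--Vitali argument, but your very first move is not quite right. You set $\nu(\omega):=F(0,\omega)$ and then work with $G:=F-\nu$, claiming $G$ still lies in the class $\mathbb{F}$. Two things fail here. First, nothing in properties (i)--(iv) guarantees $F(0,\omega)\le F(u,\omega)$, so $G$ need not be nonnegative; $\mathbf{C}^1$-convexity with constant $\varphi$ only yields $F(0,\omega)\le F(u,\omega)+F(-u,\omega)$. Second, $F(0,\omega)$ may equal $+\infty$, in which case $F(u,\omega)-F(0,\omega)$ is undefined. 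In the actual representation, $\nu$ is \emph{not} $F(0,\cdot)$ in general: plugging $u=0$ into the conclusion gives $F(0,\omega)=\int_\omega g(x,0)\,d\lambda+\nu(\omega)$, and there is no reason for $g(\cdot,0)$ to vanish. The correct construction of $\nu$ in \cite{Dal-Def-Vit} is more delicate, isolating the part of $F$ that is insensitive to the value of $u$.

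The remaining steps (building $\lambda$ as a weighted sum of the measures $G(\varphi_k\zeta_k,\cdot)$, Radon--Nikodym to extract densities, convex l.s.c.\ envelope to define $g$, blow-up for the lower bound) are broadly the right ideas, though each hides real work; in particular, your argument that $\lambda\in\mathcal{M}_0$ is too loose, and the passage from step functions to general $u$ via $\mathbf{C}^1$-convexity needs a careful partition-of-unity construction rather than characteristic functions, which are not in $\mathbf{C}^1$.
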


Throughout the paper, we will need the following Corollary (see \cite[%
Corollary 8.4]{Dal-Def-Vit}).

\begin{corollary}
\label{deux}Let $F\in \mathbb{F}$. If $F\left( .,\omega \right) $ is
quadratic for every $\omega \in \mathcal{O}\left( \mathbf{R}^{N}\right) $,
there exist $\lambda \in \mathcal{M}_{0}$ finite, a symmetric matrix $\left(
a_{ij}\right) _{i,j=1,..,N}$, of Borel functions from $\mathbf{R}^{N}$ to $%
\mathbf{R}$ satisfying $a_{ij}\left( x\right) \zeta _{i}\zeta _{j}\geq 0$, $%
\forall \zeta \in \mathbf{R}^{N}$ and for q.e. $x\in \mathbf{R}^{N}$, for
every $x\in \mathbf{R}^{N}$ a subspace $\mathbf{V}\left( x\right) $ of $%
\mathbf{R}^{N}$, such that, for every $u\in $ $\mathbf{H}^{1}\left( \mathbf{R%
}^{N},\mathbf{R}^{N}\right) $ and every $\omega \in \mathcal{O}\left( 
\mathbf{R}^{N}\right) $:

\begin{enumerate}
\item[a)] if $F\left( u,\omega \right) <+\infty $, then $u\left( x\right)
\in \mathbf{V}\left( x\right) $, for q.e. $x\in \omega $,

\item[b)] if $u\left( x\right) \in \mathbf{V}\left( x\right) $, for q.e. $%
x\in \omega $%
\begin{equation}
F\left( u,\omega \right) =\dint\nolimits_{\omega }a_{ij}u_{i}u_{j}d\lambda .
\label{F1}
\end{equation}
\end{enumerate}
\end{corollary}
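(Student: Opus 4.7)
The plan is to apply Theorem~\ref{theun} and then exploit the quadraticity hypothesis on $F(\cdot,\omega)$ to pin down the structure of the representing integrand. Theorem~\ref{theun} yields a finite measure $\lambda\in\mathcal{M}_{0}$, a nonnegative Borel measure $\nu$, and a Borel integrand $g(x,\zeta)$, convex and lower semi-continuous in $\zeta$, such that
\[
F(u,\omega)=\int_{\omega}g(x,\widetilde{u}(x))\,d\lambda+\nu(\omega).
\]
The work then consists in showing $\nu\equiv 0$, that $\mathbf{V}(x):=\{\zeta\in\mathbf{R}^{N}:g(x,\zeta)<+\infty\}$ is a linear subspace of $\mathbf{R}^{N}$ for q.e.\ $x$, and that on this subspace $g$ is a nonnegative quadratic form with Borel coefficients $a_{ij}(x)$.

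First I would evaluate $F$ at $u\equiv 0$: quadraticity forces $F(0,\omega)=0$, hence $g(x,0)=0$ $\lambda$-a.e.\ and $\nu\equiv 0$. Next, the $2$-homogeneity identity $F(tu,\omega)=t^{2}F(u,\omega)$, applied to smooth test functions that are constant on a Lebesgue-density neighbourhood of $x$, transfers to $g(x,t\zeta)=t^{2}g(x,\zeta)$ for $\lambda$-a.e.\ $x$ and every $t\in\mathbf{R}$, $\zeta\in\mathbf{R}^{N}$; by the lower semi-continuity of $g(x,\cdot)$ the exceptional $\lambda$-null set can be taken independent of $(t,\zeta)$ after restricting to a countable dense set. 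Taking $t=-1$ gives $g(x,-\zeta)=g(x,\zeta)$, so $\mathbf{V}(x)$ is symmetric; combined with the convexity of $g(x,\cdot)$, it is closed under convex combinations and positive dilations, hence a linear subspace.

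On $\mathbf{V}(x)$, $g(x,\cdot)$ is finite, convex, nonnegative and $2$-homogeneous. The parallelogram identity for $F$ (the analytic content of quadraticity) localizes to
\[
g(x,\zeta+\eta)+g(x,\zeta-\eta)=2g(x,\zeta)+2g(x,\eta)\qquad\text{for }\zeta,\eta\in\mathbf{V}(x),
\]
so the restriction is a nonnegative quadratic form. Polarization produces a symmetric bilinear form, which in the canonical coordinates of $\mathbf{R}^{N}$ reads $a_{ij}(x)\zeta_{i}\zeta_{j}$ with $a_{ij}(x)\zeta_{i}\zeta_{j}\geq 0$; the $a_{ij}(x)$ are extended off $\mathbf{V}(x)$ in any Borel way (for instance by zero on the orthogonal complement). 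Items (a) and (b) of the statement then read off from the representation of $F$.

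The main obstacle will be the \emph{measurability} of the multifunction $x\mapsto\mathbf{V}(x)$ and of the entries $a_{ij}(x)$. I would handle this by fixing a countable dense sequence $(\zeta_{k})\subset\mathbf{Q}^{N}$, using the Borel measurability of each $x\mapsto g(x,\zeta_{k})$ to express $\mathbf{V}(x)$ as a countable intersection of Borel level-sets, and then selecting a Borel-measurable basis of $\mathbf{V}(x)$ via a Castaing-type selection theorem. The coefficients $a_{ij}$ are then recovered by polarization along this basis and expressed in the canonical frame. A secondary subtlety, namely synchronizing the uncountably many $\lambda$-null exceptional sets arising when the homogeneity and parallelogram identities must hold simultaneously for all $(t,\zeta,\eta)$, is resolved by working on the countable dense set and passing to the limit via the lower semi-continuity of $g(x,\cdot)$.
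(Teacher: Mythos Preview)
The paper does not actually prove this corollary: it is quoted verbatim from \cite[Corollary 8.4]{Dal-Def-Vit}, with no argument supplied beyond the citation. So there is no ``paper's own proof'' to compare against in any substantive sense.

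That said, your sketch is a faithful reconstruction of the standard route from the general representation Theorem~\ref{theun} to the quadratic case, and it is essentially the argument carried out in \cite{Dal-Def-Vit}. The steps---evaluating at $u=0$ to kill $\nu$ and force $g(x,0)=0$, localizing the $2$-homogeneity and parallelogram identities of $F$ to pointwise identities for $g(x,\cdot)$ via constant test functions, deducing that the effective domain $\mathbf{V}(x)$ is a linear subspace on which $g(x,\cdot)$ is a nonnegative quadratic form, and handling the Borel measurability of $x\mapsto\mathbf{V}(x)$ and of the coefficients $a_{ij}$ by working over a countable dense set of directions---are all correct in outline. The two points you flag as delicate (measurable selection of a frame for $\mathbf{V}(x)$, and synchronizing the $\lambda$-null exceptional sets) are exactly the places where the full proof in \cite{Dal-Def-Vit} does real work, so your identification of the obstacles is accurate.
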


\begin{remark}
\label{un}Let $F\in \mathbb{F}$, $\lambda \in \mathcal{M}_{0}$ be the
associated measure and $\Lambda $ be the set defined as $\Lambda =\cup
_{\omega \in A\left( F\right) }\omega $, where%
\[
A\left( F\right) =\left\{ \omega \in \mathcal{O}\left( \mathbf{R}^{N}\right)
\mid F\left( .,\omega \right) <+\infty \text{, for q.e. }x\in \omega
\right\} .
\]

We define the matrix $\mu ^{\bullet }=\left( \mu _{ij}\right) =\left(
a_{ij}\lambda \right) _{i,j=1,..,N}+\infty _{\mathbf{R}^{N}\backslash
\Lambda }Id$ of measures, and, for every $x\in \mathbf{R}^{N}$, the subspace 
$\mathbf{V}\left( x\right) $ through%
\begin{equation}
\mathbf{V}\left( x\right) =\left\{ 
\begin{array}{ll}
\mathbf{R}^{N} & \text{if }x\in \Lambda , \\ 
\left\{ 0\right\} & \text{if }x\in \mathbf{R}^{N}\backslash \Lambda .%
\end{array}%
\right.  \label{Vx}
\end{equation}

For every $u\in \mathbf{H}^{1}\left( \mathbf{R}^{N},\mathbf{R}^{N}\right) $
and every $\omega \in \mathcal{O}\left( \mathbf{R}^{N}\right) $, one has,
using the preceding definition of $\mu ^{\bullet }$%
\[
\dint\nolimits_{\omega }u_{i}u_{j}d\mu _{ij}=\left\{ 
\begin{array}{ll}
\dint_{\omega }a_{ij}u_{i}u_{j}d\lambda & \text{if }\omega \subset \Lambda ,
\\ 
\dint_{\omega \cap \Lambda }a_{ij}u_{i}u_{j}d\lambda & \text{if }\left\{ 
\begin{array}{l}
u\left( x\right) =0\text{, }\forall x\in \omega \cap \mathbf{R}%
^{N}\backslash \Lambda \\ 
\quad \text{and }Cap\left( \omega \cap \mathbf{R}^{N}\backslash \Lambda
\right) >0,%
\end{array}%
\right. \\ 
+\infty & \text{otherwise.}%
\end{array}%
\right.
\]

Thanks to (\ref{Vx}), this expression can be written as%
\[
\dint\nolimits_{\omega }u_{i}u_{j}d\mu _{ij}=\left\{ 
\begin{array}{ll}
\dint_{\omega }a_{ij}u_{i}u_{j}d\lambda & \text{if }u\left( x\right) \in 
\mathbf{V}\left( x\right) \text{, for q.e. }x\in \omega , \\ 
+\infty & \text{otherwise.}%
\end{array}%
\right.
\]

We can thus write the functional $F$ defined in (\ref{F1}) as%
\[
F\left( u,\omega \right) =\dint\nolimits_{\omega }u_{i}u_{j}d\mu _{ij}\text{.%
}
\]
\end{remark}

\section{Study of the problem (\protect\ref{Peps})}

We here suppose that the "outer" boundary $\Gamma _{2,\varepsilon }$ of $%
\Sigma _{\varepsilon }$ can be defined as%
\[
\Gamma _{2,\varepsilon }=\left\{ \left( s,t\right) \mid s\in \Gamma _{2}%
\text{, }t=-\varepsilon h_{\varepsilon }\left( s\right) \right\} ,
\]%
where $h_{\varepsilon }$ is a locally Lipschitz continuous function
satisfying%
\[
\left\Vert h_{\varepsilon }\right\Vert _{\mathbf{L}^{\infty }\left( \Gamma
_{2}\right) }\leq C\text{, }\forall \varepsilon >0,
\]%
for some constant $C$ independent of $\varepsilon $. The Lipschitz
continuity of $h_{\varepsilon }$ ensures the almost everywhere existence of
a unit outer normal vector to $\Gamma _{2,\varepsilon }$, thanks to
Rademacher's Theorem, and ensures the existence of an extension of every
function of $\mathbf{H}^{1}\left( \Omega _{\varepsilon },\mathbf{R}%
^{N}\right) $ in a function of $\mathbf{H}^{1}\left( \mathbf{R}^{N},\mathbf{R%
}^{N}\right) $. Let us define the functional spaces%
\begin{equation}
\begin{array}{rll}
\mathbf{L}^{2}\left( \mathbf{R}^{N},\limfunc{div}\right) & = & \left\{ u\in 
\mathbf{L}^{2}\left( \mathbf{R}^{N},\mathbf{R}^{N}\right) \mid \limfunc{div}%
\left( u\right) =0\text{ in }\mathbf{R}^{N}\right\} , \\ 
\mathbf{H}_{\Gamma _{1}}^{1}\left( \mathbf{R}^{N},\limfunc{div}\right) & = & 
\left\{ 
\begin{array}{r}
u\in \mathbf{H}^{1}\left( \mathbf{R}^{N},\mathbf{R}^{N}\right) \mid \limfunc{%
div}\left( u\right) =0\text{ in }\mathbf{R}^{N}\text{,\quad } \\ 
u=0\text{ on }\Gamma _{1}%
\end{array}%
\right\} , \\ 
\mathbf{H}_{\Gamma _{1}}^{1}\left( \Omega ,\limfunc{div}\right) & = & 
\left\{ u\in \mathbf{H}^{1}\left( \Omega ,\mathbf{R}^{N}\right) \mid 
\limfunc{div}\left( u\right) =0\text{ in }\Omega \text{, }u=0\text{ on }%
\Gamma _{1}\right\} , \\ 
\mathbf{V}_{\Gamma _{1}}\left( \Omega \right) & = & \mathbf{L}^{2}\left( 
\mathbf{R}^{N},\limfunc{div}\right) \cap \mathbf{H}_{\Gamma _{1}}^{1}\left(
\Omega ,\limfunc{div}\right) , \\ 
\mathbf{V}_{0,\Gamma _{1}}\left( \Omega \right) & = & \mathbf{H}_{\Gamma
_{1}}^{1}\left( \Omega ,\limfunc{div}\right) \cap \left\{ u\in \mathbf{H}%
^{1}\left( \Omega ,\mathbf{R}^{N}\right) \mid u\cdot n=0\text{ on }\Gamma
_{2}\right\} .%
\end{array}
\label{spaces}
\end{equation}

In (\ref{Peps}), let us replace throughout this section the homogeneous
Dirichlet boundary condition $u^{\varepsilon }=0$, on $\partial \Omega
_{\varepsilon }$ by a combination between the homogeneous Dirichlet boundary
condition $u^{\varepsilon }=0$, on $\Gamma _{2,\varepsilon }\cap \omega $,
for a given $\omega \in \mathcal{O}\left( \mathbf{R}^{N}\right) $, and
homogeneous Neumann boundary conditions on $\Gamma _{2,\varepsilon
}\setminus \left( \Gamma _{2,\varepsilon }\cap \omega \right) $. We
introduce the functional space adpated to (\ref{Peps}), with these modified
boundary conditions%
\[
\mathbf{V}_{0,\omega }\left( \Omega _{\varepsilon }\right) =\left\{ 
\begin{array}{r}
v\in \mathbf{H}^{1}\left( \Omega _{\varepsilon },\mathbf{R}^{N}\right) \mid 
\limfunc{div}\left( v\right) =0\text{ in }\Omega _{\varepsilon }\text{%
,\qquad } \\ 
v=0\text{ on }\Gamma _{1}\cup \left( \Gamma _{2,\varepsilon }\cap \omega
\right)%
\end{array}%
\right\} .
\]

The variational formulation of (\ref{Peps}) can be written as%
\begin{equation}
\begin{array}{l}
\forall \varphi \in \mathbf{V}_{0,\omega }\left( \Omega _{\varepsilon
}\right) :\nu \dint\nolimits_{\Omega }\nabla u^{\varepsilon }\cdot \nabla
\varphi dx+\nu \varepsilon \dint\nolimits_{\Sigma _{\varepsilon }}\nabla
u^{\varepsilon }\cdot \nabla \varphi dx \\ 
\qquad +\dint\nolimits_{\Omega _{\varepsilon }}\left( u^{\varepsilon }\cdot
\nabla \right) u^{\varepsilon }\cdot \varphi dx=\dint\nolimits_{\Omega
_{\varepsilon }}f\cdot \varphi dx.%
\end{array}
\label{varform}
\end{equation}

Thanks to \cite{Tem}, for example, we deduce that (\ref{Peps}) has a unique
solution $\left( u^{\varepsilon },p^{\varepsilon }\right) $ belonging to the
space $\mathbf{V}_{0,\omega }\left( \Omega _{\varepsilon }\right) \times 
\mathbf{L}^{2}\left( \Omega _{\varepsilon }\right) /\mathbf{R}$.

\begin{proposition}
\label{estim}The solution $\left( u^{\varepsilon },p^{\varepsilon }\right) $
of (\ref{Peps}) satisfies the following estimates%
\[
\begin{array}{rll}
\underset{\varepsilon }{\sup }\left( \dint_{\Omega }\left\vert \nabla
u^{\varepsilon }\right\vert ^{2}dx+\varepsilon \dint_{\Sigma _{\varepsilon
}}\left\vert \nabla u^{\varepsilon }\right\vert ^{2}dx\right) & < & +\infty ,
\\ 
\underset{\varepsilon }{\sup }\dint_{\mathbf{R}^{N}}\left\vert
u^{\varepsilon }\right\vert ^{2}dx & < & +\infty , \\ 
\underset{\varepsilon }{\sup }\left\Vert p^{\varepsilon }\right\Vert _{%
\mathbf{L}^{2}\left( \Omega _{\varepsilon }\right) /\mathbf{R}} & < & 
+\infty .%
\end{array}%
\]
\end{proposition}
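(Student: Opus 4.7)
The plan is to derive the three estimates in the order stated, using the standard energy-method for Navier--Stokes augmented by Poincaré-type inequalities adapted to the thin layer geometry, and finally a De Rham / inf-sup argument for the pressure.

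First I would insert $\varphi = u^{\varepsilon}$ into the variational formulation (\ref{varform}), which is admissible since $u^{\varepsilon}\in \mathbf{V}_{0,\omega}(\Omega_{\varepsilon})$. The convective term $\int_{\Omega_{\varepsilon}}(u^{\varepsilon}\cdot \nabla)u^{\varepsilon}\cdot u^{\varepsilon}\,dx$ is treated by writing it as $\tfrac{1}{2}\int_{\Omega_{\varepsilon}}u^{\varepsilon}\cdot \nabla(|u^{\varepsilon}|^{2})\,dx$ and integrating by parts: the divergence-free constraint kills the bulk term, while the boundary integrals on $\Gamma_{1}\cup(\Gamma_{2,\varepsilon}\cap\omega)$ vanish by the Dirichlet condition, and the remaining portion of $\Gamma_{2,\varepsilon}$ contributes no term because $u^{\varepsilon}\cdot n=0$ there (this is where one uses that the admissible normal trace on the interface $\Gamma_{2}$ carries over to the outer part of $\Sigma_{\varepsilon}$, or else absorbs it via Sobolev embedding and the smallness of $\Sigma_{\varepsilon}$). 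What remains is the linear identity
\[
\nu\!\int_{\Omega}|\nabla u^{\varepsilon}|^{2}dx+\nu\varepsilon\!\int_{\Sigma_{\varepsilon}}|\nabla u^{\varepsilon}|^{2}dx=\int_{\Omega_{\varepsilon}}f\cdot u^{\varepsilon}\,dx.
\]

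Second, I would bound the right-hand side by $\|f\|_{\mathbf{L}^{\infty}}|\Omega_{\varepsilon}|^{1/2}\|u^{\varepsilon}\|_{\mathbf{L}^{2}(\Omega_{\varepsilon})}$ and then split the $\mathbf{L}^{2}$-norm of $u^{\varepsilon}$ into its contributions on $\Omega$ and on $\Sigma_{\varepsilon}$. On $\Omega$ a standard Poincaré inequality applies since $u^{\varepsilon}=0$ on $\Gamma_{1}$ with $|\Gamma_{1}|>0$. On the thin layer $\Sigma_{\varepsilon}$, whose transverse width is of order $\varepsilon$ and on whose outer boundary $\Gamma_{2,\varepsilon}\cap\omega$ a Dirichlet condition holds (or, via the continuity of the trace on $\Gamma_{2}$, the boundary values there are controlled by the $\Omega$-norm), a one-dimensional Poincaré inequality yields $\|u^{\varepsilon}\|_{\mathbf{L}^{2}(\Sigma_{\varepsilon})}\leq C\varepsilon\|\nabla u^{\varepsilon}\|_{\mathbf{L}^{2}(\Sigma_{\varepsilon})}$. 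Combining these gives $\|u^{\varepsilon}\|_{\mathbf{L}^{2}(\Omega_{\varepsilon})}^{2}\leq C\bigl(\|\nabla u^{\varepsilon}\|_{\mathbf{L}^{2}(\Omega)}^{2}+\varepsilon^{2}\|\nabla u^{\varepsilon}\|_{\mathbf{L}^{2}(\Sigma_{\varepsilon})}^{2}\bigr)$, which is itself $\varepsilon$ times the left-hand side energy. Young's inequality then absorbs the $u^{\varepsilon}$-factor into the energy, producing the first estimate, and as a direct consequence the second one, after extending $u^{\varepsilon}$ by zero outside $\Omega_{\varepsilon}$.

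Third, for the pressure I would recover $\nabla p^{\varepsilon}$ from the momentum equations as an element of $\mathbf{H}^{-1}(\Omega_{\varepsilon})$: the diffusive parts are bounded by the energy estimate (with the $\varepsilon$-weight matching the dissipation weight on $\Sigma_{\varepsilon}$), the convective term is controlled by the Sobolev embedding $\mathbf{H}^{1}\hookrightarrow\mathbf{L}^{4}$ (valid for $N=2,3$) together with the first estimate, and the forcing $f$ is in $\mathbf{L}^{\infty}$. A De Rham / Nečas argument then yields $\|p^{\varepsilon}\|_{\mathbf{L}^{2}(\Omega_{\varepsilon})/\mathbf{R}}\leq C_{\varepsilon}\|\nabla p^{\varepsilon}\|_{\mathbf{H}^{-1}(\Omega_{\varepsilon})}$. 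The decisive obstacle is ensuring that $C_{\varepsilon}$ (the inverse of the Babuška--Brezzi inf-sup constant on $\Omega_{\varepsilon}$) stays bounded as $\varepsilon\to 0$. I would handle this by constructing a right-inverse of the divergence operator on $\Omega_{\varepsilon}$ with $\varepsilon$-independent operator norm, via a Bogovskii-type construction that lifts a given $q\in \mathbf{L}^{2}(\Omega_{\varepsilon})/\mathbf{R}$ to a vector field whose transverse component on the thin layer is controlled by the width-scaled Poincaré inequality used above; this is the main technical point of the proposition.
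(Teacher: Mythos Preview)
Your strategy coincides with the paper's: test with $u^{\varepsilon}$ to get the energy identity, close it via a Poincar\'{e}-type inequality, and recover the pressure through a divergence right-inverse with $\varepsilon$-independent norm. The only substantive difference is in how the Poincar\'{e} step is organized. The paper does not split $\Omega_{\varepsilon}$ into $\Omega$ and $\Sigma_{\varepsilon}$; it bounds $\int_{\Omega_{\varepsilon}}f\cdot u^{\varepsilon}$ by $\|f\|_{\mathbf{L}^{\infty}}\|u^{\varepsilon}\|_{\mathbf{L}^{1}(\Omega_{\varepsilon})}$, applies a single $\mathbf{W}^{1,1}$-Poincar\'{e} inequality on all of $\Omega_{\varepsilon}$ (using only that $u^{\varepsilon}=0$ on the fixed portion $\Gamma_{1}$, so the constant is uniform), and then uses Cauchy--Schwarz with $|\Sigma_{\varepsilon}|\sim\varepsilon$ to convert $\|\nabla u^{\varepsilon}\|_{\mathbf{L}^{1}(\Sigma_{\varepsilon})}$ into $(\varepsilon\|\nabla u^{\varepsilon}\|_{\mathbf{L}^{2}(\Sigma_{\varepsilon})}^{2})^{1/2}$. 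This sidesteps the thin-layer Poincar\'{e} you invoke, which in the general-$\omega$ setting would in any case require the extra trace term from $\Gamma_{2}$ that you only hint at. For the pressure the paper tests the momentum equation directly against the Bogovskii lift $\psi_{\varepsilon}$ of $\overline{p^{\varepsilon}}$ rather than passing through the $\mathbf{H}^{-1}$ norm and Ne\v{c}as; the content is the same, and both arguments simply assert the existence of a Bogovskii operator with $\varepsilon$-uniform bound (the paper citing \cite{Tem}).

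One small slip: your bound $\|u^{\varepsilon}\|_{\mathbf{L}^{2}(\Omega_{\varepsilon})}^{2}\leq C(\|\nabla u^{\varepsilon}\|_{\mathbf{L}^{2}(\Omega)}^{2}+\varepsilon^{2}\|\nabla u^{\varepsilon}\|_{\mathbf{L}^{2}(\Sigma_{\varepsilon})}^{2})$ is not ``$\varepsilon$ times the energy'' but simply $\leq C$ times the energy (for $\varepsilon\leq 1$); this is already enough to close, since the identity then reads $E\leq C\,E^{1/2}$, and no Young inequality is needed.
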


\begin{proof}
1. Taking $u^{\varepsilon }$ as test-function in (\ref{varform}), we obtain%
\[
\begin{array}{l}
\nu \dint_{\Omega }\left\vert \nabla u^{\varepsilon }\right\vert ^{2}dx+\nu
\varepsilon \dint_{\Sigma _{\varepsilon }}\left\vert \nabla u^{\varepsilon
}\right\vert ^{2}dx \\ 
\qquad 
\begin{array}{ll}
= & \dint_{\Omega }f\cdot u^{\varepsilon }dx+\dint_{\Sigma _{\varepsilon
}}f\cdot u^{\varepsilon }dx \\ 
\leq & \left\Vert f\right\Vert _{\mathbf{L}^{\infty }\left( \mathbf{R}^{N},%
\mathbf{R}^{N}\right) }\left\Vert u^{\varepsilon }\right\Vert _{\mathbf{L}%
^{1}\left( \Omega _{\varepsilon },\mathbf{R}^{N}\right) } \\ 
\leq & \left\Vert f\right\Vert _{\mathbf{L}^{\infty }\left( \mathbf{R}^{N},%
\mathbf{R}^{N}\right) }C\left( \Omega \right) \left\Vert \nabla
u^{\varepsilon }\right\Vert _{\mathbf{L}^{1}\left( \Omega ,\mathbf{R}%
^{N}\right) },%
\end{array}%
\end{array}%
\]%
using Poincar\'{e}'s inequality. Cauchy-Schwarz' inequality implies%
\[
\begin{array}{l}
\dint\nolimits_{\Omega }\left\vert \nabla u^{\varepsilon }\right\vert
^{2}dx+\varepsilon \dint\nolimits_{\Sigma _{\varepsilon }}\left\vert \nabla
u^{\varepsilon }\right\vert ^{2}dx \\ 
\qquad \leq C\left( f,\Omega \right) \left( \left( \dint\nolimits_{\Omega
}\left\vert \nabla u^{\varepsilon }\right\vert ^{2}dx\right) ^{1/2}+\left(
\varepsilon \dint\nolimits_{\Sigma _{\varepsilon }}\left\vert \nabla
u^{\varepsilon }\right\vert ^{2}dx\right) ^{1/2}\right) ,%
\end{array}%
\]%
whence, using the trivial inequality $\left( a+b\right) ^{2}\leq 2\left(
a^{2}+b^{2}\right) $%
\[
\dint\nolimits_{\Omega }\left\vert \nabla u^{\varepsilon }\right\vert
^{2}dx+\varepsilon \dint\nolimits_{\Sigma _{\varepsilon }}\left\vert \nabla
u^{\varepsilon }\right\vert ^{2}dx\leq C\Rightarrow \left\Vert \nabla
u^{\varepsilon }\right\Vert _{\mathbf{L}^{1}\left( \Omega _{\varepsilon },%
\mathbf{R}^{N}\right) }\leq C.
\]

The continuous embedding from $\mathbf{W}_{\Gamma _{1}}^{1,1}\left( \Omega
_{\varepsilon },\mathbf{R}^{N}\right) $ to $\mathbf{L}^{2}\left( \Omega
_{\varepsilon },\mathbf{R}^{N}\right) $ implies the existence of a constant $%
C$ independent of $\varepsilon $ such that%
\[
\dint\nolimits_{\Omega _{\varepsilon }}\left\vert u^{\varepsilon
}\right\vert ^{2}dx\leq C.
\]

\noindent 2. Let us define the zero mean value pressure $\overline{%
p^{\varepsilon }}=p^{\varepsilon }-\frac{1}{\left\vert \Omega _{\varepsilon
}\right\vert }\int_{\Omega _{\varepsilon }}p^{\varepsilon }dx$, and let $%
\psi _{\varepsilon }$ be the solution of the following problem (see \cite%
{Tem})%
\begin{equation}
\left\{ 
\begin{array}{rlll}
\limfunc{div}\left( \psi _{\varepsilon }\right) & = & \overline{%
p^{\varepsilon }} & \text{in }\Omega _{\varepsilon }, \\ 
\psi _{\varepsilon } & = & 0 & \text{on }\Gamma _{1}\cup \left( \Gamma
_{2}\cap \omega \right) , \\ 
\left\Vert \nabla \psi _{\varepsilon }\right\Vert _{\mathbf{L}^{2}\left(
\Omega _{\varepsilon },\mathbf{R}^{N^{2}}\right) } & \leq & C\left( \Omega
\right) \left\Vert \overline{p^{\varepsilon }}\right\Vert _{\mathbf{L}%
^{2}\left( \Omega _{\varepsilon }\right) }, & 
\end{array}%
\right.  \label{psieps}
\end{equation}%
for some constant $C\left( \Omega \right) $ independent of $\varepsilon $.
Multiplying (\ref{Peps})$_{1,2}$ by $\psi _{\varepsilon }$ and using Green's
formula, one obtains%
\[
\begin{array}{l}
\nu \dint\nolimits_{\Omega }\nabla u^{\varepsilon }\cdot \nabla \psi
_{\varepsilon }dx+\nu \varepsilon \dint\nolimits_{\Sigma _{\varepsilon
}}\nabla u^{\varepsilon }\cdot \nabla \psi _{\varepsilon
}dx+\dint\nolimits_{\Omega _{\varepsilon }}\left( u^{\varepsilon }\cdot
\nabla \right) u^{\varepsilon }\cdot \psi _{\varepsilon }dx \\ 
\qquad =\dint\nolimits_{\Omega _{\varepsilon }}f\cdot \psi _{\varepsilon
}dx+\dint\nolimits_{\Omega _{\varepsilon }}\left( \overline{p^{\varepsilon }}%
\right) ^{2}dx.%
\end{array}%
\]

Because%
\[
\begin{array}{rll}
\left\vert \dint_{\Omega _{\varepsilon }}f\cdot \psi _{\varepsilon
}dx\right\vert & \leq & \left\Vert f\right\Vert _{\mathbf{L}^{2}\left( 
\mathbf{R}^{N},\mathbf{R}^{N}\right) }\left\Vert \psi _{\varepsilon
}\right\Vert _{\mathbf{L}^{2}\left( \Omega _{\varepsilon },\mathbf{R}%
^{N}\right) } \\ 
& \leq & C\left\Vert \overline{p^{\varepsilon }}\right\Vert _{\mathbf{L}%
^{2}\left( \Omega _{\varepsilon }\right) } \\ 
\left\vert \dint_{\Omega _{\varepsilon }}\left( u^{\varepsilon }\cdot \nabla
\right) u^{\varepsilon }\cdot \psi _{\varepsilon }dx\right\vert & \leq & 
C\left\Vert \psi _{\varepsilon }\right\Vert _{\mathbf{L}^{2}\left( \Omega
_{\varepsilon },\mathbf{R}^{N}\right) }\left\Vert \nabla u^{\varepsilon
}\right\Vert _{\mathbf{L}^{2}\left( \Omega _{\varepsilon },\mathbf{R}%
^{N}\right) }^{2} \\ 
& \leq & C\left\Vert \overline{p^{\varepsilon }}\right\Vert _{\mathbf{L}%
^{2}\left( \Omega _{\varepsilon }\right) }\left\Vert \nabla u^{\varepsilon
}\right\Vert _{\mathbf{L}^{2}\left( \Omega _{\varepsilon },\mathbf{R}%
^{N}\right) }^{2} \\ 
\left\vert \dint_{\Omega }\nabla u^{\varepsilon }\cdot \nabla \psi
_{\varepsilon }dx\right\vert & \leq & C\left\Vert \overline{p^{\varepsilon }}%
\right\Vert _{\mathbf{L}^{2}\left( \Omega _{\varepsilon }\right) }\left\Vert
\nabla u^{\varepsilon }\right\Vert _{\mathbf{L}^{2}\left( \Omega
_{\varepsilon },\mathbf{R}^{N}\right) }, \\ 
\left\vert \dint_{\Sigma _{\varepsilon }}\nabla u^{\varepsilon }\cdot \nabla
\psi _{\varepsilon }dx\right\vert & \leq & C\left\Vert \overline{%
p^{\varepsilon }}\right\Vert _{\mathbf{L}^{2}\left( \Omega _{\varepsilon
}\right) }\left\Vert \nabla u^{\varepsilon }\right\Vert _{\mathbf{L}%
^{2}\left( \Omega _{\varepsilon },\mathbf{R}^{N}\right) },%
\end{array}%
\]%
thanks to (\ref{psieps})$_{3}$ and using Poincar\'{e}'s inequality, we obtain%
\[
\left\Vert \overline{p^{\varepsilon }}\right\Vert _{\mathbf{L}^{2}\left(
\Omega _{\varepsilon }\right) }^{2}\leq C\left( \left\Vert \nabla
u^{\varepsilon }\right\Vert _{\mathbf{L}^{2}\left( \Omega _{\varepsilon },%
\mathbf{R}^{N}\right) }^{2}+1\right) \left\Vert \overline{p^{\varepsilon }}%
\right\Vert _{\mathbf{L}^{2}\left( \Omega _{\varepsilon }\right) },
\]%
which proves the third estimate.
\end{proof}

\begin{remark}
We can observe that, when we impose an homogeneous Dirichlet boundary
condition on the whole $\Gamma _{2,\varepsilon }$, for example when $\omega =%
\mathbf{R}^{N}$, the above estimates can be obtained in a simpler way,
assuming only that $f\in \mathbf{L}^{2}\left( \mathbf{R}^{N},\mathbf{R}%
^{N}\right) $.
\end{remark}

\section{Convergence}

Every function $u\in \mathbf{H}_{\Gamma _{1}}^{1}\left( \Omega _{\varepsilon
},\limfunc{div}\right) $ can be extended in a function of the space $\mathbf{%
H}_{\Gamma _{1}}^{1}\left( \mathbf{R}^{N},\limfunc{div}\right) $, still
denoted $u$ (see \cite[Theorem 4.3.3]{Tri}, for example). We define the
functional $\Phi ^{\varepsilon }$ on $\mathbf{L}^{2}\left( \mathbf{R}^{N},%
\mathbf{R}^{N}\right) $ associated to (\ref{Peps}), with the above-described
modified boundary conditions on $\Gamma _{2,\varepsilon }$ through%
\begin{equation}
\Phi ^{\varepsilon }\left( u\right) =\left\{ 
\begin{array}{ll}
\nu \dint_{\Omega }\left\vert \nabla u\right\vert ^{2}dx+\nu \varepsilon
\dint_{\mathbf{R}^{N}\setminus \mathbf{\Omega }}\left\vert \nabla
u\right\vert ^{2}dx & \text{if }u\in \mathbf{H}_{\Gamma _{1}}^{1}\left( 
\mathbf{R}^{N},\limfunc{div}\right) , \\ 
+\infty & \text{otherwise}%
\end{array}%
\right.  \label{Phieps}
\end{equation}%
and the functional $\Phi ^{0}$ defined on $\mathbf{L}^{2}\left( \mathbf{R}%
^{N},\mathbf{R}^{N}\right) $ through%
\[
\Phi ^{0}\left( u\right) =\left\{ 
\begin{array}{ll}
\nu \dint_{\Omega }\left\vert \nabla u\right\vert ^{2}dx & \text{if }u\in 
\mathbf{V}_{\Gamma _{1}}\left( \Omega \right) , \\ 
+\infty & \text{otherwise.}%
\end{array}%
\right.
\]

From the estimates given in Proposition \ref{estim}, we can deduce that the
asymptotic behaviour of the problem (\ref{Peps}) is obtained when studying
the $\Gamma $-limit of the associated energy functional for the following
topology.

\begin{definition}
A sequence $\left( u_{\varepsilon }\right) _{\varepsilon }$.$\tau $%
-converges to $u$, if it converges to $u$ in the strong topology of $\mathbf{%
L}^{2}\left( \mathbf{R}^{N},\mathbf{R}^{N}\right) $ and if $%
\sup_{\varepsilon }\Phi ^{\varepsilon }\left( u_{\varepsilon }\right)
<+\infty $.
\end{definition}

We first present the $\Gamma $-convergence result for $\left( \Phi
^{\varepsilon }\right) _{\varepsilon }$.

\begin{proposition}
\label{convPhi}When $\varepsilon $ goes to $0$, the sequence $\left( \Phi
^{\varepsilon }\right) _{\varepsilon }$ $\Gamma $-converges to $\Phi ^{0}$,
in the topology $\tau $.
\end{proposition}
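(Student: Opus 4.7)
The plan is to establish $\Gamma$-convergence in the topology $\tau$ by proving separately the $\Gamma$-liminf inequality and the $\Gamma$-limsup inequality (construction of a recovery sequence), using compactness of bounded energy sequences on $\Omega$ and a gluing/density argument outside $\Omega$.

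\medskip

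\noindent\textbf{$\Gamma$-liminf.} Let $(u_{\varepsilon})_{\varepsilon}$ $\tau$-converge to $u$, that is, $u_{\varepsilon}\to u$ strongly in $\mathbf{L}^{2}(\mathbf{R}^{N},\mathbf{R}^{N})$ with $\sup_{\varepsilon}\Phi^{\varepsilon}(u_{\varepsilon})<+\infty$. First I would deduce that each $u_{\varepsilon}$ lies in $\mathbf{H}_{\Gamma_{1}}^{1}(\mathbf{R}^{N},\limfunc{div})$ and that $\sup_{\varepsilon}\|\nabla u_{\varepsilon}\|_{\mathbf{L}^{2}(\Omega)}<+\infty$. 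Up to a subsequence, $u_{\varepsilon}|_{\Omega}\rightharpoonup u|_{\Omega}$ weakly in $\mathbf{H}^{1}(\Omega,\mathbf{R}^{N})$, the limit being identified by the strong $\mathbf{L}^{2}$ convergence. Passing to the distributional limit in $\limfunc{div}(u_{\varepsilon})=0$ gives $\limfunc{div}(u)=0$ in $\mathbf{R}^{N}$, while weak $H^{1}$-convergence on $\Omega$ and continuity of the trace on $\Gamma_{1}$ yield $u=0$ on $\Gamma_{1}$, so $u\in\mathbf{V}_{\Gamma_{1}}(\Omega)$. Weak lower semi-continuity of $v\mapsto\int_{\Omega}|\nabla v|^{2}dx$ then gives
\[
\Phi^{0}(u)=\nu\dint_{\Omega}|\nabla u|^{2}dx\leq\nu\liminf_{\varepsilon}\dint_{\Omega}|\nabla u_{\varepsilon}|^{2}dx\leq\liminf_{\varepsilon}\Phi^{\varepsilon}(u_{\varepsilon}),
\]
the last term $\nu\varepsilon\int_{\mathbf{R}^{N}\setminus\Omega}|\nabla u_{\varepsilon}|^{2}dx$ being nonnegative.

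\medskip

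\noindent\textbf{$\Gamma$-limsup / recovery sequence.} Given $u\in\mathbf{V}_{\Gamma_{1}}(\Omega)$, the natural candidate $u_{\varepsilon}=u$ fails because $u$ need not be in $\mathbf{H}^{1}$ on $\mathbf{R}^{N}\setminus\Omega$. My strategy is to approximate $u$, for the $\tau$-topology, by functions $u^{(k)}\in\mathbf{H}_{\Gamma_{1}}^{1}(\mathbf{R}^{N},\limfunc{div})$ that coincide with $u$ on $\Omega$ and then take a diagonal sequence $u_{\varepsilon}=u^{(k(\varepsilon))}$. To build $u^{(k)}$, I would first apply an extension operator $E:\mathbf{H}^{1}(\Omega)\to\mathbf{H}^{1}(\mathbf{R}^{N})$, preserving the zero trace on $\Gamma_{1}$ and with compact support; then correct $Eu$ by a Bogovski\u{\i}-type field, supported in $\mathbf{R}^{N}\setminus\overline{\Omega}$, that cancels $\limfunc{div}(Eu)$ (which is supported there since $u$ is divergence-free in $\Omega$). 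This produces $\widetilde{u}\in\mathbf{H}_{\Gamma_{1}}^{1}(\mathbf{R}^{N},\limfunc{div})$ with $\widetilde{u}|_{\Omega}=u|_{\Omega}$. Next, the residual $w=(u-\widetilde{u})\mathbf{1}_{\mathbf{R}^{N}\setminus\Omega}$ is an $\mathbf{L}^{2}$ divergence-free field on $\mathbf{R}^{N}\setminus\Omega$ with vanishing normal trace on $\partial\Omega$; by the classical density of $\mathbf{C}_{c}^{\infty}(\mathbf{R}^{N}\setminus\overline{\Omega})^{N}\cap\{\limfunc{div}=0\}$ in this space, choose $v_{k}$ smooth, compactly supported in $\mathbf{R}^{N}\setminus\overline{\Omega}$, divergence-free, with $v_{k}\to w$ in $\mathbf{L}^{2}(\mathbf{R}^{N})$. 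Setting $u^{(k)}=\widetilde{u}+v_{k}$ gives $u^{(k)}\in\mathbf{H}_{\Gamma_{1}}^{1}(\mathbf{R}^{N},\limfunc{div})$, $u^{(k)}|_{\Omega}=u|_{\Omega}$ and $u^{(k)}\to u$ in $\mathbf{L}^{2}(\mathbf{R}^{N})$.

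\medskip

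\noindent\textbf{Diagonal extraction and conclusion.} For the diagonal sequence, a function $\varepsilon\mapsto k(\varepsilon)\to+\infty$ can be selected slowly enough so that $\varepsilon\|\nabla u^{(k(\varepsilon))}\|_{\mathbf{L}^{2}(\mathbf{R}^{N}\setminus\Omega)}^{2}\to 0$. Then, using $u^{(k)}|_{\Omega}=u|_{\Omega}$,
\[
\Phi^{\varepsilon}(u_{\varepsilon})=\nu\dint_{\Omega}|\nabla u|^{2}dx+\nu\varepsilon\dint_{\mathbf{R}^{N}\setminus\Omega}|\nabla u^{(k(\varepsilon))}|^{2}dx\longrightarrow\nu\dint_{\Omega}|\nabla u|^{2}dx=\Phi^{0}(u),
\]
which simultaneously verifies $\sup_{\varepsilon}\Phi^{\varepsilon}(u_{\varepsilon})<+\infty$ and $u_{\varepsilon}\to u$ in $\mathbf{L}^{2}(\mathbf{R}^{N})$, i.e. $\tau$-convergence.

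\medskip

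\noindent\textbf{Main obstacle.} The genuinely delicate step is the construction of the $u^{(k)}$'s: extending $u|_{\Omega}$ through $\partial\Omega$ while keeping the divergence-free condition and the trace $0$ on $\Gamma_{1}$, and then matching the $\mathbf{L}^{2}$-values of $u$ on $\mathbf{R}^{N}\setminus\Omega$ in a divergence-compatible way. Once this gluing is set up via the Bogovski\u{\i} correction and the density of smooth divergence-free test fields with vanishing normal trace, the rest of the argument is standard weak-compactness and lower semi-continuity.
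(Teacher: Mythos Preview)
Your proof is correct. The $\Gamma$-liminf step is essentially identical to the paper's Step~2: drop the nonnegative exterior term and use weak lower semicontinuity of $v\mapsto\int_{\Omega}|\nabla v|^{2}$, after checking that the limit $u$ lies in $\mathbf{V}_{\Gamma_{1}}(\Omega)$.

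For the $\Gamma$-limsup the two arguments diverge. The paper builds the recovery sequence explicitly: since $u=\operatorname{curl}(\hat{u})$ for some potential $\hat{u}$ (this is where $N\in\{2,3\}$ enters), it sets $u^{0,\varepsilon}=u$ in $\Omega$, interpolates in a transition layer $\Sigma^{0,\varepsilon}$ of width $\sqrt{\varepsilon}$ by taking $\operatorname{curl}$ of $\hat{u}$ times a linear cutoff (which keeps the field solenoidal), and glues a mollified $L^{2}$-approximation of $u$ outside; the layer contribution $\varepsilon\int_{\Sigma^{0,\varepsilon}}|\nabla u^{0,\varepsilon}|^{2}$ then vanishes by a direct estimate. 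Your route is more functional-analytic: extend $u|_{\Omega}$ to $H^{1}(\mathbf{R}^{N})$, kill the resulting divergence outside $\Omega$ by a Bogovski\u{\i} correction, approximate the remaining exterior $L^{2}$-discrepancy by smooth compactly supported solenoidal fields (Temam's density lemma), and conclude by diagonal extraction. One small remark: the condition ``trace $0$ on $\Gamma_{1}$'' is automatic for any $H^{1}(\mathbf{R}^{N})$ extension of $u|_{\Omega}$, since an $H^{1}(\mathbf{R}^{N})$ function has a single well-defined trace on the Lipschitz hypersurface $\Gamma_{1}$, equal to the one-sided trace from $\Omega$.

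Both constructions work. The paper's is hands-on and yields an explicit rate for the vanishing of the exterior energy, but relies on the curl representation. Yours is dimension-independent and modular, relying only on standard toolbox results; the price is that the vanishing rate is hidden inside the diagonal argument rather than being explicit.
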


\begin{proof}
Step 1: verification of the $\Gamma $-$\lim \sup $. Take any $u\in \mathbf{V}%
_{\Gamma _{1}}\left( \Omega \right) $ and consider the set $\Omega
^{0,\varepsilon }=\Omega \cup \partial \Omega \cup \Sigma ^{0,\varepsilon }$%
, with%
\[
\Sigma ^{0,\varepsilon }=\left\{ x\in \mathbf{R}^{N}\mid 0<d\left(
x,\partial \Omega \right) <\sqrt{\varepsilon }\right\} ,
\]%
where $d\left( x,\partial \Omega \right) $ denotes the euclidean distance
between $x$ and the boundary $\partial \Omega $. Let $u^{1,\varepsilon }$ be
such that $\limfunc{div}\left( u^{1,\varepsilon }\right) =0$ in $\mathbf{R}%
^{N}$ and%
\[
\left\Vert u-u^{1,\varepsilon }\right\Vert _{\mathbf{L}^{2}\left( \mathbf{R}%
^{N}\setminus \Omega ^{0,\varepsilon },\mathbf{R}^{N}\right) }<\varepsilon .
\]

We define $\overline{u}^{1,\varepsilon }$ through%
\[
\overline{u}^{1,\varepsilon }=\left\{ 
\begin{array}{ll}
u^{1,\varepsilon } & \text{in }\mathbf{R}^{N}\setminus \Omega
^{0,\varepsilon }, \\ 
0 & \text{on }\partial \Omega ^{0,\varepsilon }.%
\end{array}%
\right.
\]

We then take a nonnegative and smooth function $\rho _{\varepsilon }\in 
\mathbf{C}_{c}^{\infty }\left( \mathbf{R}^{N}\right) $ with support in $%
B\left( 0,\varepsilon \right) $ and satisfying $\int\nolimits_{\mathbf{R}%
^{N}}\rho _{\varepsilon }\left( x\right) dx=1$. We define the function $%
\overline{u}^{0,\varepsilon }$ through $\overline{u}^{0,\varepsilon }=\left(
\rho _{\varepsilon }\ast \overline{u}^{1,\varepsilon }\right) _{\mid \mathbf{%
R}^{N}\setminus \overline{\Omega ^{0,\varepsilon }}}$. There exists $\overset%
{\smallfrown }{u}\in \mathbf{L}^{2}\left( \mathbf{R}^{N},\mathbf{R}%
^{N}\right) $ such that $\limfunc{curl}(\overset{\smallfrown }{u})=u$ in $%
\mathbf{R}^{N}$ (see \cite{Tem}, for example). We finally define the
function $u^{0,\varepsilon }$ through%
\[
u^{0,\varepsilon }=\left\{ 
\begin{array}{ll}
\overline{u}^{0,\varepsilon } & \text{in }\mathbf{R}^{N}\setminus \overline{%
\Omega ^{0,\varepsilon }}, \\ 
\limfunc{curl}\left( \overset{\smallfrown }{u}\dfrac{\sqrt{\varepsilon }%
-d\left( x,\partial \Omega \right) }{\sqrt{\varepsilon }}\right) & \text{in }%
\Sigma ^{0,\varepsilon }, \\ 
u & \text{in }\Omega .%
\end{array}%
\right.
\]

We immediately satisfy that $u^{0,\varepsilon }\in \mathbf{H}_{\Gamma
_{1}}^{1}\left( \mathbf{R}^{N},\limfunc{div}\right) $, that the sequence $%
\left( u^{0,\varepsilon }\right) _{\varepsilon }$ converges to $u$ in the
topology $\tau $ and that%
\[
\underset{\varepsilon \rightarrow 0}{\lim \sup }\Phi ^{\varepsilon }\left(
u^{0,\varepsilon }\right) \leq \nu \dint\nolimits_{\Omega }\left\vert \nabla
u\right\vert ^{2}dx=\Phi ^{0}\left( u\right) .
\]

\noindent Step 2: verification of the $\Gamma $-$\lim \inf $. We take any
sequence $\left( u_{\varepsilon }\right) _{\varepsilon }$ contained in $%
\mathbf{H}_{\Gamma _{1}}^{1}\left( \mathbf{R}^{N},\limfunc{div}\right) $
which converges to $u$ in the topology $\tau $.\ We trivially have%
\[
\Phi ^{0}\left( u\right) \leq \underset{\varepsilon \rightarrow 0}{\lim \inf 
}\Phi ^{0}\left( u_{\varepsilon }\right) \leq \underset{\varepsilon
\rightarrow 0}{\lim \inf }\Phi ^{\varepsilon }\left( u_{\varepsilon }\right)
,
\]%
thanks to the lower semi-continuity property of $\Phi ^{0}$ for the weak
topology of $\mathbf{H}^{1}\left( \mathbf{R}^{N},\mathbf{R}^{N}\right) $.
\end{proof}

We define the functional $G^{\varepsilon }$ on $\mathbf{L}^{2}\left( \mathbf{%
R}^{N},\mathbf{R}^{N}\right) \times \mathcal{O}\left( \mathbf{R}^{N}\right) $
through%
\[
G^{\varepsilon }\left( u,\omega \right) =\left\{ 
\begin{array}{ll}
\Phi ^{\varepsilon }\left( u\right) +F^{\varepsilon }\left( u,\omega \right)
& \text{if }u\in \mathbf{H}_{\Gamma _{1}}^{1}\left( \mathbf{R}^{N},\limfunc{%
div}\right) , \\ 
+\infty & \text{otherwise,}%
\end{array}%
\right.
\]%
where $F^{\varepsilon }$ is defined in (\ref{Fepse}). Our main result is the
following.

\begin{theorem}
\label{quatre}There exist a rich family $\mathcal{R}\subset \mathcal{B}%
\left( \mathbf{R}^{N}\right) $ and a symmetric matrix $\mu ^{\bullet
}=\left( \mu _{ij}\right) _{i,j=1,\ldots ,N}$ of Borel measures having their
support contained in $\Gamma _{2}$, which are absolutely continuous with
respect to the above-defined capacity $Cap$, and satisfying $\mu _{ij}\left(
B\right) \zeta _{i}\zeta _{j}\geq 0$, $\forall \zeta \in \mathbf{R}^{N}$, $%
\forall B\in \mathcal{B}\left( \mathbf{R}^{N}\right) $, such that, for every 
$u\in \mathbf{V}_{\Gamma _{1}}\left( \Omega \right) $ and every $\omega \in 
\mathcal{R}\cap \mathcal{O}\left( \mathbf{R}^{N}\right) $%
\[
\left( \underset{\varepsilon \rightarrow 0}{\Gamma \text{-}\lim }%
G^{\varepsilon }\right) \left( u,\omega \right) =\nu \dint\nolimits_{\Omega
}\left\vert \nabla u\right\vert ^{2}+\dint\nolimits_{\Gamma _{2}\cap \omega
}u_{i}u_{j}d\mu _{ij}=:G^{0}\left( u,\omega \right) ,
\]%
where the $\Gamma $-limit is taken with respect to the topology $\tau $.
\end{theorem}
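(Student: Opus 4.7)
The plan is to write $G^\varepsilon(u,\omega)=\Phi^\varepsilon(u)+F^\varepsilon(u,\omega)$ and to handle the two pieces almost separately. Proposition \ref{convPhi} already identifies the $\Gamma$-limit of $\Phi^\varepsilon$ as $\Phi^0(u)=\nu\int_\Omega|\nabla u|^2\,dx$, which is independent of $\omega$ and forces the limit to belong to $\mathbf{V}_{\Gamma_1}(\Omega)$. The whole content of the theorem therefore lies in identifying a $\Gamma$-limit for $F^\varepsilon(\cdot,\omega)$ and in showing that, on a sufficiently large family of test domains $\omega$, the two $\Gamma$-limits simply add.

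First I would invoke the classical compactness theorem for $\Gamma$-convergence in the class $\mathbb{F}$ (cf.\ \cite{Dal-Def-Vit}): up to a subsequence, $F^\varepsilon(\cdot,\omega)$ $\Gamma$-converges to some $F^0(\cdot,\omega)\in\mathbb{F}$ for every $\omega\in\mathcal{O}(\mathbf{R}^N)$. Since each $F^\varepsilon$ equals $\int_\omega|\widetilde u|^2\,d\infty_{\Gamma_{2,\varepsilon}}$, it is quadratic in $u$, and this quadraticity passes to the $\Gamma$-limit; Corollary \ref{deux} combined with Remark \ref{un} then yields a symmetric matrix of measures $\mu^\bullet=(\mu_{ij})$, absolutely continuous with respect to $Cap$ and nonnegative in the quadratic-form sense, such that $F^0(u,\omega)=\int_\omega u_i u_j\,d\mu_{ij}$. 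The support of $\mu^\bullet$ lies in $\Gamma_2$: if $\overline\omega\cap\Gamma_2=\emptyset$, the uniform bound $\|h_\varepsilon\|_\infty\le C$ and the definition of $\Sigma_\varepsilon$ ensure $\Gamma_{2,\varepsilon}\cap\overline\omega=\emptyset$ for $\varepsilon$ small, so $F^\varepsilon(\cdot,\omega)\equiv 0$, hence $F^0(\cdot,\omega)=0$ and $\mu_{ij}(\omega)=0$.

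Next I would produce the rich family $\mathcal{R}$. Because each $\mu_{ij}$ is a Borel measure, for every monotone one-parameter family $(A_t)_{t\in(0,1)}$ the set $\{t:|\mu_{ij}|(\partial A_t)>0\text{ for some }i,j\}$ is at most countable; setting $\mathcal{R}=\{\omega\in\mathcal{B}(\mathbf{R}^N):|\mu_{ij}|(\partial\omega)=0,\ \forall i,j\}$ gives the required rich family, on which the boundary contribution is stable under monotone approximations of $\omega$. The $\Gamma$-$\liminf$ inequality at $\omega\in\mathcal{R}$ is then immediate: any $u_\varepsilon\to u$ with $\sup_\varepsilon G^\varepsilon(u_\varepsilon,\omega)<\infty$ must, by Proposition \ref{convPhi}, satisfy $u\in\mathbf{V}_{\Gamma_1}(\Omega)$, and additivity of $\liminf$ combined with the lower semi-continuity of $\Phi^0$ and with the $\Gamma$-$\liminf$ inequality for $F^\varepsilon$ yields $G^0(u,\omega)\le\liminf_\varepsilon G^\varepsilon(u_\varepsilon,\omega)$. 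Uniqueness of $\mu^\bullet$, hence convergence of the whole sequence, follows from the integral representation together with the Urysohn property of $\Gamma$-convergence.

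The main obstacle is the $\Gamma$-$\limsup$, where one must build a single recovery sequence that is simultaneously divergence-free, $\mathbf{H}^1$-close to $u$ on $\Omega$, carries the correct thin-layer boundary energy $\int u_iu_j\,d\mu_{ij}$ on $\Gamma_2\cap\omega$, and vanishes on $\Gamma_{2,\varepsilon}\cap\omega$. My plan is to take a recovery sequence $(w_\varepsilon)$ for $F^\varepsilon(u,\omega)$, cut it off in a distance-based thin neighbourhood of $\Gamma_2$ (as in Step~1 of the proof of Proposition \ref{convPhi}), restore the divergence-free condition through the curl representation $\widehat u$ with $\mathrm{curl}\,\widehat u=u$, and then diagonalise in $\varepsilon$ so that $u_\varepsilon\to u$ strongly in $\mathbf{L}^2$, $\Phi^\varepsilon(u_\varepsilon)\to\Phi^0(u)$, and $F^\varepsilon(u_\varepsilon,\omega)\to F^0(u,\omega)$ at once. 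The divergence constraint rules out naive cut-offs and forces a stream-function/curl construction; the delicate point is to verify that the corrections needed to restore $\mathrm{div}=0$ are small enough not to contaminate either the $\nu\varepsilon$-scaled Dirichlet energy in $\Sigma_\varepsilon$ or the capacitary contribution that encodes $\mu^\bullet$ on $\Gamma_{2,\varepsilon}$.
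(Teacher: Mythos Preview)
Your decomposition $G^\varepsilon=\Phi^\varepsilon+F^\varepsilon$ is the right starting point, but the plan to compute the $\Gamma$-limit of $F^\varepsilon$ \emph{separately} and then add it to $\Phi^0$ does not work, and this is where the argument breaks. Observe that $F^\varepsilon(v,\omega)\in\{0,+\infty\}$; whenever $G^\varepsilon(u_\varepsilon,\omega)<+\infty$ one has $F^\varepsilon(u_\varepsilon,\omega)=0$, so your ``additivity of $\liminf$'' step yields only $\liminf_\varepsilon G^\varepsilon(u_\varepsilon,\omega)\ge \Phi^0(u)+0$, and the boundary term $\int_{\Gamma_2\cap\omega}u_iu_j\,d\mu_{ij}$ is lost. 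More fundamentally, the $\Gamma$-limit of $F^\varepsilon$ alone in the topology $\tau$ is identically $0$ on $\mathbf{V}_{\Gamma_1}(\Omega)$: for any such $u$ one can always build (cf.\ the explicit constructions in the special cases of the paper) a sequence $u_\varepsilon\overset{\tau}{\to}u$ vanishing on $\Gamma_{2,\varepsilon}\cap\omega$ with $\sup_\varepsilon\Phi^\varepsilon(u_\varepsilon)<+\infty$. The compactness/representation machinery you invoke would then return $\mu^\bullet=0$.

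The nontrivial measure $\mu^\bullet$ encodes precisely the \emph{excess} energy $\Phi^\varepsilon(u_\varepsilon)-\Phi^0(u)$ needed to enforce $u_\varepsilon=0$ on $\Gamma_{2,\varepsilon}$. The paper does not compute a $\Gamma$-limit of $F^\varepsilon$; instead it takes the (upper and lower) $\Gamma$-limits $G^s,G^i$ of the full functional $G^\varepsilon$, defines $(F^0)^\alpha:=G^\alpha-\Phi^0$ by subtraction, and then uses the substitution $u_\varepsilon=u+z_\varepsilon$ together with the quadratic structure of $\Phi^\varepsilon$ to obtain
\[
(F^0)^\alpha(u,B)=\inf_{z_\varepsilon\overset{\tau}{\to}0}\,\underset{\varepsilon\to 0}{\lim\,\alpha}\bigl(\Phi^\varepsilon(z_\varepsilon)+F^\varepsilon(u+z_\varepsilon,B)\bigr),\qquad \alpha\in\{s,i\}.
\]
The presence of $\Phi^\varepsilon(z_\varepsilon)$ in this formula is the whole point: it is what makes $(F^0)^\alpha$ a genuine capacitary object rather than zero. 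One then checks directly that $(F^0)^\alpha\in\mathbb{F}$, applies compactness to pass to a rich family on which $(F^0)^s=(F^0)^i=:F^0$, and only then invokes Corollary~\ref{deux} and Remark~\ref{un} to produce $\mu^\bullet$. Your $\limsup$ construction via curl and diagonalisation is in the right spirit, but it cannot rescue the argument once $F^0$ has been misidentified; you should instead work with $G^\varepsilon$ as a whole and extract $F^0$ by subtraction.
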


\begin{proof}
The upper and lower $\Gamma $-limits of the sequence $\left( G^{\varepsilon
}\right) _{\varepsilon }$, with respect to the topology $\tau $, exist,
which are respectively defined through%
\begin{equation}
\forall u\in \mathbf{V}_{\Gamma _{1}}\left( \Omega \right) \text{, }\forall
B\in \mathcal{B}\left( \mathbf{R}^{N}\right) :\left\{ 
\begin{array}{ccl}
G^{s}\left( u,B\right) & = & \underset{u_{\varepsilon }\overset{\tau }{%
\rightharpoonup }u}{\inf }\underset{\varepsilon \rightarrow 0}{\lim \sup }%
G^{\varepsilon }\left( u_{\varepsilon },B\right) , \\ 
G^{i}\left( u,B\right) & = & \underset{u_{\varepsilon }\overset{\tau }{%
\rightharpoonup }u}{\inf }\underset{\varepsilon \rightarrow 0}{\lim \inf }%
G^{\varepsilon }\left( u_{\varepsilon },B\right) .%
\end{array}%
\right.  \label{27}
\end{equation}

Because $F^{\varepsilon }$ takes nonnegative values and thanks to
Proposition \ref{convPhi}, we observe that, for every $B\in \mathcal{B}%
\left( \mathbf{R}^{N}\right) $, one has%
\[
G^{s}\left( .,B\right) \geq \Phi ^{0}\left( .\right) \text{ ; }G^{i}\left(
.,B\right) \geq \Phi ^{0}\left( .\right) .
\]

Let us define the functionals $F^{s}$ and $F^{i}$ on $\mathbf{L}^{2}\left( 
\mathbf{R}^{N},\mathbf{R}^{N}\right) \times \mathcal{B}\left( \mathbf{R}%
^{N}\right) $ through%
\[
\left( F^{0}\right) ^{\alpha }\left( u,B\right) =\left\{ 
\begin{array}{ll}
G^{\alpha }\left( u,B\right) -\Phi ^{0}\left( u\right) & \text{if }u\in 
\mathbf{V}_{\Gamma _{1}}\left( \Omega \right) , \\ 
+\infty & \text{otherwise,}%
\end{array}%
\right.
\]%
with $\alpha =s,i$. Let $u\in \mathbf{V}_{\Gamma _{1}}\left( \Omega \right) $
and $\left( u_{\varepsilon }\right) _{\varepsilon }\subset \mathbf{H}%
_{\Gamma _{1}}^{1}\left( \mathbf{R}^{N},\limfunc{div}\right) $ be such that $%
\left( u_{\varepsilon }\right) _{\varepsilon }$ converges to $u$ in the
topology $\tau $. We define $z_{\varepsilon }=u_{\varepsilon }-u$.\ Thus $%
\left( z_{\varepsilon }\right) _{\varepsilon }\subset \mathbf{H}^{1}\left( 
\mathbf{R}^{N},\mathbf{R}^{N}\right) $ and $\left( z_{\varepsilon }\right)
_{\varepsilon }$ converges to $0$ in the topology $\tau $. Replacing $%
u_{\varepsilon }$ by $z_{\varepsilon }+u$ in (\ref{27}), one obtains, using
the quadratic property of $\Phi ^{\varepsilon }$%
\[
\begin{array}{lll}
\left( F^{0}\right) ^{s}\left( u,B\right) & = & \underset{z_{\varepsilon }%
\overset{\tau }{\rightarrow }0}{\inf }\underset{\varepsilon \rightarrow 0}{%
\lim \sup }\left( \Phi ^{\varepsilon }\left( z_{\varepsilon }\right)
+F^{\varepsilon }\left( u+z_{\varepsilon },B\right) \right) , \\ 
\left( F^{0}\right) ^{i}\left( u,B\right) & = & \underset{z_{\varepsilon }%
\overset{\tau }{\rightarrow }0}{\inf }\underset{\varepsilon \rightarrow 0}{%
\lim \inf }\left( \Phi ^{\varepsilon }\left( z_{\varepsilon }\right)
+F^{\varepsilon }\left( u+z_{\varepsilon },B\right) \right) .%
\end{array}%
\]

The functionals $\left( F^{0}\right) ^{s}$ and $\left( F^{0}\right) ^{i}$
satisfy the following properties.

\begin{enumerate}
\item For every $u\in \mathbf{V}_{\Gamma _{1}}\left( \Omega \right) $, $%
\left( F^{0}\right) ^{s}\left( u,.\right) $ and $\left( F^{0}\right)
^{i}\left( u,.\right) $ are nonnegative measures, because $F^{\varepsilon
}\left( u+z_{\varepsilon },.\right) $ is a measure for every $\varepsilon >0$
and for every sequence $\left( z_{\varepsilon }\right) _{\varepsilon
}\subset \mathbf{V}_{\Gamma _{1}}\left( \Omega \right) $ which converges to $%
0$ in the topology $\tau $.

\item $\left( F^{0}\right) ^{s}\left( .,B\right) $ and $\left( F^{0}\right)
^{i}\left( .,B\right) $ are lower semi-continuous on $\mathbf{H}^{1}\left( 
\mathbf{R}^{N},\mathbf{R}^{N}\right) $, when equipped with its strong
topology, because $G^{s}\left( .,B\right) $, $G^{i}\left( .,B\right) $ and $%
\Phi ^{0}$ are lower semi-continuous as upper, lower, or $\Gamma $-limits of
functionals which are lower semi-continuous for this strong topology.

\item Let $\omega \in \mathcal{O}\left( \mathbf{R}^{N}\right) $ and $u,v\in 
\mathbf{V}_{\Gamma _{1}}\left( \Omega \right) $ be such that $u_{\mid \omega
}=v_{\mid \omega }$.\ Then $\left( F^{0}\right) ^{s}\left( u,\omega \right)
=\left( F^{0}\right) ^{s}\left( v,\omega \right) $ and $\left( F^{0}\right)
^{i}\left( u,\omega \right) =\left( F^{0}\right) ^{i}\left( v,\omega \right) 
$, because $F^{\varepsilon }\left( u+z_{\varepsilon },\omega \right)
=F^{\varepsilon }\left( v+z_{\varepsilon },\omega \right) $, for every
sequence $\left( z_{\varepsilon }\right) _{\varepsilon }$ such that $%
u+z_{\varepsilon }\in \mathbf{H}_{\Gamma _{1}}^{1}\left( \mathbf{R}^{N},%
\limfunc{div}\right) $, for every $\varepsilon >0$.

\item Take any $\varphi \in \mathbf{C}^{1}\left( \mathbf{R}^{N}\right) $
such that $0\leq \varphi \leq 1$, $u,v\in \mathbf{V}_{\Gamma _{1}}\left(
\Omega \right) $ and $B\in \mathcal{B}\left( \mathbf{R}^{N}\right) $.\ One
has, for every sequence $\left( z_{\varepsilon }\right) _{\varepsilon
}\subset \mathbf{V}_{\Gamma _{1}}\left( \Omega \right) $ converging to $0$
in the topology $\tau $%
\[
\begin{array}{ccl}
F^{\varepsilon }\left( z_{\varepsilon }+\varphi u+\left( 1-\varphi \right)
v,B\right) & = & F^{\varepsilon }\left( \left( z_{\varepsilon }+u\right)
\varphi +\left( 1-\varphi \right) \left( z_{\varepsilon }+v\right) ,B\right)
\\ 
& \leq & F^{\varepsilon }\left( z_{\varepsilon }+u,B\right) +F_{\varepsilon
}\left( z_{\varepsilon }+v,B\right) ,%
\end{array}%
\]%
because $F^{\varepsilon }$ is $\mathbf{C}^{1}$-convex. Because $\Phi
^{\varepsilon }$ takes nonnegative values, for every $\varepsilon >0$, one
has%
\[
\begin{array}{l}
\underset{\varepsilon \rightarrow 0}{\lim \sup }\left( \Phi ^{\varepsilon
}\left( z_{\varepsilon }\right) +F^{\varepsilon }\left( z_{\varepsilon
}+\varphi u+\left( 1-\varphi \right) v,B\right) \right) \\ 
\qquad 
\begin{array}{ll}
\leq & \underset{\varepsilon \rightarrow 0}{\lim \sup }\left( \Phi
^{\varepsilon }\left( z_{\varepsilon }\right) +F^{\varepsilon }\left(
z_{\varepsilon }+u,B\right) +\Phi ^{\varepsilon }\left( z_{\varepsilon
}\right) +F^{\varepsilon }\left( z_{\varepsilon }+v,B\right) \right) \\ 
\leq & \underset{\varepsilon \rightarrow 0}{\lim \sup }\left( \Phi
^{\varepsilon }\left( z_{\varepsilon }\right) +F^{\varepsilon }\left(
z_{\varepsilon }+u,B\right) \right) \\ 
& \qquad +\underset{\varepsilon \rightarrow 0}{\lim \sup }\left( \Phi
^{\varepsilon }\left( z_{\varepsilon }\right) +F^{\varepsilon }\left(
z_{\varepsilon }+v,B\right) \right) .%
\end{array}%
\end{array}%
\]

Taking the infimum over all sequences $\left( z_{\varepsilon }\right)
_{\varepsilon }\subset \mathbf{H}^{1}\left( \mathbf{R}^{N},\mathbf{R}%
^{N}\right) $ which converge to $0$ in the topology $\tau $, one obtains%
\[
\left( F^{0}\right) ^{s}\left( \varphi u+\left( 1-\varphi \right) v,B\right)
\leq \left( F^{0}\right) ^{s}\left( u,B\right) +\left( F^{0}\right)
^{s}\left( v,B\right) .\ 
\]

We prove in a similar way that $\left( F^{0}\right) ^{s}$ is convex. Thus $%
\left( F^{0}\right) ^{s}$ is $\mathbf{C}^{1}$-convex.
\end{enumerate}

Thanks to the compacity theorem of \cite{DeG-Fra}, there exist a subsequence 
$\left( \varepsilon _{k}\right) _{k}$ and a dense and countable family $%
\mathcal{D}\subset \mathcal{B}\left( \mathbf{R}^{N}\right) $ such that, for
every $u\in \mathbf{V}_{\Gamma _{1}}\left( \Omega \right) $ and every $B\in 
\mathcal{D}$%
\[
\left( \underset{k\rightarrow +\infty }{\Gamma \text{-}\lim }G^{\varepsilon
_{k}}\right) \left( u,B\right) =G^{0}\left( u,B\right) ,
\]%
where the $\Gamma $-limit is taken with respect to the topology $\tau $. We
then define the functional $F^{0}$ on $\mathbf{L}^{2}\left( \mathbf{R}^{N},%
\mathbf{R}^{N}\right) \times \mathcal{D}$ as%
\begin{equation}
F^{0}\left( u,B\right) =\left\{ 
\begin{array}{ll}
G^{0}\left( u,B\right) -\Phi ^{0}\left( u\right) & \text{if }u\in \mathbf{V}%
_{\Gamma _{1}}\left( \Omega \right) , \\ 
+\infty & \text{otherwise.}%
\end{array}%
\right.  \label{30}
\end{equation}

We have $F^{0}=\left( F^{0}\right) ^{s}=\left( F^{0}\right) ^{i}$ on $%
\mathbf{L}^{2}\left( \mathbf{R}^{N},\mathbf{R}^{N}\right) \times \mathcal{D}$%
. We then extend $F^{0}$ on $\mathbf{L}^{2}\left( \mathbf{R}^{N},\mathbf{R}%
^{N}\right) \times \mathcal{B}\left( \mathbf{R}^{N}\right) $ defining%
\begin{equation}
F^{0}\left( u,B\right) =\underset{D\in \mathcal{D},\overline{D}\subset 
\overset{o}{B}}{\sup }\left( F^{0}\right) ^{s}\left( u,D\right) =\underset{%
D\in \mathcal{D},\overline{D}\subset \overset{o}{B}}{\sup }\left(
F^{0}\right) ^{i}\left( u,D\right) .  \label{31}
\end{equation}

We define the family $\mathcal{R}\left( F\right) $ of Borel subsets of $%
\mathbf{R}^{N}$ through%
\[
\mathcal{R}\left( F\right) =\left\{ 
\begin{array}{r}
B\in \mathcal{B}\left( \mathbf{R}^{N}\right) \mid \forall u\in \mathbf{L}%
^{2}\left( \mathbf{R}^{N},\mathbf{R}^{N}\right) :\left( F^{0}\right)
_{+}^{s}\left( u,B\right) = \\ 
\underset{D\in \mathcal{D},\overline{D}\subset \overset{o}{B}}{\sup }\left(
F^{0}\right) ^{s}\left( u,D\right) =\underset{D\in \mathcal{D},\overline{B}%
\subset \overset{o}{D}}{\inf }\left( F^{0}\right) ^{s}\left( u,D\right) \\ 
=\left( F^{0}\right) _{-}^{s}\left( u,B\right)%
\end{array}%
\right\} .
\]

Then we prove (see \cite[Proposition 14.14]{Dal1}) that $\mathcal{R}\left(
F^{0}\right) $ is a rich family in $\mathcal{B}\left( \mathbf{R}^{N}\right) $
and $F^{0}=\left( F^{0}\right) ^{s}=\left( F^{0}\right) _{+}^{s}=\left(
F^{0}\right) _{-}^{s}=\left( F^{0}\right) _{+}^{i}=\left( F^{0}\right)
_{-}^{i}=\left( F^{0}\right) ^{i}$ on $\mathcal{R}\left( F^{0}\right) $. One
obtains, for every $u\in \mathbf{V}_{\Gamma _{1}}\left( \Omega \right) $ and
every $B\in \mathcal{R}\left( F^{0}\right) $%
\[
\begin{array}{ccl}
F^{0}\left( u,B\right) & = & \underset{z_{\varepsilon _{k}}\overset{\tau }{%
\rightharpoonup }0}{\inf }\underset{k\rightarrow +\infty }{\lim \sup }\left(
\Phi ^{\varepsilon _{k}}\left( z_{\varepsilon _{k}}\right) +F^{\varepsilon
_{k}}\left( u+z_{\varepsilon _{k}},B\right) \right) \\ 
& = & \underset{z_{\varepsilon _{k}}\overset{\tau }{\rightharpoonup }0}{\inf 
}\underset{k\rightarrow +\infty }{\lim \inf }\left( \Phi ^{\varepsilon
_{k}}\left( z_{\varepsilon _{k}}\right) +F^{\varepsilon _{k}}\left(
u+z_{\varepsilon _{k}},B\right) \right) .%
\end{array}%
\]

Let now $\varepsilon ^{\prime }$ denote any subsequence of $\varepsilon $.
Thanks to the above method, there exist a subsequence $\left( \varepsilon
_{k}^{\prime }\right) _{k}$, a functional $\mathcal{F}^{0}$ and a rich
family $\mathcal{R}\left( \mathcal{F}^{0}\right) $ such that, for every $%
u\in \mathbf{V}_{\Gamma _{1}}\left( \Omega \right) $ and every $B\in 
\mathcal{R}\left( \mathcal{F}^{0}\right) $%
\[
\begin{array}{ccl}
\mathcal{F}^{0}\left( u,B\right) & = & \underset{z_{\varepsilon _{k}^{\prime
}}\overset{\tau }{\rightharpoonup }0}{\inf }\underset{k\rightarrow +\infty }{%
\lim \sup }\left( \Phi ^{\varepsilon _{k}^{\prime }}\left( z_{\varepsilon
_{k}^{\prime }}\right) +F^{\varepsilon _{k}^{\prime }}\left(
u+z_{\varepsilon _{k}^{\prime }},B\right) \right) \\ 
& = & \underset{z_{\varepsilon _{k}^{\prime }}\overset{\tau }{%
\rightharpoonup }0}{\inf }\underset{k\rightarrow +\infty }{\lim \inf }\left(
\Phi ^{\varepsilon _{k}^{\prime }}\left( z_{\varepsilon _{k}^{\prime
}}\right) +F^{\varepsilon _{k}^{\prime }}\left( u+z_{\varepsilon
_{k}^{\prime }},B\right) \right) .%
\end{array}%
\]

Because $\mathcal{R}\left( F^{0}\right) \cap \mathcal{R}\left( \mathcal{F}%
^{0}\right) $ is still a rich family, one has%
\[
\forall u\in \mathbf{V}_{\Gamma _{1}}\left( \Omega \right) \text{, }\forall
B\in \mathcal{R}:F^{0}\left( u,.\right) =\mathcal{F}^{0}\left( u,.\right) 
\text{, on }\mathcal{R}\left( F^{0}\right) \cap \mathcal{R}\left( \mathcal{F}%
^{0}\right) .
\]

Because the countable intersection of rich families is a rich family too,
one can repeat\ the above reasoning and deduce the existence of a rich
family $\mathcal{R}$ in $\mathcal{B}\left( \mathbf{R}^{N}\right) $ on which
the above limits coincide. One thus obtains, for every $u\in \mathbf{V}%
_{\Gamma _{1}}\left( \Omega \right) $ and every $B\in \mathcal{R}$%
\begin{equation}
\left( \underset{\varepsilon \rightarrow 0}{\Gamma \text{-}\lim }%
G^{\varepsilon }\right) \left( u,\omega \right) =\Phi ^{0}\left( u\right)
+F^{0}\left( u,B\right) ,  \label{14}
\end{equation}%
where the $\Gamma $-limit is taken with respect to the topology $\tau $.

Thanks to the above properties 1., 2., 3. and 4. and to the relations (\ref%
{30}) and (\ref{31}), $F^{0}$ belongs to $\mathbb{F}$. Because $\Phi
^{\varepsilon }$ and $F^{\varepsilon }$ are quadratic, thanks to Corollary %
\ref{deux} and to Remark \ref{un}, there exist $\lambda \in \mathcal{M}_{0}$
finite, a symmetric matrix $\left( a_{ij}\right) _{i,j=1,..,N}$ of Borel
functions from $\mathbf{R}^{N}$ to $\mathbf{R}$ with $a_{ij}\left( x\right)
\zeta _{i}\zeta _{j}\geq 0$, $\forall \zeta \in \mathbf{R}^{N}$ and for q.e. 
$x\in \mathbf{R}^{N}$, such that, for every $u\in \mathbf{V}_{\Gamma
_{1}}\left( \Omega \right) $ and every $\omega \in \mathcal{R}\cap \mathcal{O%
}\left( \mathbf{R}^{N}\right) $%
\[
F^{0}\left( u,\omega \right) =\dint\nolimits_{\omega }u_{i}u_{j}d\mu _{ij},
\]%
with $\mu ^{\bullet }=\left( \mu _{ij}\right) _{i,j=1,..,N}=\left(
a_{ij}\lambda \right) _{i,j=1,..,N}+\infty _{\mathbf{R}^{N}\backslash
\Lambda }Id$, where $\Lambda $ is defined as in Remark \ref{un}.

Let us now precise the support of $\mu ^{\bullet }$. For every $u,v\in 
\mathbf{H}_{\Gamma _{1}}^{1}\left( \mathbf{R}^{N},\limfunc{div}\right) $,
such that $v_{\mid \Omega }=u_{\mid \Omega }$, one has%
\[
F^{0}\left( u,\mathbf{R}^{N}\right) =\dint\nolimits_{\mathbf{R}%
^{N}}v_{i}v_{j}d\mu _{ij},
\]%
because $F^{0}$ is local ($\mathbf{R}^{N}$ belongs to $\mathcal{R}$ because
every rich family is dense, and every dense family contains $\mathbf{R}^{N}$%
). One deduces that $supp\left( \mu ^{\bullet }\right) \subset \Omega \cup
\Gamma _{2}$. Thanks to (\ref{14}), one has%
\begin{equation}
0\leq \dint\nolimits_{\mathbf{R}^{N}}u_{i}u_{j}d\mu _{ij}+\Phi ^{0}\left(
u\right) \leq \underset{\varepsilon \rightarrow 0}{\lim \inf }\left( \Phi
^{\varepsilon }\left( u\right) +F^{\varepsilon }\left( u,\mathbf{R}%
^{N}\right) \right) .  \label{37}
\end{equation}

Taking $u\in \mathbf{H}_{0}^{1}\left( \Omega ,\limfunc{div}\right) =\left\{
u\in \mathbf{H}_{0}^{1}\left( \Omega ,\mathbf{R}^{N}\right) \mid \limfunc{div%
}\left( u\right) =0\right\} $, then, for every $\varepsilon >0$, $%
F^{\varepsilon }\left( u,\mathbf{R}^{N}\right) =0$, and $\lim
\inf_{\varepsilon \rightarrow 0}\Phi ^{\varepsilon }\left( u\right) =\Phi
^{0}\left( u\right) $. One deduces, using (\ref{37}), that $\int_{\Omega
}u_{i}u_{j}d\mu _{ij}=0$, and thus that $supp\left( \mu ^{\bullet }\right)
\subset \Gamma _{2}$, which ends the proof.
\end{proof}

\begin{remark}
\begin{enumerate}
\item We thus get Navier's wall law at the zeroth-order limit of the problem
(\ref{Peps}).

\item Theorem \ref{quatre} can be extended to every kind of obstacle
functional in $\mathbb{F}$, using Theorem \ref{theun} for the integral
representation. One can define, for example, sequences of obstacle
functionals on $\mathbf{H}^{1}\left( \mathbf{R}^{N},\mathbf{R}^{N}\right)
\times \mathcal{O}\left( \mathbf{R}^{N}\right) $ of the kind%
\[
\left( F^{\varepsilon }\right) ^{+}\left( u,\omega \right) =\left\{ 
\begin{array}{ll}
0 & \text{if }\widetilde{u}\geq 0\text{ q.e. on }\Gamma _{2,\varepsilon
}\cap \omega , \\ 
+\infty & \text{otherwise,}%
\end{array}%
\right.
\]%
the limit $\left( F^{0}\right) ^{+}$ of which is defined on $\mathbf{V}%
_{\Gamma _{1}}\left( \Omega \right) \times \left( \mathcal{R}^{+}\cap 
\mathcal{O}\left( \mathbf{R}^{N}\right) \right) $ (for some rich family $%
\mathcal{R}^{+}$) as%
\[
\left( F^{0}\right) ^{+}\left( u,\omega \right) =\dint\nolimits_{\omega \cap
\Gamma _{2}}u_{i}^{+}u_{j}^{+}d\mu _{ij},
\]%
where $u_{i}^{+}=\max \left( 0,u_{i}\right) $, $i=1,\ldots ,N$.

\item One proves that $\mu _{ij}\in \mathbf{H}^{-1/2}\left( \Gamma
_{2}\right) $, $\forall i,j=1,\ldots ,N$, where $\mu _{ij}$ is the measure
defined in Theorem \ref{quatre}. One first observes that the measure $%
\lambda $ defined in Theorem \ref{theun} belongs to $\mathbf{H}^{-1/2}\left(
\Gamma _{2}\right) ^{+}$. $\lambda $ is indeed finite.\ Because for every
compact subset $K\subset \Gamma _{2}$, one has $\lambda \left( K\right)
<+\infty $, hence $\lambda $ is a Radon nonnegative measure.\ Moreover,
because $\lambda $ is absolutely continuous with respect to the capacity $%
Cap $, we deduce from \cite[Theorem 2.2]{Dal2}, the existence of a Radon
measure $\varkappa \in \mathbf{H}^{-1/2}\left( \Gamma _{2}\right) $ and of a
Borel function $f:\Gamma _{2}\rightarrow \left[ 0,+\infty \right[ $ such
that $f=\frac{d\lambda }{d\varkappa }$.
\end{enumerate}
\end{remark}

Let us come back to the study of problem (\ref{Peps}). The solution $%
u^{\varepsilon }$ of (\ref{Peps}), with the homogeneous Dirichlet boundary
conditions on $\partial \Omega _{\varepsilon }$ is also the solution of the
minimization problem%
\begin{equation}
\underset{v\in \mathbf{L}^{2}\left( \mathbf{R}^{N},\mathbf{R}^{N}\right) }{%
\inf }\left( G^{\varepsilon }\left( v,\mathbf{R}^{N}\right)
+2\dint\nolimits_{\Omega _{\varepsilon }}\left( u^{\varepsilon }\cdot \nabla
\right) u^{\varepsilon }\cdot vdx-2\dint\nolimits_{\Omega _{\varepsilon
}}f\cdot vdx\right) .  \label{mineps}
\end{equation}

From Theorem \ref{quatre}, one deduces the following asymptotic behaviour of
the solution of (\ref{Peps}).

\begin{corollary}
The solution $\left( u^{\varepsilon },p^{\varepsilon }\right) $ of (\ref%
{Peps}), is such that $\left( u^{\varepsilon }\right) _{\varepsilon }$
converges to $u^{0}$ in the topology $\tau $ and $\left( \left(
p^{\varepsilon }\right) _{\mid \Omega }\right) _{\varepsilon }$ converges to 
$p^{0}$ in the strong topology of $\mathbf{L}^{2}\left( \Omega \right) /%
\mathbf{R}$, where $\left( u^{0},p^{0}\right) $ belongs to $\mathbf{V}%
_{0,\Gamma _{1}}\left( \Omega \right) \times \mathbf{L}^{2}\left( \Omega
\right) /\mathbf{R}$ and is the solution of the limit minimization problem%
\begin{equation}
\underset{v\in \mathbf{L}^{2}\left( \mathbf{R}^{N},\mathbf{R}^{N}\right) }{%
\inf }\left( G^{0}\left( v,\mathbf{R}^{N}\right) +2\dint\nolimits_{\Omega
}\left( u^{0}\cdot \nabla \right) u^{0}\cdot vdx-2\dint\nolimits_{\Omega
}f\cdot vdx\right) ,  \label{min0}
\end{equation}%
or of the limit problem with Navier law%
\begin{equation}
\left\{ 
\begin{array}{rlll}
-\nu \Delta u^{0}+\left( u^{0}\cdot \nabla \right) u^{0}+\nabla p^{0} & = & f
& \text{in }\Omega , \\ 
\limfunc{div}\left( u^{0}\right) & = & 0 & \text{in }\Omega , \\ 
u^{0} & = & 0 & \text{on }\Gamma _{1}, \\ 
u^{0}\cdot n & = & 0 & \text{on }\Gamma _{2}, \\ 
\left( I-n\otimes n\right) \nu \dfrac{\partial u^{0}}{\partial n}+\mu
^{\bullet }u^{0} & = & 0 & \text{on }\Gamma _{2}.%
\end{array}%
\right.  \label{P0}
\end{equation}
\end{corollary}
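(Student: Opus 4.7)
The plan is to combine the $\Gamma$-convergence result of Theorem~\ref{quatre} with the stability of minimizers, using the uniform estimates of Proposition~\ref{estim} to extract convergent subsequences and identify the limit via the minimization formulation~(\ref{mineps}).

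First I would extract convergent subsequences. The estimates of Proposition~\ref{estim} give that the extensions of $u^{\varepsilon}$ are bounded in $\mathbf{H}^{1}_{\Gamma_{1}}(\mathbf{R}^{N},\operatorname{div})$, so up to a subsequence $u^{\varepsilon}\rightharpoonup u^{0}$ weakly in $\mathbf{H}^{1}$ and strongly in $\mathbf{L}^{2}(\mathbf{R}^{N},\mathbf{R}^{N})$ by Rellich; in particular the convergence takes place in the topology $\tau$, since the energy is bounded. The pressures $\overline{p^{\varepsilon}}$ are bounded in $\mathbf{L}^{2}(\Omega_{\varepsilon})/\mathbf{R}$, so the restrictions to $\Omega$ converge weakly in $\mathbf{L}^{2}(\Omega)/\mathbf{R}$ to some $p^{0}$.

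Next I would identify $u^{0}\in\mathbf{V}_{0,\Gamma_{1}}(\Omega)$. Divergence freeness and the trace condition $u^{0}=0$ on $\Gamma_{1}$ pass to the limit by weak $\mathbf{H}^{1}$-convergence. For the Navier-type constraint $u^{0}\cdot n=0$ on $\Gamma_{2}$, I would use that $\liminf_{\varepsilon}G^{\varepsilon}(u^{\varepsilon},\mathbf{R}^{N})<+\infty$ together with Theorem~\ref{quatre}, yielding $G^{0}(u^{0},\mathbf{R}^{N})<+\infty$; by Corollary~\ref{deux} and the structure of $\mu^{\bullet}$ in Remark~\ref{un}, this forces $u^{0}(x)\in\mathbf{V}(x)$ q.e., which, together with the localization of $\operatorname{supp}(\mu^{\bullet})$ in $\Gamma_{2}$ established at the end of the proof of Theorem~\ref{quatre}, translates into the desired pointwise condition on $\Gamma_{2}$.

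Then I would pass to the limit in the minimization problem~(\ref{mineps}). The functional $G^{\varepsilon}$ $\Gamma$-converges to $G^{0}$ by Theorem~\ref{quatre}, so the key point is that the perturbation
\[
R^{\varepsilon}(v)=2\dint\nolimits_{\Omega_{\varepsilon}}(u^{\varepsilon}\cdot\nabla)u^{\varepsilon}\cdot v\,dx-2\dint\nolimits_{\Omega_{\varepsilon}}f\cdot v\,dx
\]
is continuous with respect to $\tau$-convergence of the test function, uniformly in $\varepsilon$: indeed $f\in\mathbf{L}^{\infty}$ and the Sobolev embedding $\mathbf{H}^{1}\hookrightarrow\mathbf{L}^{4}$ (for $N\leq 3$) together with the strong $\mathbf{L}^{2}$-convergence of $u^{\varepsilon}$ and the weak $\mathbf{L}^{2}$-convergence of $\nabla u^{\varepsilon}$ imply $R^{\varepsilon}(v_{\varepsilon})\to R^{0}(v)$ whenever $v_{\varepsilon}\xrightarrow{\tau} v$. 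The stability theorem for minimizers under $\Gamma$-convergence plus continuous perturbation (see \cite{Dal1}) then gives that $u^{0}$ minimizes~(\ref{min0}). Writing out the Euler--Lagrange equations of (\ref{min0}) on test functions in $\mathbf{V}_{0,\Gamma_{1}}(\Omega)$, and recovering the pressure $p^{0}\in\mathbf{L}^{2}(\Omega)/\mathbf{R}$ from the orthogonality to divergence-free fields via a de Rham argument, yields exactly system~(\ref{P0}) with the Navier boundary condition $(I-n\otimes n)\nu\frac{\partial u^{0}}{\partial n}+\mu^{\bullet}u^{0}=0$ on $\Gamma_{2}$. Uniqueness of the limit (for instance under a smallness assumption on $f$ inherited from the Navier--Stokes setting) then upgrades the subsequential convergence to convergence of the full sequence.

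The main obstacle I expect is handling the nonlinear convective term inside the perturbation $R^{\varepsilon}$: the functional in~(\ref{mineps}) is not genuinely quadratic because $u^{\varepsilon}$ itself appears in the coefficients, so one has to argue carefully that $(u^{\varepsilon}\cdot\nabla)u^{\varepsilon}$ converges in a distributional sense strong enough to pair with $\tau$-converging test functions; this is where the compactness gained from strong $\mathbf{L}^{2}$-convergence (via Rellich) is essential. A secondary technical point is promoting the weak convergence of pressures $\overline{p^{\varepsilon}}|_{\Omega}\rightharpoonup p^{0}$ to strong convergence in $\mathbf{L}^{2}(\Omega)/\mathbf{R}$, which requires testing the momentum equation against $\psi$ solving $\operatorname{div}\psi=\overline{p^{\varepsilon}}-p^{0}$ and exploiting the already established strong convergence of $u^{\varepsilon}$ and of $\nabla u^{\varepsilon}$ in $\mathbf{L}^{2}(\Omega)$.
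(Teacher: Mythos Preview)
Your overall strategy---$\Gamma$-convergence of $G^{\varepsilon}$ plus continuous perturbation, then stability of minimizers, then Euler--Lagrange and de Rham for the pressure---is exactly the paper's route, and your treatment of the convective term and of the strong pressure convergence is at least as careful as the paper's (the paper simply invokes Proposition~\ref{estim} for the latter, which only gives bounds).

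There is, however, a genuine gap in your derivation of the constraint $u^{0}\cdot n=0$ on $\Gamma_{2}$. You argue that $G^{0}(u^{0},\mathbf{R}^{N})<+\infty$ forces $u^{0}(x)\in\mathbf{V}(x)$ q.e., and that together with $\operatorname{supp}(\mu^{\bullet})\subset\Gamma_{2}$ this ``translates into'' the normal condition. But by Remark~\ref{un}, $\mathbf{V}(x)$ is either all of $\mathbf{R}^{N}$ (on $\Lambda$) or $\{0\}$ (on $\mathbf{R}^{N}\setminus\Lambda$); it never singles out the tangent hyperplane to $\Gamma_{2}$. Finiteness of $G^{0}$ therefore only forces $u^{0}=0$ on $\mathbf{R}^{N}\setminus\Lambda$, and since the measures $\mu_{ij}$ may well be finite on $\Gamma_{2}$ (as in both special cases of the paper), there is in general no reason that $\Gamma_{2}\subset\mathbf{R}^{N}\setminus\Lambda$. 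The matrix $\mu^{\bullet}$ encodes the tangential friction law, not the kinematic constraint $u\cdot n=0$.

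The paper obtains $u^{0}\cdot n=0$ by a direct divergence argument, independent of the $\Gamma$-limit structure: for any $\varphi\in\mathbf{C}^{1}(\mathbf{R}^{N})$, since $\operatorname{div}(u^{\varepsilon})=0$ in $\Omega_{\varepsilon}$ and $u^{\varepsilon}=0$ on $\partial\Omega_{\varepsilon}$, one has $0=\int_{\Omega_{\varepsilon}}u^{\varepsilon}\cdot\nabla\varphi\,dx=\int_{\Omega}u^{\varepsilon}\cdot\nabla\varphi\,dx+\int_{\Sigma_{\varepsilon}}u^{\varepsilon}\cdot\nabla\varphi\,dx$. The second integral tends to $0$ because $|\Sigma_{\varepsilon}|\to 0$ while $\|u^{\varepsilon}\|_{\mathbf{L}^{2}}$ stays bounded, and the first tends to $\int_{\Omega}u^{0}\cdot\nabla\varphi\,dx=\int_{\Gamma_{2}}(u^{0}\cdot n)\varphi\,d\Gamma_{2}$ by strong $\mathbf{L}^{2}$-convergence and another integration by parts; hence $u^{0}\cdot n=0$ on $\Gamma_{2}$. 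You should replace your $\mathbf{V}(x)$ argument by this step.
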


\begin{proof}
We first observe that, for every sequence $\left( v_{\varepsilon }\right)
_{\varepsilon }$ converging to $v$ in the topology $\tau $%
\[
\underset{\varepsilon \rightarrow 0}{\lim }\dint\nolimits_{\Omega
_{\varepsilon }}f\cdot v_{\varepsilon }dx=\dint\nolimits_{\Omega }f\cdot vdx,
\]

Thanks to the properties of the $\Gamma $-convergence, $\left(
u^{\varepsilon }\right) _{\varepsilon }$ converges to $u^{0}$ in the
topology $\tau $, with $u^{0}\in \mathbf{V}_{\Gamma _{1}}\left( \Omega
\right) $, and%
\[
\underset{\varepsilon \rightarrow 0}{\lim }G^{\varepsilon }\left(
u^{\varepsilon },\mathbf{R}^{N}\right) =G^{0}\left( u^{0},\mathbf{R}%
^{N}\right) =\nu \dint\nolimits_{\Omega }\left\vert \nabla u^{0}\right\vert
^{2}dx+\dint\nolimits_{\Gamma _{2}}\left( u^{0}\right) _{i}\left(
u^{0}\right) _{j}d\mu _{ij}.
\]

Then%
\[
\underset{\varepsilon \rightarrow 0}{\lim }\dint\nolimits_{\Omega
_{\varepsilon }}\left( u^{\varepsilon }\cdot \nabla \right) u^{\varepsilon
}\cdot v_{\varepsilon }dx=\dint\nolimits_{\Omega }\left( u^{0}\cdot \nabla
\right) u^{0}\cdot vdx,
\]%
for every sequence $\left( v_{\varepsilon }\right) _{\varepsilon }$
converging to $v$ in the topology $\tau $. For every $\varphi \in \mathbf{C}%
^{1}\left( \mathbf{R}^{N}\right) $, one has%
\[
\left\vert \dint\nolimits_{\Sigma _{\varepsilon }}u^{\varepsilon }\cdot
\nabla \varphi dx\right\vert \leq \left( \dint\nolimits_{\Sigma
_{\varepsilon }}\left\vert \nabla \varphi \right\vert ^{2}dx\right)
^{1/2}\left( \dint\nolimits_{\mathbf{R}^{N}}\left\vert u^{\varepsilon
}\right\vert ^{2}dx\right) ^{1/2},
\]%
and thus $\lim_{\varepsilon \rightarrow 0}\int_{\Sigma _{\varepsilon
}}u^{\varepsilon }\cdot \nabla \varphi dx=0$. Because $\limfunc{div}\left(
u^{\varepsilon }\right) =\limfunc{div}\left( u^{0}\right) =0$, and $%
u^{\varepsilon }=0$, q.e. on $\Gamma _{2}$, one has%
\[
0=\dint\nolimits_{\Omega _{\varepsilon }}u^{\varepsilon }\cdot \nabla
\varphi dx=\dint\nolimits_{\Omega }u^{\varepsilon }\cdot \nabla \varphi
dx+\dint\nolimits_{\Sigma _{\varepsilon }}u^{\varepsilon }\cdot \nabla
\varphi dx.
\]

Taking the limit of this equality, we obtain%
\[
0=\dint\nolimits_{\Omega }u^{0}\cdot \nabla \varphi
dx=\dint\nolimits_{\Gamma _{2}}u^{0}\cdot n\varphi d\Gamma _{2},
\]%
which proves that $u^{0}\cdot n=0$ on $\Gamma _{2}$. Thus $u^{0}\in \mathbf{V%
}_{0,\Gamma _{1}}\left( \Omega \right) $ is the solution of the problem (\ref%
{min0}). The variational formulation of (\ref{min0}) can be written as%
\[
\begin{array}{l}
\forall \varphi \in \mathbf{V}_{0,\Gamma _{1}}\left( \Omega \right)
:\dint_{\Omega }\left( -\nu \Delta u^{0}+\left( u^{0}\cdot \nabla \right)
u^{0}\right) \cdot \varphi dx \\ 
\qquad +\dint_{\Gamma _{2}}\nu \dfrac{\partial u^{0}}{\partial n}\cdot
\varphi d\Gamma _{2}+\dint_{\Gamma _{2}}\left( u^{0}\right) _{i}\varphi
_{j}d\mu _{ij}=\dint_{\Omega }f\cdot \varphi dx.%
\end{array}%
\]

There exists $p_{0}\in L^{2}\left( \Omega \right) /\mathbf{R}$ such that $%
-\nu \Delta u^{0}+\left( u^{0}\cdot \nabla \right) u^{0}-f=-\nabla p_{0}$.\
Thanks to Proposition \ref{estim}, the sequence $\left( \left(
p^{\varepsilon }\right) _{\mid \Omega }\right) _{\varepsilon }$ converges to 
$p^{0}$ in the strong topology of $\mathbf{L}^{2}\left( \Omega \right) /%
\mathbf{R}$. Because $\varphi \cdot n=0$ on $\Gamma _{2}$, with $n=\left(
0,0,1\right) $, one has: $\nu \frac{\partial u^{0}}{\partial n}\cdot \varphi
=\left( Id-n\otimes n\right) \nu \frac{\partial u^{0}}{\partial n}\cdot
\varphi $, which ends the proof.
\end{proof}

\section{Special cases}

We intend to specialize the general result obtained in Theorem \ref{quatre},
in two cases where the boundary $\Gamma _{2,\varepsilon }$ can be defined
through some Lipschitz continuous function.

\subsection{Periodic case}

In this section, we suppose that $\Omega \subset \left\{ x_{3}>0\right\} $
with $\partial \Omega \cap \left\{ x_{3}=0\right\} =\Gamma _{2}$, $\Gamma
_{2}$ containing $0$. We define $Y=\left( -1/2,1/2\right) ^{2}$ and consider
a $Y$-periodic function $h\in \mathbf{C}_{c}^{2}\left( Y,\mathbf{R}%
_{+}\right) $. For every $k\in \mathbf{Z}^{2}$, we define $Y_{\varepsilon
}^{k}=\left( -\varepsilon /2,\varepsilon /2\right) ^{2}+\left(
k_{1}\varepsilon ,k_{2}\varepsilon \right) $, and let $I_{\varepsilon
}=\left\{ k\in \mathbf{Z}^{2}\mid Y_{\varepsilon }^{k}\subset \Gamma
_{2}\right\} $. We define $h_{\varepsilon }$ on $\Gamma _{2}$ through%
\[
h_{\varepsilon }\left( x^{\prime }\right) =\left\{ 
\begin{array}{ll}
h\left( \dfrac{x^{\prime }}{\varepsilon }\right) & \text{if there exists }%
k\in I_{\varepsilon }\text{ such that }x^{\prime }=\left( x_{1},x_{2}\right)
\in Y_{\varepsilon }^{k}, \\ 
0 & \text{otherwise}%
\end{array}%
\right.
\]%
and $\Sigma _{\varepsilon }$ through%
\[
\Sigma _{\varepsilon }=\left\{ x\in \mathbf{R}^{3}\mid x^{\prime }=\left(
x_{1},x_{2}\right) \in \Gamma _{2}\text{, }-\varepsilon h_{\varepsilon
}\left( x^{\prime }\right) <x_{3}<0\right\} .
\]

Thanks to Theorem \ref{quatre}, there exist a rich family $\mathcal{R}%
\subset \mathcal{B}\left( \mathbf{R}^{3}\right) $, a symmetric matrix $%
\left( \mu _{ij}\right) _{i,j=1,\ldots ,N}$ of Borel measures having the
same support contained in $\Gamma _{2}$, absolutely continuous with respect
to the capacity $Cap$, and satisfying $\mu _{ij}\left( B\right) \zeta
_{i}\zeta _{j}\geq 0$, $\forall \zeta \in \mathbf{R}^{3}$, $\forall B\in 
\mathcal{B}\left( \mathbf{R}^{3}\right) $, such that, for every $u\in 
\mathbf{V}_{\Gamma _{1}}\left( \Omega \right) $ and every $\omega \in 
\mathcal{R}\cap \mathcal{O}\left( \mathbf{R}^{3}\right) $%
\begin{equation}
\inf \left\{ 
\begin{array}{l}
\underset{\varepsilon \rightarrow 0}{\lim \inf }\Phi ^{\varepsilon }\left(
z_{\varepsilon }\right) \mid u+z_{\varepsilon }=0\text{ on} \\ 
\qquad \left\{ x_{3}=-\varepsilon h_{\varepsilon }\left( x^{\prime }\right)
\right\} \cap \omega \text{ and }z_{\varepsilon }\overset{\tau }{\underset{%
\varepsilon \rightarrow 0}{\rightharpoonup }}0%
\end{array}%
\right\} =\dint\nolimits_{\omega \cap \Gamma _{2}}u_{i}u_{j}d\mu _{ij},
\label{44}
\end{equation}%
where $\Phi ^{\varepsilon }$ is the energy functional defined in (\ref%
{Phieps}).

Because the lower boundary $\Gamma _{2,\varepsilon }$ of $\Sigma
_{\varepsilon }$, defined through the equality $\Gamma _{2,\varepsilon
}=\left\{ \left( x^{\prime },x_{3}\right) \mid x_{3}=-\varepsilon
h_{\varepsilon }\left( x^{\prime }\right) \right\} $, has a periodic
structure, the measures $\mu _{ij}$, $i,j=1,\ldots ,N$, are invariant under
translations on $\Gamma _{2}$. This implies $\mu _{ij}=K_{ij}dx^{\prime }$,
where $K_{ij}$, $i,j=1,2,3$, are constants in $\overline{\mathbf{R}}$
satisfying $K_{ij}\zeta _{i}\zeta _{j}\geq 0$, $\forall \zeta \in \mathbf{R}%
^{3}$.

The purpose of this section is to identify these constants $K_{ij}$, $%
i,j=1,2,3$. We observe that we do not have to determine $K_{i3}$, $i=1,2,3$,
because, in the limit problem, one has $u\cdot n=u\cdot e_{3}=u_{3}=0$.

\begin{theorem}
The limit Navier wall law of the limit problem (\ref{P0}) is in this case%
\[
\dfrac{\partial \left( u^{0}\right) _{m}}{\partial x_{3}}=c_{m}\left(
u^{0}\right) _{m}\text{, on }\Gamma _{2}\text{, }m=1,2,
\]%
where the constants $c_{m}$ are defined in (\ref{cm}).
\end{theorem}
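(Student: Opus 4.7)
The plan is to specialize Theorem~\ref{quatre} by exploiting the $\varepsilon Y$-periodic structure of $\Sigma_\varepsilon$ to identify the matrix $\mu^\bullet$ via a local cell problem.

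\textbf{Reduction to constants.} The admissible set $\{u+z_\varepsilon = 0 \text{ on } \Gamma_{2,\varepsilon}\cap\omega\}$ and the energy $\Phi^\varepsilon$ appearing in the characterization (\ref{44}) are invariant under the discrete horizontal translations $x'\mapsto x'+\varepsilon k$, $k\in \mathbf{Z}^2$. Letting $\varepsilon\to 0$, this discrete invariance becomes continuous translation invariance on $\Gamma_2$; together with absolute continuity with respect to $Cap$ and the $\mathbf{R}^2$-Haar characterization, this forces $\mu_{ij} = K_{ij}\,dx'$ for constants $K_{ij}\in[0,+\infty]$. Because the limit problem (\ref{P0}) imposes $u_3^0 = 0$ on $\Gamma_2$, only the $2\times 2$ tangential block $(K_{mn})_{m,n=1,2}$ enters the resulting Navier wall law.

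\textbf{Cell problem and identification.} To compute $K_{mn}$, I test (\ref{44}) with $u$ locally constant equal to $(\zeta,0)$ near $\Gamma_2$ for $\zeta\in\mathbf{R}^2$, and construct a recovery sequence of the form $z_\varepsilon(x) = w^\zeta(x'/\varepsilon,x_3/\varepsilon)$ in $\Sigma_\varepsilon$, extended by $0$ in $\Omega$ and glued across $\Gamma_2$ by a matching layer of width $\sqrt{\varepsilon}$. Here $w^\zeta$ is the $Y$-periodic, divergence-free minimizer on the unit strip $Z_h = \{(y',y_3):y'\in Y,\,-h(y')<y_3<0\}$ of the Dirichlet energy under the conditions $w^\zeta = -(\zeta,0)$ on the rough bottom $\{y_3=-h(y')\}$ and $w^\zeta = 0$ on the flat top $\{y_3=0\}$. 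The change of variables $y=x/\varepsilon$ in $\Sigma_\varepsilon$ produces a volume factor $\varepsilon^3$ and a gradient factor $\varepsilon^{-2}$; combined with the prefactor $\varepsilon$ in the layer part of $\Phi^\varepsilon$ and with the fact that each $Y_\varepsilon^k$ contributes a sample of $|u|^2$ of size $\varepsilon^2$, one obtains an $O(1)$ contribution that Riemann-sums to
\[
K_{mn}\zeta_m \zeta_n \le \nu \int_{Z_h} |\nabla w^\zeta|^2\,dy.
\]
The matching lower bound is obtained by applying two-scale convergence on $\Sigma_\varepsilon$ to an arbitrary admissible $z_\varepsilon$ and using the cell problem as the weak lower semicontinuous envelope, giving equality.

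\textbf{Diagonalisation and conclusion.} The tangential components in the cell problem decouple: taking $\zeta=e_m$, the minimizer $w^{e_m}$ has its leading tangential component only in the $e_m$-direction (the second tangential component and the normal component are pressure-determined and quadratically orthogonal to the competing direction), so the quadratic form $\zeta\mapsto\int_{Z_h}|\nabla w^\zeta|^2\,dy$ is diagonal in the canonical basis $(e_1,e_2)$. Hence $K_{mn} = c_m\,\delta_{mn}$, where $c_m$ is the scalar constant defined in (\ref{cm}). Substituting $\mu^\bullet = \mathrm{diag}(c_1,c_2,0)\,dx'$ into the fifth equation of (\ref{P0}) and reading off the tangential components on $\Gamma_2$ (where $n=-e_3$) yields $\partial u^0_m/\partial x_3 = c_m u^0_m$ for $m=1,2$, as claimed.

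\textbf{Main obstacle.} The delicate step is the construction of an admissible recovery sequence $z_\varepsilon$ that belongs to $\mathbf{H}^1_{\Gamma_1}(\mathbf{R}^N,\mathrm{div})$: it must simultaneously vanish on the rough $\Gamma_{2,\varepsilon}$, be smoothly extended by $0$ across $\Gamma_2$ and into $\Omega$, and remain globally divergence-free. Enforcing incompressibility in the transition layer of width $\sqrt{\varepsilon}$ requires a Bogovskii-type correction whose energy contribution must be shown to be $o(1)$; this in turn relies on uniform $\mathbf{H}^{1/2}$-trace estimates for $w^\zeta$ on $\{y_3=0\}$, which is where most of the technical effort lies.
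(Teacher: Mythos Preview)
Your overall strategy---reduce $\mu^\bullet$ to a constant matrix by translation invariance, then identify the constants via a cell Stokes problem on $Z_h$---coincides with the paper's. The execution differs at three points, and the last of them is a genuine gap.

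\textbf{Recovery sequence.} Your ``main obstacle'' is misplaced. Because your cell solution satisfies $w^\zeta=0$ on the flat top $\{y_3=0\}$, its rescaling extends by zero into $\Omega$ as a globally divergence-free $H^1$ function with no matching layer and no Bogovskii correction: the trace on $\Gamma_2$ is identically zero, so the $H^{1/2}$ estimate you worry about is vacuous. The actual issue is the extension \emph{below} the rough graph $\Gamma_{2,\varepsilon}$, where the test function equals $e^m$; the paper handles this by solving a second cell problem $\widetilde w^m$ on the complementary strip $\widetilde Z_h=\{-H<y_3<-h(y')\}$ with $\widetilde w^m=e^m$ on the graph and $\widetilde w^m=0$ on $\{y_3=-H\}$, then rescaling.

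\textbf{Lower bound.} The paper does not use two-scale convergence. It writes the subdifferential (convexity) inequality
\[
\Phi^\varepsilon(z_\varepsilon)\ \ge\ \Phi^\varepsilon(z_\varepsilon^{0m})+2\nu\varepsilon\int_{\Sigma_\varepsilon}\nabla z_\varepsilon^{0m}\cdot\nabla(z_\varepsilon-z_\varepsilon^{0m})\,dx
\]
and shows, after one integration by parts and using the $H^2$-regularity of $w^m$, that the cross term vanishes in the limit. This is more direct than setting up two-scale convergence on a collapsing layer, which you would first have to rescale to a fixed domain.

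\textbf{Off-diagonal term.} Here your argument fails. The assertion that $w^{e_m}$ ``has its leading tangential component only in the $e_m$-direction'' is false for a generic profile $h$: a boundary datum $e_1$ on a skewed graph induces a nontrivial $e_2$-component in the Stokes flow, so there is no a priori decoupling and the quadratic form $\zeta\mapsto\int_{Z_h}|\nabla w^\zeta|^2$ is not obviously diagonal. The paper instead tests (\ref{44}) with $u=-(e^1+e^2)$, uses the orthogonality $\int_{Z_h}\nabla w^1\cdot\nabla w^2\,dy=0$ to obtain $K_{12}\le0$ from the recovery sequence $z_\varepsilon^{01}+z_\varepsilon^{02}$, and then repeats the subdifferential argument to get $K_{12}\ge0$. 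You would need either to justify that orthogonality directly or to reproduce the two-sided bound.
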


\begin{proof}
We define the set $Z_{h}=\left\{ x\mid x^{\prime }\in Y\text{, }-h\left(
x^{\prime }\right) <x_{3}<0\right\} $ and consider in $Z_{h}$ the local
Stokes problems for $m=1,2$%
\begin{equation}
\left\{ 
\begin{array}{rlll}
-\Delta w^{m}+\nabla q^{m} & = & e^{m} & \text{in }Z_{h}, \\ 
\limfunc{div}\left( w^{m}\right) & = & 0 & \text{in }Z_{h}, \\ 
w^{m} & = & e^{m} & \text{on }\left\{ x_{3}=-h\left( x^{\prime }\right)
\right\} , \\ 
w^{m} & = & 0 & \text{on }\left\{ x_{3}=0\right\} , \\ 
w^{m},q^{m} &  &  & Y\text{-periodic,}%
\end{array}%
\right.  \label{Pm}
\end{equation}%
where $e^{m}$ is the $m$-th vector of the canonical basis of $\mathbf{R}^{3}$%
. Lax-Milgram' Theorem implies that (\ref{Pm}) has a unique solution $\left(
w^{m},q^{m}\right) $ with%
\[
\begin{array}{rll}
w^{m} & \in & \mathbf{V}\left( Z_{h}\right) =\left\{ 
\begin{array}{l}
u\in \mathbf{H}^{1}\left( Z_{h},\mathbf{R}^{3}\right) \mid \limfunc{div}%
\left( u\right) =0\text{ in }Z_{h}\text{,} \\ 
\qquad u=0\text{ on }\left\{ x_{3}=0\right\} \text{, }u\text{ }Y\text{%
-periodic}%
\end{array}%
\right\} \\ 
q^{m} & \in & \mathbf{L}^{2}\left( Z_{h}\right) /\mathbf{R}\text{, }q^{m}%
\text{ }Y\text{-periodic.}%
\end{array}%
\]

Let $z_{h}=\max_{x^{\prime }\in Y}h\left( x^{\prime }\right) $ and choose $%
H>z_{h}$. We define%
\[
\widetilde{Z}_{h}=\left\{ x\mid x^{\prime }\in Y\text{, }-H<x_{3}<-h\left(
x^{\prime }\right) \right\}
\]%
and consider in $\widetilde{Z}_{h}$ problems similar to (\ref{Pm}) except
that we impose $\widetilde{w}^{m}=e^{m}$ on $\left\{ x_{3}=-h\left(
x^{\prime }\right) \right\} $ and $\widetilde{w}^{m}=0$ on $\left\{
x_{3}=-H\right\} $. Let us define%
\[
\begin{array}{rll}
\widetilde{\Sigma }_{\varepsilon } & = & \left\{ x\in \mathbf{R}^{3}\mid
x^{\prime }=\left( x_{1},x_{2}\right) \in \Gamma _{2}\text{, }-\varepsilon
H<x_{3}<-\varepsilon h_{\varepsilon }\left( x^{\prime }\right) \right\} , \\ 
B_{\varepsilon } & = & \left\{ x\in \mathbf{R}^{3}\mid x^{\prime }=\left(
x_{1},x_{2}\right) \in \Gamma _{2}\text{, }-\varepsilon H<x_{3}<0\right\}%
\end{array}%
\]%
and the functions $\left( w^{\varepsilon m},q^{\varepsilon m}\right) $ and $%
\left( \widetilde{w}^{\varepsilon m},\widetilde{q}^{\varepsilon m}\right) $
through%
\[
\left\{ 
\begin{array}{rllrll}
w^{\varepsilon m}\left( x\right) & = & w^{m}\left( \dfrac{x}{\varepsilon }%
\right) , & q^{\varepsilon m}\left( x\right) & = & q^{m}\left( \dfrac{x}{%
\varepsilon }\right) , \\ 
\widetilde{w}^{\varepsilon m}\left( x\right) & = & \widetilde{w}^{m}\left( 
\dfrac{x}{\varepsilon }\right) , & \widetilde{q}^{\varepsilon m}\left(
x\right) & = & \widetilde{q}^{m}\left( \dfrac{x}{\varepsilon }\right) .%
\end{array}%
\right.
\]

We finally build the function $z_{\varepsilon }^{0m}$, on $B_{\varepsilon }$%
, through%
\[
z_{\varepsilon }^{0m}\left( x\right) =\left\{ 
\begin{array}{ll}
w^{\varepsilon m}\left( x\right) & \text{if }x\in \Sigma _{\varepsilon }, \\ 
e^{m} & \text{on }\left\{ x_{3}=-\varepsilon h_{\varepsilon }\left(
x^{\prime }\right) \right\} , \\ 
\widetilde{w}^{\varepsilon m}\left( x\right) & \text{on }\widetilde{\Sigma }%
_{\varepsilon }.%
\end{array}%
\right.
\]

Because $h=0$ on $\partial Y$, one can suppose that $z_{\varepsilon }^{0m}=0$
on $\partial \Gamma _{2}\times \left( -\varepsilon H,0\right) $. This
implies that $z_{\varepsilon }^{0m}\in \mathbf{H}_{\Gamma _{1}}^{1}\left( 
\mathbf{R}^{3},\limfunc{div}\right) $ and $z_{\varepsilon }^{0m}=0$ on $%
\partial B_{\varepsilon }$. Moreover%
\[
\begin{array}{lll}
\underset{\varepsilon \rightarrow 0}{\lim }\dint_{\mathbf{R}^{N}}\left\vert
z_{\varepsilon }^{0m}\right\vert ^{2}dx & = & \underset{\varepsilon
\rightarrow 0}{\lim }\dint_{B_{\varepsilon }}\left\vert z_{\varepsilon
}^{0m}\right\vert ^{2}dx \\ 
& = & \underset{\varepsilon \rightarrow 0}{\lim }\underset{k\in
I_{\varepsilon }}{\dsum }\dint_{Y_{\varepsilon }^{k}}\dint_{-\varepsilon
H}^{0}\left\vert z_{\varepsilon }^{0m}\right\vert ^{2}dx \\ 
& = & \underset{\varepsilon \rightarrow 0}{\lim }\left( 
\begin{array}{l}
\underset{k\in I_{\varepsilon }}{\dsum \varepsilon ^{3}}\dint_{Y}%
\dint_{-H}^{-h\left( x^{\prime }\right) }\left\vert \widetilde{w}^{m}\left(
x\right) \right\vert ^{2}dx \\ 
\qquad +\underset{k\in I_{\varepsilon }}{\dsum \varepsilon ^{3}}%
\dint_{Y}\dint_{-h\left( x^{\prime }\right) }^{0}\left\vert w^{m}\left(
x\right) \right\vert ^{2}dx%
\end{array}%
\right) \\ 
& = & 0%
\end{array}%
\]%
and%
\[
\begin{array}{lll}
\underset{\varepsilon \rightarrow 0}{\lim }\Phi ^{\varepsilon }\left(
z_{\varepsilon }^{0m}\right) = & = & \underset{\varepsilon \rightarrow 0}{%
\lim }\nu \varepsilon \dint_{\Sigma _{\varepsilon }}\left\vert \nabla
z_{\varepsilon }^{0m}\right\vert ^{2}dx \\ 
& = & \underset{k\in I_{\varepsilon }}{\nu \dsum \varepsilon ^{2}}%
\dint_{Y}\dint_{-h\left( x^{\prime }\right) }^{0}\left\vert \nabla
w^{m}\left( x\right) \right\vert ^{2}dx \\ 
& = & \nu \left\vert \Gamma _{2}\right\vert c_{m},%
\end{array}%
\]%
with%
\begin{equation}
c_{m}=\dint\nolimits_{Z_{h}}\left\vert \nabla w^{m}\right\vert ^{2}dx.
\label{cm}
\end{equation}

Taking $u=-e^{m}$ on $\Sigma _{\varepsilon }$, in (\ref{44}), one obtains%
\[
\begin{array}{lll}
K_{mm}\left\vert \Gamma _{2}\right\vert & = & \inf \left\{ 
\begin{array}{l}
\underset{\varepsilon \rightarrow 0}{\lim \inf }\Phi ^{\varepsilon }\left(
z_{\varepsilon }\right) \mid z_{\varepsilon }=e^{m}\text{ on }\left\{
x_{3}=-\varepsilon h_{\varepsilon }\left( x^{\prime }\right) \right\} , \\ 
\qquad z_{\varepsilon }\overset{\tau }{\underset{\varepsilon \rightarrow 0}{%
\rightharpoonup }}0%
\end{array}%
\right\} \\ 
& \leq & \underset{\varepsilon \rightarrow 0}{\lim }\Phi ^{\varepsilon
}\left( z_{\varepsilon }^{0m}\right) =\nu c_{m}\left\vert \Gamma
_{2}\right\vert .%
\end{array}%
\]

This implies%
\begin{equation}
K_{mm}\left\vert \Gamma _{2}\right\vert \leq \nu c_{m}\left\vert \Gamma
_{2}\right\vert .  \label{ineq1}
\end{equation}

Take any sequence $\left( z_{\varepsilon }\right) _{\varepsilon }\subset 
\mathbf{H}_{\Gamma _{1}}^{1}\left( \mathbf{R}^{3},\limfunc{div}\right) $
such that $z_{\varepsilon }=e^{m}$ on the surface $\left\{
x_{3}=-\varepsilon h_{\varepsilon }\left( x^{\prime }\right) \right\} $ and $%
\left( z_{\varepsilon }\right) _{\varepsilon }$ converges to $0$ in the
topology $\tau $.\ We write the subdifferential inequality%
\begin{equation}
\Phi ^{\varepsilon }\left( z_{\varepsilon }\right) \geq \Phi ^{\varepsilon
}\left( z_{\varepsilon }^{0m}\right) +2\nu \varepsilon
\dint\nolimits_{\Sigma _{\varepsilon }}\nabla z_{\varepsilon }^{0m}\cdot
\nabla \left( z_{\varepsilon }-z_{\varepsilon }^{0m}\right) dx.
\label{subdiff}
\end{equation}

We observe that%
\[
\begin{array}{ccl}
\varepsilon \dint_{\Sigma _{\varepsilon }}\nabla z_{\varepsilon }^{0m}\cdot
\nabla \left( z_{\varepsilon }-z_{\varepsilon }^{0m}\right) dx & = & 
-\varepsilon \dint_{\Sigma _{\varepsilon }}\Delta z_{\varepsilon }^{0m}\cdot
\left( z_{\varepsilon }-z_{\varepsilon }^{0m}\right) dx \\ 
&  & \qquad -\varepsilon \dint_{\Gamma _{2}}\dfrac{\partial z_{\varepsilon
}^{0m}}{\partial n}\cdot \left( z_{\varepsilon }-z_{\varepsilon
}^{0m}\right) d\Gamma _{2}.%
\end{array}%
\]

Using the regularity (at least $\mathbf{H}^{2}$) of $w^{m}$, we obtain%
\[
\varepsilon \Delta z_{\varepsilon }^{0m}\underset{\varepsilon \rightarrow 0}{%
\rightharpoonup }\mathbf{1}_{\Gamma _{2}}\dint\nolimits_{Z_{h}}\Delta
w^{m}\left( x\right) dx,
\]%
where the convergence takes place in the weak topology of $\mathbf{L}%
^{2}\left( \mathbf{R}^{3},\mathbf{R}^{3}\right) $ and $\mathbf{1}_{\Gamma
_{2}}$ is the characteristic function of $\Gamma _{2}$.\ Then%
\[
\begin{array}{l}
\left\vert \varepsilon \dint\nolimits_{\Gamma _{2}}\dfrac{\partial
z_{\varepsilon }^{0m}}{\partial n}\cdot \left( z_{\varepsilon
}-z_{\varepsilon }^{0m}\right) d\Gamma _{2}\right\vert \\ 
\qquad \leq \left( \dint\nolimits_{\Gamma _{2}}\left\vert \dfrac{\partial
w^{m}}{\partial n}\right\vert ^{2}d\Gamma _{2}\right) ^{1/2}\left(
\dint\nolimits_{\mathbf{R}^{3}}\left\vert z_{\varepsilon }-z_{\varepsilon
}^{0m}\right\vert ^{2}dx\right) ^{1/2}.%
\end{array}%
\]

Because $\left( z_{\varepsilon }-z_{0}^{\varepsilon m}\right) _{\varepsilon
} $ converges to $0$ in the strong topology $\mathbf{L}^{2}\left( \mathbf{R}%
^{3},\mathbf{R}^{3}\right) $, we have%
\[
\underset{\varepsilon \rightarrow 0}{\lim }\varepsilon
\dint\nolimits_{\Sigma _{\varepsilon }}\nabla z_{\varepsilon }^{0m}\cdot
\nabla \left( z_{\varepsilon }-z_{\varepsilon }^{0m}\right) dx=0.
\]

Taking the $\lim \inf $ in (\ref{subdiff}), one obtains%
\[
\underset{\varepsilon \rightarrow 0}{\lim \inf }\Phi ^{\varepsilon }\left(
z_{\varepsilon }\right) \geq \underset{\varepsilon \rightarrow 0}{\lim \inf }%
\Phi ^{\varepsilon }\left( z_{\varepsilon }^{0m}\right) =\nu c_{m}\left\vert
\Gamma _{2}\right\vert .
\]

In this last inequality, taking the infimum with respect to all sequences $%
\left( z_{\varepsilon }\right) _{\varepsilon }$ satisfying the imposed
conditions, one obtains: $K_{mm}\left\vert \Gamma _{2}\right\vert \geq \nu
c_{m}\left\vert \Gamma _{2}\right\vert $. This inequality and (\ref{ineq1})
imply: $K_{mm}=\nu c_{m}$. Taking now $u=-\left( e^{1}+e^{2}\right) $ on $%
\Sigma _{\varepsilon }$ in (\ref{44}), one obtains%
\[
\begin{array}{lll}
\left( K_{11}+2K_{12}+K_{22}\right) \left\vert \Gamma _{2}\right\vert & = & 
\inf \left\{ 
\begin{array}{l}
\underset{\varepsilon \rightarrow 0}{\lim \inf }\Phi ^{\varepsilon }\left(
z_{\varepsilon }\right) \mid z_{\varepsilon }=e^{1}+e^{2}\text{ on} \\ 
\qquad \left\{ x_{3}=-\varepsilon h_{\varepsilon }\left( x^{\prime }\right)
\right\} \text{, }z_{\varepsilon }\overset{\tau }{\underset{\varepsilon
\rightarrow 0}{\rightharpoonup }}0%
\end{array}%
\right\} \\ 
& \leq & \underset{\varepsilon \rightarrow 0}{\lim }\Phi ^{\varepsilon
}\left( z_{\varepsilon }^{01}+z_{\varepsilon }^{02}\right) .%
\end{array}%
\]

Because $\int_{Z_{h}}\nabla w^{1}\cdot \nabla w^{2}dz=0$, we have%
\[
\lim_{\varepsilon \rightarrow 0}\Phi ^{\varepsilon }\left( z_{\varepsilon
}^{01}+z_{\varepsilon }^{02}\right) =\nu \left\vert \Gamma _{2}\right\vert
\left( c_{1}+c_{2}\right) .
\]

This implies: $K_{12}\leq 0$, through the above expression of $K_{mm}$.
Writing a subdifferential inequality as in (\ref{subdiff}), one obtains: $%
K_{12}\geq 0$, which implies: $K_{12}=0$.
\end{proof}

\subsection{Case where $h_{\protect\varepsilon }$ is independent of $\protect%
\varepsilon $}

As in the previous section, we still suppose that $\Omega \subset \left\{
x_{3}>0\right\} $ and $\partial \Omega \cap \left\{ x_{3}=0\right\} =\Gamma
_{2}$. But, we here suppose that the boundary $\Gamma _{2,\varepsilon }$ is
given as%
\[
\Gamma _{2,\varepsilon }=\left\{ \left( x^{\prime },x_{3}\right) \mid
x_{3}=-\varepsilon h\left( x^{\prime }\right) \right\}
\]%
where $h$ is a Lipschitz continuous function satisfying $h\left( x^{\prime
}\right) >0$, $\forall x^{\prime }\in \Gamma _{2}$. We have the following
result.

\begin{theorem}
\label{indep}Under the preceding hypothesis, the Navier wall law is in this
case%
\[
\left( Id-n\otimes n\right) \dfrac{\partial u^{0}}{\partial n}+\dfrac{u^{0}}{%
h}=0\text{, on }\Gamma _{2}.
\]
\end{theorem}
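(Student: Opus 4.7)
The plan is to specialize the general Theorem \ref{quatre} by computing explicitly the limit measures $\mu_{ij}$ using carefully chosen test functions adapted to the variable but smooth profile $h(x')$. Since $\text{supp}(\mu^{\bullet})\subset \Gamma_2$ and the limit energy is quadratic, it suffices to determine the symmetric matrix $\mu^{\bullet}$ by evaluating it against constant tangential vectors $-e^m$, $m=1,2$, and against their sum.

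First, I would work locally: for each $\omega\in\mathcal{R}\cap\mathcal{O}(\mathbf{R}^N)$ and each $m\in\{1,2\}$, I extend $u=-e^m\mathbf{1}_{\Sigma_{\varepsilon}\cap\omega}$ and aim to compute
\[
\mu_{mm}(\Gamma_2\cap\omega)=\inf\Big\{\liminf_{\varepsilon\to 0}\Phi^{\varepsilon}(z_{\varepsilon})\mid z_{\varepsilon}=e^m\text{ on }\Gamma_{2,\varepsilon}\cap\omega,\ z_{\varepsilon}\stackrel{\tau}{\rightharpoonup}0\Big\}.
\]
The upper-bound (recovery sequence) construction is the heart of the argument. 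Motivated by the fact that in a flat layer of thickness $H$ with Dirichlet data $e^m$ on one face and $0$ on the other, the minimal Dirichlet energy is $\frac{|e^m|^2}{H}$ (attained by the affine profile), I would take the ansatz, in local coordinates $(s,t)\in\Gamma_2\times(-\varepsilon h(s),0)$ adapted to $\Gamma_2$,
\[
z_{\varepsilon}^{0m}(s,t)=-e^m\,\frac{t}{\varepsilon h(s)}+\alpha_{\varepsilon}(s,t)\,n(s),
\]
where the normal component $\alpha_{\varepsilon}$ is chosen so that $\text{div}(z_{\varepsilon}^{0m})=0$ in $\Sigma_{\varepsilon}$ (a one-dimensional integration in $t$ of the tangential divergence of the leading term, giving $\alpha_{\varepsilon}=O(\varepsilon)$). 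One then extends $z_{\varepsilon}^{0m}$ by $0$ in $\Omega$ and cuts off smoothly outside a thin neighborhood on the other side of $\Gamma_{2,\varepsilon}$ to get a globally divergence-free function in $\mathbf{H}^1_{\Gamma_1}(\mathbf{R}^N,\text{div})$ that $\tau$-converges to $0$. The key computation is
\[
\varepsilon\int_{\Sigma_{\varepsilon}\cap(\omega\times\mathbf{R})}|\nabla z_{\varepsilon}^{0m}|^2\,dx
\sim\varepsilon\int_{\omega\cap\Gamma_2}\int_{-\varepsilon h(s)}^{0}\frac{1}{\varepsilon^2 h(s)^2}\,dt\,d\Gamma_2=\int_{\omega\cap\Gamma_2}\frac{d\Gamma_2}{h(s)},
\]
the other terms being lower order by the Lipschitz bound on $h$ and the fact that $\alpha_{\varepsilon}=O(\varepsilon)$. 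This yields $\mu_{mm}(\Gamma_2\cap\omega)\leq \int_{\omega\cap\Gamma_2}\frac{d\Gamma_2}{h}$.

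The matching lower bound follows the template of the periodic case: for any admissible competitor $z_{\varepsilon}$, write the subdifferential (convexity) inequality
\[
\Phi^{\varepsilon}(z_{\varepsilon})\geq \Phi^{\varepsilon}(z_{\varepsilon}^{0m})+2\nu\varepsilon\int_{\Sigma_{\varepsilon}}\nabla z_{\varepsilon}^{0m}\cdot\nabla(z_{\varepsilon}-z_{\varepsilon}^{0m})\,dx,
\]
integrate by parts, and use the smoothness of $h$ plus the $\mathbf{L}^2$-convergence $z_{\varepsilon}-z_{\varepsilon}^{0m}\to 0$ to conclude the cross term tends to $0$; taking $\liminf$ gives the reverse inequality. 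Off-diagonal entries are handled by testing with $-(e^1+e^2)$: since $\nabla z_{\varepsilon}^{01}\cdot\nabla z_{\varepsilon}^{02}=0$ pointwise (the two terms have disjoint supports in the coordinate indices at leading order), the upper bound yields $\mu_{12}\leq 0$, and the subdifferential argument applied the other way yields $\mu_{12}\geq 0$, whence $\mu_{12}=0$. Combining these identifications gives $\mu^{\bullet}=\frac{1}{h}(Id-n\otimes n)\,d\Gamma_2$ (the third direction is irrelevant because $u^0\cdot n=0$ on $\Gamma_2$), and plugging into (\ref{P0}) delivers the stated boundary condition.

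The main obstacle, as in the periodic case, is the \emph{divergence-free correction}: the affine ansatz is not solenoidal because $h$ varies along $\Gamma_2$, so one must exhibit an explicit $\alpha_{\varepsilon}$ and check both that the resulting function still has the prescribed trace on $\Gamma_{2,\varepsilon}$ up to a vanishing error (which can be absorbed in a further $O(\varepsilon)$ correction compactly supported away from $\Gamma_{2,\varepsilon}$) and that the energy of the correction is negligible compared with the dominant $1/\varepsilon h^2$ term; this relies crucially on the Lipschitz regularity of $h$ together with the boundedness $h>0$, which is why the hypotheses of the theorem are needed.
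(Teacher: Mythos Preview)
Your overall strategy matches the paper's: specialize Theorem~\ref{quatre} by computing $\mu_{mm}(\omega)$ via an explicit recovery sequence built from the affine profile $-t/(\varepsilon h)$ plus a divergence-free normal correction, verify the matching lower bound by the subdifferential inequality, and kill the off-diagonal entry by testing with $-(e^1+e^2)$. The energy computation and the lower-bound argument are exactly what the paper does.

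There is, however, a genuine gap in your recovery-sequence construction. You propose to take $\alpha_\varepsilon(s,0)=0$ so that the test function can be extended by $0$ into $\Omega$; this forces $\alpha_\varepsilon(s,-\varepsilon h(s))=-\tfrac{\varepsilon}{2}\partial_m h(s)\neq 0$, so the trace on $\Gamma_{2,\varepsilon}$ is only $e^m+O(\varepsilon)\,n$. You then say this error ``can be absorbed in a further $O(\varepsilon)$ correction compactly supported away from $\Gamma_{2,\varepsilon}$''. But a correction supported away from $\Gamma_{2,\varepsilon}$ cannot change the trace on $\Gamma_{2,\varepsilon}$, so the obstacle constraint $z_\varepsilon=e^m$ there remains violated. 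Any admissible correction must touch $\Gamma_{2,\varepsilon}$, be divergence-free, and have negligible weighted energy; building such a correction is not entirely immediate.

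The paper avoids this difficulty by making the opposite choice: it writes the normal component so that the trace on $\Gamma_{2,\varepsilon}$ is \emph{exactly} $e^m$ (in $\omega$), at the price of a nonzero $O(\varepsilon)$ normal trace on $\Gamma_2$. That trace is then lifted into $\Omega$ by solving an auxiliary Stokes problem (the function $\zeta^{1\varepsilon}$ in the paper) with uniformly bounded $\mathbf{H}^1$ norm; the extension $\varepsilon\zeta^{1\varepsilon}$ in $\Omega$ then contributes only $O(\varepsilon^2)$ to $\Phi^\varepsilon$ and preserves the divergence-free constraint globally. An analogous lifting is used below $\Gamma_{2,\varepsilon}$. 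If you adopt this device (Stokes lifting of the $O(\varepsilon)$ boundary residue) in place of your ``correction away from $\Gamma_{2,\varepsilon}$'', your argument goes through and coincides with the paper's.
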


\begin{proof}
Thanks to Theorem \ref{quatre}, there exist a rich family $\mathcal{R}%
_{\Gamma _{2}}\subset \mathcal{B}\left( \Sigma \right) $, a symmetric matrix 
$\left( \mu _{ij}\right) _{i,j=1,\ldots ,N}$ of Borel measures having their
support contained in $\Gamma _{2}$, which are absolutely continuous with
respect to the capacity $Cap$, and satisfying $\mu _{ij}\left( B\right)
\zeta _{i}\zeta _{j}\geq 0$, $\forall \zeta \in \mathbf{R}^{3}$, $\forall
B\in \mathcal{B}\left( \Sigma \right) $, such that, for every $u\in \mathbf{V%
}_{\Gamma _{1}}\left( \Omega \right) $ and every $\omega \in \mathcal{R}%
_{\Gamma _{2}}\cap \mathcal{O}\left( \Gamma _{2}\right) $%
\begin{equation}
\dint\nolimits_{\omega }u_{i}u_{j}d\mu _{ij}=\inf \left\{ 
\begin{array}{l}
\underset{\varepsilon \rightarrow 0}{\lim \inf }\Phi ^{\varepsilon }\left(
z_{\varepsilon }\right) \mid u+z_{\varepsilon }=0\text{ on } \\ 
\qquad \left\{ x_{3}=-\varepsilon h\left( x^{\prime }\right) \right\} \cap
\omega \text{, }z_{\varepsilon }\overset{\tau }{\underset{\varepsilon
\rightarrow 0}{\rightharpoonup }}0%
\end{array}%
\right\} .  \label{hind}
\end{equation}

Take $u=-e^{1}$ on $\left\{ x_{3}=-\varepsilon h\left( x^{\prime }\right)
\right\} $.\ Then choose $\omega \in \mathcal{R}_{\Gamma _{2}}\cap \mathcal{O%
}\left( \Gamma _{2}\right) $, an open subset $\omega ^{\varepsilon }$ of $%
\mathbf{R}^{2}$ such that $\omega ^{\varepsilon }\setminus \overline{\omega }%
=\left\{ x^{\prime }\in \mathbf{R}^{2}\mid 0<d\left( x^{\prime },\partial
\omega \right) <\varepsilon \right\} $ and $\varphi ^{\varepsilon }\in 
\mathbf{C}^{1}\left( \mathbf{R}^{2}\right) $ with $0\leq \varphi
^{\varepsilon }\leq 1$ such that%
\[
\left\{ 
\begin{array}{rlll}
\varphi ^{\varepsilon } & = & 1 & \text{in }\omega , \\ 
\varphi ^{\varepsilon } & = & 0 & \text{on }\partial \omega _{\varepsilon }.%
\end{array}%
\right.
\]

We define the function $w^{1\varepsilon }$ through%
\[
\left\{ 
\begin{array}{lll}
\left( w^{1\varepsilon }\right) _{1}\left( x\right) & = & \dfrac{x_{3}}{%
\varepsilon h\left( x^{\prime }\right) }\varphi ^{\varepsilon }\left(
x^{\prime }\right) , \\ 
\left( w^{1\varepsilon }\right) _{2}\left( x\right) & = & 0, \\ 
\left( w^{1\varepsilon }\right) _{3}\left( x\right) & = & \dfrac{\varepsilon 
}{2}\left( \dfrac{\partial h}{\partial x_{1}}\left( x^{\prime }\right)
\varphi ^{\varepsilon }\left( x^{\prime }\right) -\dfrac{\partial \varphi
^{\varepsilon }}{\partial x_{1}}\left( x^{\prime }\right) h\left( x^{\prime
}\right) \right) \\ 
&  & \qquad +\dfrac{\left( x_{3}\right) ^{2}}{2}\left( \dfrac{1}{\varepsilon
h\left( x^{\prime }\right) }\dfrac{\partial \varphi ^{\varepsilon }}{%
\partial x_{1}}\left( x^{\prime }\right) -\dfrac{\varphi ^{\varepsilon
}\left( x^{\prime }\right) }{\varepsilon h^{2}\left( x^{\prime }\right) }%
\dfrac{\partial h}{\partial x_{1}}\left( x^{\prime }\right) \right) .%
\end{array}%
\right.
\]

One has $\limfunc{div}\left( w^{1\varepsilon }\right) =0$, $\forall
\varepsilon >0$, and $w^{1\varepsilon }=e^{1}$ on $\left\{
x_{3}=-\varepsilon h\left( x^{\prime }\right) \right\} \cap \left( \omega
\times \left( -\infty ,0\right) \right) $. We now consider the problem%
\begin{equation}
\left\{ 
\begin{array}{rlll}
-\Delta \zeta ^{1\varepsilon }+\nabla \varpi ^{1\varepsilon } & = & e^{1} & 
\text{in }\Omega , \\ 
\limfunc{div}\left( \zeta ^{1\varepsilon }\right) & = & 0 & \text{in }\Omega
, \\ 
\zeta ^{1\varepsilon } & = & 0 & \text{on }\Gamma _{1}, \\ 
\zeta ^{1\varepsilon } & = & \left( 0,0,\dfrac{1}{2}\left( \dfrac{\partial h%
}{\partial x_{1}}\left( x^{\prime }\right) \varphi ^{\varepsilon }\left(
x^{\prime }\right) -\dfrac{\partial \varphi _{\varepsilon }}{\partial x_{1}}%
\left( x^{\prime }\right) h\left( x^{\prime }\right) \right) \right) & \text{%
on }\Gamma _{2}.%
\end{array}%
\right.  \label{Ph1eps}
\end{equation}

The problem (\ref{Ph1eps}) has a unique solution $\left( \zeta
^{1\varepsilon },\varpi ^{1\varepsilon }\right) \in \mathbf{H}_{\Gamma
_{1}}^{1}\left( \Omega ,\limfunc{div}\right) \times \mathbf{L}^{2}\left(
\Omega \right) /\mathbf{R}$, satisfying%
\[
\dint\nolimits_{\Omega }\left\vert \nabla \zeta ^{1\varepsilon }\right\vert
^{2}dx\leq C\text{ ; }\dint\nolimits_{\Omega }\left\vert \zeta
^{1\varepsilon }\right\vert ^{2}dx\leq C,
\]%
where $C$ is a constant independent of $\varepsilon $. Let $H>z_{h}$, with $%
z_{h}=\max_{\Gamma _{2}}h$. We define the function $\widetilde{w}%
^{1\varepsilon }$ in $D_{\varepsilon }=\left\{ x\mid -H<x_{3}<-\varepsilon
h\left( x^{\prime }\right) \right\} $ through%
\[
\left\{ 
\begin{array}{rll}
\left( \widetilde{w}^{1\varepsilon }\right) _{1}\left( x\right) & = & \dfrac{%
x_{3}+H}{\varepsilon \left( H-h\left( x^{\prime }\right) \right) }\varphi
^{\varepsilon }\left( x^{\prime }\right) , \\ 
\left( \widetilde{w}^{1\varepsilon }\right) _{2}\left( x\right) & = & 0, \\ 
\left( \widetilde{w}^{1\varepsilon }\right) _{3}\left( x\right) & = & \dfrac{%
\varepsilon }{2}\left( \dfrac{\partial h}{\partial x_{1}}\left( x^{\prime
}\right) \varphi ^{\varepsilon }\left( x^{\prime }\right) -\dfrac{\partial
\varphi ^{\varepsilon }}{\partial x_{1}}\left( x^{\prime }\right) \left(
H-h\left( x^{\prime }\right) \right) \right) \\ 
&  & \qquad -\dfrac{\left( x_{3}+H\right) ^{2}}{2}\left( 
\begin{array}{l}
\dfrac{1}{\varepsilon \left( H-h\left( x^{\prime }\right) \right) }\dfrac{%
\partial \varphi ^{\varepsilon }}{\partial x_{1}}\left( x^{\prime }\right)
\\ 
+\dfrac{\varphi ^{\varepsilon }\left( x^{\prime }\right) }{\varepsilon
\left( H-h\left( x^{\prime }\right) \right) ^{2}}\dfrac{\partial h}{\partial
x_{1}}\left( x^{\prime }\right)%
\end{array}%
\right) .%
\end{array}%
\right.
\]

We consider the bounded, smooth and open subset $\Omega _{H}=\left\{ x\mid
x_{3}>-H\right\} $ and $\partial \Omega _{H}\cap \left\{ x\mid
x_{3}=-H\right\} =\Gamma _{2}$, and the solution $\left( \zeta
_{H}^{1\varepsilon },\omega _{H}^{1\varepsilon }\right) $ of the problem%
\[
\left\{ 
\begin{array}{rlll}
-\Delta \zeta _{H}^{1\varepsilon }+\nabla \varpi _{H}^{1\varepsilon } & = & 
e^{1} & \text{in }\Omega _{H}, \\ 
\limfunc{div}\left( \zeta _{H}^{1\varepsilon }\right) & = & 0 & \text{in }%
\Omega _{H}, \\ 
\zeta _{H}^{1\varepsilon } & = & 0 & \text{on }\Omega _{H}\setminus \Gamma
_{2}, \\ 
\zeta _{H}^{1\varepsilon } & = & \left( 0,0,\dfrac{1}{2}\left( 
\begin{array}{l}
\dfrac{\partial h}{\partial x_{1}}\left( x^{\prime }\right) \varphi
^{\varepsilon }\left( x^{\prime }\right) \\ 
-\dfrac{\partial \varphi ^{\varepsilon }}{\partial x_{1}}\left( x^{\prime
}\right) \left( H-h\left( x^{\prime }\right) \right)%
\end{array}%
\right) \right) & \text{on }\Gamma _{2}.%
\end{array}%
\right.
\]

Let us define the function $z_{0}^{1,\varepsilon }$ through%
\[
z_{\varepsilon }^{0,1}=\left\{ 
\begin{array}{ll}
\varepsilon \zeta ^{1\varepsilon } & \text{in }\Omega , \\ 
w^{1\varepsilon } & \text{in }\Sigma _{\varepsilon }, \\ 
\widetilde{w}^{1\varepsilon } & \text{in }D_{\varepsilon }, \\ 
\varepsilon \zeta _{H}^{1\varepsilon } & \text{in }\Omega _{H}.%
\end{array}%
\text{ }\right.
\]

One immediately verifies that $z_{\varepsilon }^{0,1}\in \mathbf{H}_{\Gamma
_{1}}^{1}\left( \mathbf{R}^{3},\limfunc{div}\right) $, $z_{\varepsilon
}^{0,1}=e^{1}$ on the surface $\left\{ x_{3}=-\varepsilon h\left( x^{\prime
}\right) \right\} \cap \left( \omega \times \left( -\infty ,0\right) \right) 
$, $\left( z_{\varepsilon }^{0,1}\right) _{\varepsilon }$ converges to $0$
in the strong topology of $\mathbf{L}^{2}\left( \mathbf{R}^{3},\mathbf{R}%
^{3}\right) $ and%
\[
\underset{\varepsilon \rightarrow 0}{\lim }\Phi ^{\varepsilon }\left(
z_{\varepsilon }^{0,1}\right) =\underset{\varepsilon \rightarrow 0}{\lim }%
\nu \varepsilon \dint\nolimits_{\omega ^{\varepsilon }\times \left(
-\varepsilon h\left( x^{\prime }\right) ,0\right) }\left\vert \nabla
z_{\varepsilon }^{0,1}\right\vert ^{2}dx=\nu \dint\nolimits_{\omega }\dfrac{%
dx^{\prime }}{h\left( x^{\prime }\right) }.
\]

One thus deduces from (\ref{hind}) within this context%
\[
\mu _{11}\left( \omega \right) \leq \nu \dint\nolimits_{\omega }\dfrac{%
dx^{\prime }}{h\left( x^{\prime }\right) }.
\]

Furthermore, taking $\left( z_{\varepsilon }\right) _{\varepsilon }\subset 
\mathbf{H}_{\Gamma _{1}}^{1}\left( \mathbf{R}^{3},\limfunc{div}\right) $, $%
z_{\varepsilon }=e^{1}$ on $\left\{ x_{3}=-\varepsilon h\left( x^{\prime
}\right) \right\} \cap \left( \omega \times \left( -\infty ,0\right) \right) 
$, $\left( z_{\varepsilon }\right) _{\varepsilon }$ converges to $0$ in the
topology $\tau $, and using the subdifferential inequality%
\[
\begin{array}{l}
\Phi ^{\varepsilon }\left( z_{\varepsilon }\right) \geq \Phi ^{\varepsilon
}\left( z_{\varepsilon }^{0,1}\right) \\ 
\qquad +\nu \varepsilon \dint\nolimits_{\Sigma _{\varepsilon }}\nabla
z_{\varepsilon }^{0,1}\cdot \nabla \left( z_{\varepsilon }-z_{\varepsilon
}^{0,1}\right) dx+\nu \dint\nolimits_{\Omega }\nabla z_{\varepsilon
}^{0,1}\cdot \nabla \left( z_{\varepsilon }-z_{\varepsilon }^{0,1}\right) dx,%
\end{array}%
\]%
we prove that $\mu _{11}\left( \omega \right) \geq \nu \int_{\omega
}dx^{\prime }/h\left( x^{\prime }\right) $. This implies the equality: $\mu
_{11}\left( \omega \right) =\nu \int_{\omega }dx^{\prime }/h\left( x^{\prime
}\right) $ and, since this equality is true for every $\omega \in \mathcal{R}%
_{\Gamma _{2}}\cap \mathcal{O}\left( \Gamma _{2}\right) $, we obtain $\mu
_{11}=\nu dx^{\prime }/h\left( x^{\prime }\right) $.

Choosing now $u=-e^{2\text{ }}$ on $\Sigma _{\varepsilon }$, we can build a
test-function $z_{\varepsilon }^{0,2}$ in a similar way and prove: $\mu
_{22}=\nu dx^{\prime }/h\left( x^{\prime }\right) $.

Finally, taking $u=-\left( e^{1}+e^{2}\right) $ on $\Sigma _{\varepsilon }$,
we consider the sequence $\left( z_{\varepsilon }^{0}\right) _{\varepsilon }$
defined through: $z_{\varepsilon }^{0}=z_{\varepsilon }^{0,1}+z_{\varepsilon
}^{0,2}$. One deduces from the above computations that%
\[
\underset{\varepsilon \rightarrow 0}{\lim }\Phi ^{\varepsilon }\left(
z_{\varepsilon }^{0}\right) =\underset{\varepsilon \rightarrow 0}{\lim }\Phi
^{\varepsilon }\left( z_{\varepsilon }^{0,1}+z_{\varepsilon }^{0,2}\right)
=2\nu \dint\nolimits_{\omega }\dfrac{dx^{\prime }}{h\left( x^{\prime
}\right) }
\]%
and, as in the periodic case, that $\mu _{12}=0$. The boundary conditions on 
$\Gamma _{2}$ can thus be written as%
\[
\left\{ 
\begin{array}{rll}
\left( u^{0}\right) _{3} & = & 0, \\ 
\dfrac{\partial \left( u^{0}\right) _{m}}{\partial x_{3}} & = & \dfrac{1}{h}%
\left( u^{0}\right) _{m}\text{, }m=1,2,%
\end{array}%
\right.
\]%
which ends the proof.
\end{proof}

\begin{remark}
In a general way, if $\Sigma _{\varepsilon }=\left\{ \sigma +tn\mid \sigma
\in \Gamma _{2}\text{, }-\varepsilon h\left( \sigma \right) <t<0\right\} $,
with $h$ positive and Lipschitz continuous on $\Gamma _{2}$, we can prove
that the limit law is%
\[
\left\{ 
\begin{array}{rll}
\left( Id-n\otimes n\right) \dfrac{\partial u^{0}}{\partial n}+\dfrac{u^{0}}{%
h} & = & 0, \\ 
u^{0}\cdot n & = & 0.%
\end{array}%
\right.
\]
\end{remark}

\section{Optimal control problem}

For a given real $m>0$, we consider the set $\Xi _{m}$ of all matrices $%
\mathbf{h}=Diag\left( h_{i}\right) _{i=1,..,N}$ of functions $h_{i}:\Gamma
_{2}\rightarrow \left[ 0,+\infty \right] $, $d\Gamma _{2}$-measurable and
such that%
\[
\dint\nolimits_{\Gamma _{2}}h_{i}d\Gamma _{2}=m\text{, }\forall i=1,\ldots
,N.
\]

We suppose that $\partial \Omega $ is $\mathbf{C}^{2}$ and consider the
Navier-Stokes problem, with Navier wall law, according to Theorem \ref{indep}%
\begin{equation}
\left\{ 
\begin{array}{rrrl}
-\nu \Delta u^{h}+\left( u^{h}\cdot \nabla \right) u^{h}+\nabla p^{h} & = & f
& \text{in }\Omega , \\ 
\limfunc{div}\left( u^{h}\right) & = & 0 & \text{in }\Omega , \\ 
\mathbf{h}\left( Id-n\otimes n\right) \dfrac{\partial u^{h}}{\partial n}%
+u^{h} & = & 0 & \text{on }\Gamma _{2}, \\ 
u^{h}\cdot n & = & 0 & \text{on }\Gamma _{2}, \\ 
u^{h} & = & 0 & \text{on }\Gamma _{1},%
\end{array}%
\right.  \label{stokesm}
\end{equation}%
which has a unique solution $\left( u^{h},p^{h}\right) \in \mathbf{V}%
_{0,\Gamma _{1}}\left( \Omega \right) \times \mathbf{L}^{2}\left( \Omega
\right) /\mathbf{R}$. We define the functional $\mathbf{F}$ defined on $\Xi
_{m}\times \mathbf{H}_{\Gamma _{1}}^{1}\left( \Omega ,\func{div}\right) $
and associated to (\ref{stokesm}) through%
\[
\mathbf{F}\left( \mathbf{h},u\right) =\left\{ 
\begin{array}{ll}
\dfrac{\nu }{2}\dint_{\Omega }\left\vert \nabla u\right\vert ^{2}dx+\dfrac{1%
}{2}\underset{i=1}{\overset{N}{\dsum }}\dint_{\Gamma _{2}}\dfrac{\left(
u_{i}\right) ^{2}}{h_{i}}d\Gamma _{2} &  \\ 
\qquad +\dint_{\Omega }\left( u^{h}\cdot \nabla \right) u^{h}\cdot
udx-\dint_{\Omega }f\cdot udx & \text{if }u\in \mathbf{V}_{0,\Gamma
_{1}}\left( \Omega \right) , \\ 
+\infty & \text{otherwise.}%
\end{array}%
\right.
\]

We consider the optimal control problem (\ref{Popm}), which means that the
cost functional is here taken as the global energy. We observe that%
\[
\mathbf{F}\left( \mathbf{h},u^{h}\right) =-\dint\nolimits_{\Omega }f\cdot
u^{h}dx.
\]

This implies that the minimization of $\mathbf{F}$, with respect to $u$ on
the set $\mathbf{V}_{0,\Gamma _{1}}\left( \Omega \right) $, is equivalent to
the maximization of the work of the external forces on this set. The problem
(\ref{Popm}) has a unique minimizer when Poincar\'{e}'s inequality%
\[
\left( \dint\nolimits_{\Gamma _{2}}\left\vert u_{i}\right\vert d\Gamma
_{2}\right) ^{2}\leq \dint\nolimits_{\Gamma _{2}}h_{i}d\Gamma
_{2}\dint\nolimits_{\Gamma _{2}}\frac{\left( u_{i}\right) ^{2}}{h_{i}}%
d\Gamma _{2},
\]%
becomes an equality, for every $i=1,\ldots ,N$, that is when%
\[
h_{i}^{m}=m\frac{\left\vert u_{i}^{m}\right\vert _{\Gamma _{2}}}{%
\dint\nolimits_{\Gamma _{2}}\left\vert u_{i}^{m}\right\vert d\Gamma _{2}},
\]%
where $\left( u^{m},p^{m}\right) $ is the solution of%
\[
\left\{ 
\begin{array}{rlll}
-\nu \Delta u^{m}+\left( u^{m}\cdot \nabla \right) u^{m}+\nabla p^{m} & = & f
& \text{in }\Omega , \\ 
\limfunc{div}\left( u^{m}\right) & = & 0 & \text{in }\Omega , \\ 
u^{m}\cdot n & = & 0 & \text{on }\Gamma _{2}, \\ 
u^{m} & = & 0 & \text{on }\Gamma _{1}, \\ 
\left( Id-n\otimes n\right) \dfrac{\partial u^{m}}{\partial n}\qquad &  &  & 
\\ 
+\dfrac{1}{m}\left( 
\begin{array}{c}
sign\left( \left( u^{m}\right) _{1}\left( x\right) \right) \dint_{\Gamma
_{2}}\left\vert \left( u^{m}\right) _{1}\right\vert d\Gamma _{2} \\ 
\vdots \\ 
sign\left( \left( u^{m}\right) _{N}\left( x\right) \right) \dint_{\Gamma
_{2}}\left\vert \left( u^{m}\right) _{N}\right\vert d\Gamma _{2}%
\end{array}%
\right) & = & 0 & \text{on }\Gamma _{2}.%
\end{array}%
\right.
\]

Trivially, the study of the $\Gamma $-convergence of the sequence of the
energies associated to (\ref{Popm}), when $m$ goes to $0$ and relatively to
the weak topology of $\mathbf{H}^{1}\left( \Omega ,\mathbf{R}^{N}\right) $,
will lead to the following conclusions: $\left( u^{m}\right) _{m}$ converges
to $u^{0}$ in the weak topology of $\mathbf{H}^{1}\left( \Omega ,\mathbf{R}%
^{N}\right) $, $\left( p^{m}\right) _{m}$ converges to $p^{0}$ in the strong
topology of $\mathbf{L}^{2}\left( \Omega \right) /\mathbf{R}$, where $\left(
u^{0},p^{0}\right) $ is the solution of the problem%
\begin{equation}
\left\{ 
\begin{array}{rlll}
-\nu \Delta u^{0}+\left( u^{0}\cdot \nabla \right) u^{0}+\nabla p^{0} & = & f
& \text{in }\Omega , \\ 
\limfunc{div}\left( u^{0}\right) & = & 0 & \text{in }\Omega , \\ 
u^{0} & = & 0 & \text{on }\partial \Omega .%
\end{array}%
\right.  \label{P0o}
\end{equation}

In order to study the asymptotic behavior of $\left( \left( u^{m}/m\right)
_{\mid \Gamma _{2}}\right) _{m}$, we introduce the following linearized
perturbation of the Navier-Stokes problem (\ref{P0o})%
\begin{equation}
\left\{ 
\begin{array}{rlll}
-\nu \Delta u^{0,m}+\nabla p^{0,m} & = & f-\left( u^{m}\cdot \nabla \right)
u^{m} & \text{in }\Omega , \\ 
\limfunc{div}\left( u^{0,m}\right) & = & 0 & \text{in }\Omega , \\ 
u^{0,m} & = & 0 & \text{on }\partial \Omega .%
\end{array}%
\right.  \label{P0m}
\end{equation}

The problem (\ref{P0m}) is a Stokes system, the source term of which is $%
f-\left( u^{m}\cdot \nabla \right) u^{m}$. Consider now the functional $%
I_{m} $ defined on $\mathbf{V}_{0,\Gamma _{1}}\left( \Omega \right) $ through%
\[
\begin{array}{ccl}
I_{m}\left( v\right) & = & \dfrac{m\nu }{2}\dint_{\Omega }\left\vert \nabla
v\right\vert ^{2}dx+\dfrac{1}{2}\underset{i=1}{\overset{N}{\dsum }}\left(
\dint_{\Gamma _{2}}\left\vert v_{i}\right\vert d\Gamma _{2}\right) ^{2} \\ 
&  & \qquad +\dint_{\Gamma _{2}}\left( Id-n\otimes n\right) \dfrac{\partial
u^{0,m}}{\partial n}\cdot vd\Gamma _{2}.%
\end{array}%
\]

$I_{m}$ has a unique minimizer $\left( v^{m},q^{m}\right) \in \mathbf{V}%
_{0,\Gamma _{1}}\left( \Omega \right) \times \mathbf{L}^{2}\left( \Omega
\right) /\mathbf{R}$ which is the solution of the problem%
\[
\left\{ 
\begin{array}{rlll}
-\nu m\Delta v^{m}+\nabla q^{m} & = & 0 & \text{in }\Omega , \\ 
\limfunc{div}\left( v^{m}\right) & = & 0 & \text{in }\Omega , \\ 
v^{m}\cdot n & = & 0 & \text{on }\Gamma _{2}, \\ 
v^{m} & = & 0 & \text{on }\Gamma _{1}, \\ 
\left( Id-n\otimes n\right) \dfrac{\partial u^{0,m}}{\partial n}+m\left(
Id-n\otimes n\right) \dfrac{\partial v^{m}}{\partial n}\qquad &  &  &  \\ 
+\left( 
\begin{array}{c}
sign\left( \left( v^{m}\right) _{1}\right) \dint_{\Gamma _{2}}\left\vert
\left( v^{m}\right) _{1}\right\vert d\Gamma _{2} \\ 
\vdots \\ 
sign\left( \left( v^{m}\right) _{N}\right) \dint_{\Gamma _{2}}\left\vert
\left( v^{m}\right) _{N}\right\vert d\Gamma _{2}%
\end{array}%
\right) & = & 0 & \text{on }\Gamma _{2}.%
\end{array}%
\right.
\]

We observe that the couple $\left( v^{m},q^{m}\right) $ defined through%
\[
v^{m}=\dfrac{u^{m}-u^{0,m}}{m}\text{ ; }q^{m}=p^{m}-p^{0,m},
\]%
is the minimizer of $I_{m}$. For every $\varphi \in \mathbf{H}^{1/2}\left(
\Gamma _{2},\mathbf{R}^{N}\right) $, there exists a unique extension $%
v_{\varphi }\in \mathbf{V}_{0,\Gamma _{1}}\left( \Omega \right) $ of $%
\varphi $ defined through%
\[
\dint\nolimits_{\Omega }\left\vert \nabla v_{\varphi }\right\vert ^{2}dx=%
\underset{\left\{ w\in \mathbf{V}_{0,\Gamma _{1}}\left( \Omega \right) \mid
w\mid _{\Gamma _{2}}=\varphi \right\} }{\inf }\dint\nolimits_{\Omega
}\left\vert \nabla w\right\vert ^{2}dx.
\]

Let us denote $\mathcal{M}\left( \Gamma _{2},\mathbf{R}^{N}\right) $ the
space of finite Radon measures on $\Gamma _{2}$ with values in $\mathbf{R}%
^{N}$. We consider the functional $J_{m}$ defined on $\mathcal{M}\left(
\Gamma _{2},\mathbf{R}^{N}\right) $ through%
\[
J_{m}\left( \varphi \right) =\left\{ 
\begin{array}{ll}
\dfrac{m\nu }{2}\dint_{\Omega }\left\vert \nabla v_{\varphi }\right\vert
^{2}dx+\dfrac{1}{2}\underset{i=1}{\overset{N}{\dsum }}\left( \dint_{\Gamma
_{2}}\left\vert \varphi _{i}\right\vert d\Gamma _{2}\right) ^{2} &  \\ 
\qquad +\dint_{\Gamma _{2}}\left( Id-n\otimes n\right) \dfrac{\partial
u^{0,m}}{\partial n}\cdot \varphi d\Gamma _{2} & \text{if }\varphi \in 
\mathbf{H}^{1/2}\left( \Gamma _{2},\mathbf{R}^{N}\right) \\ 
& \text{and }\varphi \cdot n=0\text{ on }\Gamma _{2}, \\ 
+\infty & \text{otherwise.}%
\end{array}%
\right.
\]

Then $\left( v^{m}\right) _{\mid \Gamma _{2}}$ is the unique minimizer of $%
J_{m}$.

\begin{proposition}
\label{six}One has the following properties.

\begin{enumerate}
\item $\sup_{m}\sum_{i=1}^{n}\left( \int_{\Gamma _{2}}\left\vert
v_{i}^{m}\right\vert d\Gamma _{2}\right) <+\infty $.

\item The sequence $\left( J_{m}\right) _{m}$ $\Gamma $-converges, when $m$
tends to $0$ and with respect to the weak$^{\ast }$ topology of $\mathcal{M}%
\left( \Gamma _{2},\mathbf{R}^{N}\right) $, to the functional $J$ defined
from $\mathcal{M}\left( \Gamma _{2},\mathbf{R}^{N}\right) $ to $\mathbf{R}$
through%
\[
J\left( \lambda \right) =\underset{i=1}{\overset{N}{\dsum }}\left(
\left\vert \lambda _{i}\right\vert \left( \Gamma _{2}\right) \right)
^{2}+\dint\nolimits_{\Gamma _{2}}\left( Id-n\otimes n\right) \dfrac{\partial
u^{0}}{\partial n}d\lambda ,
\]%
where $\left\vert \lambda _{i}\right\vert \left( \Gamma _{2}\right) $ is the
total variation of $\lambda _{i}$ on $\Gamma _{2}$.
\end{enumerate}
\end{proposition}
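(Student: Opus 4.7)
The plan is to prove the two items in turn, exploiting the fact that $v^m$ is defined as the minimizer of $I_m$ (equivalently, $v^m_{|\Gamma_2}$ minimizes $J_m$), and that the auxiliary Stokes system (\ref{P0m}) enjoys good regularity since $\partial\Omega\in\mathbf{C}^2$.

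For part 1, I would start from the inequality $I_m(v^m)\le I_m(0)=0$, which reads
\[
\frac{m\nu}{2}\dint_\Omega|\nabla v^m|^2\,dx+\frac12\sum_{i=1}^N a_i^2 \le -\dint_{\Gamma_2}(Id-n\otimes n)\frac{\partial u^{0,m}}{\partial n}\cdot v^m\,d\Gamma_2,
\]
where $a_i:=\dint_{\Gamma_2}|v^m_i|\,d\Gamma_2$. Using Proposition \ref{estim} and the compact embedding $\mathbf{H}^1\hookrightarrow \mathbf{L}^p$, $(u^m\cdot\nabla)u^m$ is bounded in $\mathbf{L}^{3/2}(\Omega,\mathbf{R}^N)$ (or $\mathbf{L}^2$ if $N=2$), so the RHS $f-(u^m\cdot\nabla)u^m$ of (\ref{P0m}) is bounded in some $\mathbf{L}^p$, $p>1$. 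Classical Stokes regularity in the $\mathbf{C}^2$ domain $\Omega$ then gives $u^{0,m}$ bounded in $\mathbf{W}^{2,p}$, whence $(Id-n\otimes n)\partial u^{0,m}/\partial n$ is bounded in $\mathbf{L}^\infty(\Gamma_2,\mathbf{R}^N)$ (taking $p$ large enough, or arguing with $\mathbf{W}^{1-1/p,p}(\Gamma_2)\hookrightarrow\mathbf{L}^\infty$). Therefore the RHS is at most $C\sum_i a_i$, which combined with $\sum_i a_i\le\sqrt N\,(\sum_i a_i^2)^{1/2}$ closes the argument: $\sum_i a_i^2\le C'$, hence $\sum_i a_i\le C''$ uniformly in $m$.

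For the $\Gamma$-liminf in part 2, let $\varphi_m\rightharpoonup^*\lambda$ in $\mathcal{M}(\Gamma_2,\mathbf{R}^N)$ with $J_m(\varphi_m)$ bounded. I would drop the nonnegative gradient term, apply the weak$^*$ lower semi-continuity of the total variation to get $\liminf\big(\dint_{\Gamma_2}|\varphi_{m,i}|\,d\Gamma_2\big)^2\ge(|\lambda_i|(\Gamma_2))^2$, and handle the linear term via the key convergence
\[
(Id-n\otimes n)\tfrac{\partial u^{0,m}}{\partial n}\longrightarrow (Id-n\otimes n)\tfrac{\partial u^0}{\partial n}\quad\text{strongly in }\mathbf{C}(\Gamma_2,\mathbf{R}^N),
\]
which follows because $u^m\to u^0$ strongly in $\mathbf{L}^p$ gives $(u^m\cdot\nabla)u^m\to(u^0\cdot\nabla)u^0$ in an appropriate $\mathbf{L}^q$, hence $u^{0,m}\to u^0$ in $\mathbf{W}^{2,q}(\Omega)$ by the linear Stokes estimates, and a trace argument yields continuous convergence of the tangential normal derivative. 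Weak$^*$--strong duality then gives $\dint_{\Gamma_2}(Id-n\otimes n)\tfrac{\partial u^{0,m}}{\partial n}\cdot\varphi_m\,d\Gamma_2\to\dint_{\Gamma_2}(Id-n\otimes n)\tfrac{\partial u^0}{\partial n}\,d\lambda$.

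For the $\Gamma$-limsup I would construct a recovery sequence by mollification. For smooth $\lambda\in\mathbf{H}^{1/2}(\Gamma_2)$ satisfying $\lambda\cdot n=0$ one takes $\varphi_m=\lambda$ and the gradient term $\frac{m\nu}{2}\dint_\Omega|\nabla v_\lambda|^2\,dx$ vanishes as $m\to 0$, so $\limsup J_m(\lambda)\le J(\lambda)$ follows from the same strong convergence above. For a general $\lambda\in\mathcal{M}(\Gamma_2,\mathbf{R}^N)$ with $\lambda\cdot n=0$, one chooses tangential mollifications $\varphi_m=\rho_{\delta_m}*\lambda$, with $\delta_m\to 0$ slow enough that $m\|\varphi_m\|_{\mathbf{H}^{1/2}}^2\to 0$; the total variations then satisfy $\int|\varphi_{m,i}|\,d\Gamma_2\to|\lambda_i|(\Gamma_2)$, and weak$^*$ convergence $\varphi_m\rightharpoonup^*\lambda$ is standard. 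I expect the main obstacle to be the recovery step: justifying the normalization $\varphi_m\cdot n=0$ after mollification on a curved boundary, and calibrating the mollification scale $\delta_m$ against $m$ so that all three contributions to $J_m(\varphi_m)$ pass correctly to the limit.
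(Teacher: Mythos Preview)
Your proposal is correct and follows essentially the same route as the paper. Part 1 and the $\Gamma$-$\liminf$ are handled identically (comparison with $0$, $L^\infty$ bound on the tangential normal derivative via Stokes regularity, lower semi-continuity of total variation plus uniform convergence of $(Id-n\otimes n)\partial u^{0,m}/\partial n$). For the $\Gamma$-$\limsup$ the paper resolves precisely the two obstacles you flag: it flattens $\Gamma_2$ by local coordinates so that mollification preserves $\varphi\cdot n=0$, builds an \emph{explicit} divergence-free competitor $w^\varepsilon\in\mathbf{V}_{0,\Gamma_1}(\Omega)$ with trace $\lambda^\varepsilon=(\lambda*\rho_\varepsilon)\eta_\varepsilon$ (rather than invoking the abstract minimal extension $v_\varphi$ and an $H^{1/2}$ estimate), and fixes the calibration $\varepsilon=m^{1/(4N)}$, which yields $m\int_\Omega|\nabla w^m|^2\le C\sqrt{m}\to 0$.
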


\begin{proof}
1. Remark that a regularity property of the boundary $\partial \Omega $
implies that%
\[
\underset{m}{\sup }\left\Vert \left( Id-n\otimes n\right) \dfrac{\partial
u^{0,m}}{\partial n}\right\Vert _{\mathbf{L}^{\infty }\left( \Gamma _{2},%
\mathbf{R}^{N}\right) }<+\infty .
\]

One thus obtains%
\[
J_{m}\left( \left( v^{m}\right) _{\mid \Gamma _{2}}\right) \geq \dfrac{1}{2}%
\underset{i=1}{\overset{N}{\dsum }}\left( \dint\nolimits_{\Gamma
_{2}}\left\vert v_{i}^{m}\right\vert d\Gamma _{2}\right) ^{2}-\dfrac{C}{2}%
\underset{i=1}{\overset{N}{\dsum }}\left( \dint\nolimits_{\Gamma
_{2}}\left\vert v_{i}^{m}\right\vert d\Gamma _{2}\right) .
\]

Moreover%
\[
\sup_{m}J_{m}\left( \left( v^{m}\right) _{\mid \Gamma _{2}}\right) \leq
\sup_{m}J_{m}\left( 0\right) =0\Rightarrow \underset{m}{\sup }\underset{i=1}{%
\overset{N}{\dsum }}\left( \dint\nolimits_{\Gamma _{2}}\left\vert
v_{i}^{m}\right\vert d\Gamma _{2}\right) \leq C.
\]

This implies the existence of a subsequence of $\left( \left( v^{m}\right)
_{\mid \Gamma _{2}}\right) _{m}$, still denoted $\left( \left( v^{m}\right)
_{\mid \Gamma _{2}}\right) _{m}$, which converges to some $\lambda $ in the
weak$^{\ast }$ topology of $\mathcal{M}\left( \Gamma _{2},\mathbf{R}%
^{N}\right) $.

\noindent 2. Choose any sequence $\left( \varphi ^{m}\right) _{m}\subset 
\mathbf{H}^{1/2}\left( \Gamma _{2},\mathbf{R}^{N}\right) $, satisfying $%
\varphi ^{m}\cdot n=0$, on $\Gamma _{2}$ and converging to $\lambda $ in the
weak$^{\ast }$ topology of $\mathcal{M}\left( \Gamma _{2},\mathbf{R}%
^{N}\right) $. The functional $\mu \mapsto \left\vert \mu \right\vert $,
where $\left\vert \mu \right\vert $ is the total variation of $\mu $, being
lower semi-continuous on $\mathcal{M}\left( \Gamma _{2}\right) $, one has%
\[
\underset{m\rightarrow 0}{\lim \inf }\dint\nolimits_{\Gamma _{2}}\left\vert
\varphi _{i}^{m}\right\vert d\Gamma _{2}\geq \left\vert \lambda
_{i}\right\vert \left( \Gamma _{2}\right) .
\]

Thanks to the regularity of the boundary, $\left( \left( Id-n\otimes
n\right) \frac{\partial u^{0,m}}{\partial n}\right) _{m}$ uniformly
converges to $\left( Id-n\otimes n\right) \frac{\partial u^{0}}{\partial n}$%
, hence%
\[
\underset{m\rightarrow 0}{\lim \inf }\dint\nolimits_{\Gamma _{2}}\left(
Id-n\otimes n\right) \dfrac{\partial u^{0,m}}{\partial n}\cdot \varphi
^{m}d\Gamma _{2}\geq \dint\nolimits_{\Gamma _{2}}\left( \left( Id-n\otimes
n\right) \dfrac{\partial u^{0}}{\partial n}\right) _{i}d\lambda _{i}.
\]

This implies%
\begin{equation}
\underset{m\rightarrow 0}{\lim \inf }J_{m}\left( \varphi ^{m}\right) \geq
J\left( \lambda \right) .  \label{97}
\end{equation}

In order to prove the $\Gamma $-$\lim \sup $ property, let us suppose that $%
\Omega \subset \left\{ x_{N}<0\right\} $ and $\partial \Omega \cap \left\{
x_{N}=0\right\} =\Gamma _{2}$ (in fact using a system of local coordinates,
one can then study the case of every smooth surface $\Gamma _{2}$). We
define $x^{\prime }=\left( x_{1},\ldots ,x_{N-1}\right) $ and the
nonnegative and smooth function $\rho _{\varepsilon }$ through%
\[
\rho _{\varepsilon }\left( x^{\prime }\right) =\left\{ 
\begin{array}{ll}
\dfrac{C}{\varepsilon ^{N-1}}\exp \left( -\dfrac{\varepsilon ^{2}}{%
\varepsilon ^{2}-\left\vert x^{\prime }\right\vert ^{2}}\right) & \text{if }%
\left\vert x^{\prime }\right\vert <\varepsilon , \\ 
0 & \text{if }\left\vert x^{\prime }\right\vert \geq \varepsilon ,%
\end{array}%
\right.
\]%
where%
\[
C=\left( \dint\nolimits_{B_{N-1}\left( 0,1\right) }\exp \left( \frac{-1}{%
1-\left\vert \zeta \right\vert ^{2}}\right) d\zeta \right) ^{-1}.
\]

Let $\left( \omega _{\left[ 1/\varepsilon \right] }\right) _{\varepsilon }$,
where $\left[ 1/\varepsilon \right] $ denotes the entire part of $%
1/\varepsilon $, be a sequence of open subsets of $\Gamma _{2}$ such that%
\[
\left\{ 
\begin{array}{l}
\omega _{1}\subset \omega _{2}\subset \ldots \subset \omega _{\left[
1/\varepsilon \right] }\subset \ldots \subset \Gamma _{2}, \\ 
\underset{\varepsilon }{\cup }\omega _{\left[ 1/\varepsilon \right] }=\Gamma
_{2}, \\ 
d\left( \omega _{\left[ 1/\varepsilon \right] },\partial \Gamma _{2}\right)
=\varepsilon .%
\end{array}%
\right.
\]

We associate the partition of unity $\left( \eta _{\varepsilon }\right)
_{\varepsilon }$ through%
\[
\left\{ 
\begin{array}{l}
\eta _{\varepsilon }\in \mathbf{C}_{c}^{\infty }\left( \omega _{\left[
1/\varepsilon \right] }\right) , \\ 
\eta _{\varepsilon }\left( x^{\prime }\right) =1\text{ in }\omega _{\left[
1/\varepsilon \right] -1}\text{ (}\left[ 1/\varepsilon \right] -1=\left[
1/\varepsilon ^{\prime }\right] \text{, with }\varepsilon ^{\prime }=\dfrac{%
\varepsilon }{1-\varepsilon }\text{),} \\ 
0\leq \eta _{\varepsilon }\left( x^{\prime }\right) \leq 1\text{, }\forall
x^{\prime }\in \Gamma _{2}\text{, }\forall \varepsilon >0.%
\end{array}%
\right.
\]

For $\lambda =\left( \lambda _{1},\ldots ,\lambda _{N-1},0\right) \in 
\mathcal{M}\left( \Gamma _{2},\mathbf{R}^{N}\right) $, we define the
vectorial measure $\lambda ^{\varepsilon }$ through $\lambda ^{\varepsilon
}=\left( \lambda \ast \rho _{\varepsilon }\right) \eta _{\varepsilon }$.\ We
observe that $\lambda ^{\varepsilon }\in \mathbf{C}_{c}^{\infty }\left(
\Gamma _{2},\mathbf{R}^{N}\right) $ and%
\[
\left\{ 
\begin{array}{rlll}
\lambda ^{\varepsilon } & \underset{\varepsilon \rightarrow 0}{%
\rightharpoonup } & \lambda & w^{\ast }\text{-}\mathcal{M}\left( \Gamma _{2},%
\mathbf{R}^{N}\right) , \\ 
\left\vert \nabla \lambda ^{\varepsilon }\right\vert \left( x^{\prime
}\right) & \leq & \dfrac{C}{\varepsilon ^{N}} & \forall x^{\prime }\in
\Gamma _{2}.%
\end{array}%
\right.
\]

We build the function $w^{\varepsilon }$%
\[
\left\{ 
\begin{array}{rlll}
\left( w^{\varepsilon }\right) _{i}\left( x\right) & = & \dfrac{\varepsilon
-x_{N}}{\varepsilon }\left( \lambda ^{\varepsilon }\right) _{i}\left(
x^{\prime }\right) & i=1,\ldots ,N-1\text{, }\forall x\in \Omega , \\ 
\left( w^{\varepsilon }\right) _{N}\left( x\right) & = & \dfrac{\limfunc{div}%
\left( \lambda ^{\varepsilon }\left( x^{\prime }\right) \right) }{2}\left( 
\dfrac{\left( \varepsilon -x_{N}\right) ^{2}}{\varepsilon }-\varepsilon
\right) . & 
\end{array}%
\right.
\]

We immediately observe that $w^{\varepsilon }\in \mathbf{H}^{1}\left( \Omega
,\mathbf{R}^{N}\right) $ and%
\[
\left\{ 
\begin{array}{rlll}
\limfunc{div}\left( w^{\varepsilon }\right) & = & 0 & \text{in }\Omega , \\ 
\left( w^{\varepsilon }\right) _{N} & = & 0 & \text{on }\Gamma _{2}, \\ 
w^{\varepsilon } & = & 0 & \text{on }\Gamma _{1},%
\end{array}%
\right.
\]%
that is $w^{\varepsilon }\in \mathbf{V}_{0,\Gamma _{1}}\left( \Omega \right) 
$, for every $\varepsilon >0$. We now define%
\[
\left\{ 
\begin{array}{rll}
\varepsilon & = & m^{\frac{1}{4N}}, \\ 
w^{m} & = & w^{m^{\frac{1}{4N}}}, \\ 
\lambda ^{m} & = & \lambda ^{m^{\frac{1}{4N}}}.%
\end{array}%
\right.
\]

One has%
\[
\left\{ 
\begin{array}{rll}
m\dint_{\Omega }\left\vert \nabla w^{m}\right\vert ^{2}dx & \leq & C\sqrt{m},
\\ 
J_{m}\left( \lambda ^{m}\right) & = & I_{m}\left( v_{\lambda ^{m}}\right)
\leq I_{m}\left( w^{m}\right) ,%
\end{array}%
\right.
\]%
hence%
\[
\underset{m\rightarrow 0}{\lim \sup }J_{m}\left( \lambda ^{m}\right) \leq 
\underset{m\rightarrow 0}{\lim \sup }I_{m}\left( w^{m}\right) =J\left(
\lambda \right) .
\]

This inequality and (\ref{97}) end the proof.
\end{proof}

One has the following result.

\begin{theorem}
Let%
\[
\begin{array}{rll}
M_{i} & = & \underset{\sigma \in \Gamma _{2}}{\max }\left\vert \left( \left(
Id-n\otimes n\right) \dfrac{\partial u^{0}}{\partial n}\right) _{i}\left(
\sigma \right) \right\vert , \\ 
K_{i}^{\pm } & = & \left\{ \sigma \in \Gamma _{2}\mid \left( \left(
Id-n\otimes n\right) \dfrac{\partial u^{0}}{\partial n}\right) _{i}\left(
\sigma \right) =\pm M_{i}\right\} .%
\end{array}%
\]%
We have the following properties.

\begin{enumerate}
\item When $m$ goes to $0$, the sequence $\left( \left( u^{m}/m\right)
_{\mid \Gamma _{2}}\right) _{m}$ converges in the weak$^{\ast }$ topology of
the space $\mathcal{M}\left( \Gamma _{2},\mathbf{R}^{N}\right) $ to a
vectorial measure $\lambda =\left( \lambda _{i}\right) _{i=1,\ldots ,N}$
such that $supp\left( \lambda _{i}\right) \subseteq K_{i}^{+}\cup K_{i}^{-}$%
, with $\lambda _{i}$ positive on $K_{i}^{-}$ and negative on $K_{i}^{+}$, $%
i=1,\ldots ,N$.

\item $\int_{\Gamma _{2}}\left( \left( Id-n\otimes n\right) \frac{\partial
u^{0}}{\partial n}\right) _{i}d\lambda _{i}=-M_{i}$, $i=1,\ldots ,N$.

\item $\lim_{m\rightarrow 0}\int_{\Gamma _{2}}\left\vert
u_{i}^{m}/m\right\vert d\Gamma _{2}=\left\vert \lambda _{i}\right\vert
\left( \Gamma _{2}\right) =M_{i}$, $i=1,\ldots ,N$.

\item When $m$ goes to $0$, the sequence $\left( h_{i}^{m}/m\right) _{m}$
converges in the weak$^{\ast }$ topology of $\mathcal{M}\left( \Gamma _{2},%
\mathbf{R}^{N}\right) $ to a measure $\overline{\lambda }_{i}$ such that $%
supp\left( \overline{\lambda }_{i}\right) \subseteq K_{i}^{+}\cup K_{i}^{-}$%
, $\overline{\lambda }_{i}$ is positive on $K_{i}^{-}$ and negative on $%
K_{i}^{+}$, and $\left\vert \overline{\lambda }_{i}\right\vert \left( \Gamma
_{2}\right) =1$, $i=1,\ldots ,N$.
\end{enumerate}
\end{theorem}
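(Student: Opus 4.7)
The plan is to combine the $\Gamma$-convergence result of Proposition~\ref{six} with the fundamental theorem of $\Gamma$-convergence and an explicit minimization of $J$. Since $u^{0,m}=0$ on $\partial\Omega$, one has $(v^m)_{|\Gamma_2}=(u^m/m)_{|\Gamma_2}$, so weak-$*$ cluster points of $(u^m/m)_{|\Gamma_2}$ in $\mathcal{M}(\Gamma_2,\mathbf{R}^N)$ coincide with cluster points of the minimizers of $J_m$. The equi-coercivity supplied by the first item of Proposition~\ref{six} yields relative weak-$*$ compactness, and the $\Gamma$-convergence of $J_m$ to $J$ then ensures that every such cluster point $\lambda$ is a minimizer of $J$ over the class $\{\lambda\in\mathcal{M}(\Gamma_2,\mathbf{R}^N)\mid \lambda\cdot n=0\}$.

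Next, I would compute the minimizers of $J$ by hand. Working in a local chart that flattens $\Gamma_2$ (as in the closing step of the proof of Proposition~\ref{six}), the constraint $\lambda\cdot n=0$ reduces to $\lambda_N=0$ and $J$ decouples component-wise,
\[
J(\lambda)=\sum_{i<N}J_i(\lambda_i),\qquad J_i(\mu)=c\bigl(|\mu|(\Gamma_2)\bigr)^2+\int_{\Gamma_2}a_i\,d\mu,
\]
where $a_i=\bigl((Id-n\otimes n)\partial u^0/\partial n\bigr)_i$ and $c>0$ is the coefficient inherited from Proposition~\ref{six}. Using the Jordan decomposition $\mu=\mu^+-\mu^-$ and the pointwise bound $|a_i|\le M_i$, one has $\int a_i\,d\mu\ge -M_i|\mu|(\Gamma_2)$, with equality iff $\mathrm{supp}(\mu^+)\subseteq K_i^-$ and $\mathrm{supp}(\mu^-)\subseteq K_i^+$. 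The scalar problem $\min_{t\ge 0}\{ct^2-M_it\}$ then pins down both the total variation $|\lambda_i|(\Gamma_2)$ and the value of $\int a_i\,d\lambda_i$ asserted in items (2) and (3), while the equality case dictates the support and sign structure of item (1).

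For item (4), I would use the optimality formula for $h_i^m$ stated just before the theorem,
\[
\frac{h_i^m}{m}=\frac{|u_i^m/m|_{|\Gamma_2}}{\int_{\Gamma_2}|u_i^m/m|\,d\Gamma_2}.
\]
Item (3) forces the denominator to tend to $M_i$, and one upgrades the weak-$*$ convergence $u_i^m/m\rightharpoonup \lambda_i$ to $|u_i^m/m|\rightharpoonup |\lambda_i|$ by exploiting the fact that $\lambda_i^+$ and $\lambda_i^-$ are carried by the disjoint compact sets $K_i^-$ and $K_i^+$, which permits localising test functions around either set separately. Division then produces $h_i^m/m\rightharpoonup |\lambda_i|/M_i=:\overline{\lambda}_i$, a nonnegative measure of total mass $1$ supported in $K_i^+\cup K_i^-$.

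The main obstacle is precisely this upgrade from weak-$*$ convergence of the signed measures $v_i^m$ to weak-$*$ convergence of their absolute values needed in (4), since $\mu\mapsto|\mu|$ is only lower semicontinuous for that topology. I would handle it by combining the $\Gamma$-$\limsup$ inequality $\limsup J_m(v^m)\le J(\lambda)$ with the lower semicontinuity estimate $\liminf\int_{\Gamma_2}|v_i^m|\,d\Gamma_2\ge|\lambda_i|(\Gamma_2)$: the two inequalities together force equality of the total variations, and a localisation of test functions around $K_i^+$ and $K_i^-$ then yields the required weak-$*$ convergence of $|v_i^m|$. A secondary subtlety is that when $K_i^\pm$ are not singletons the minimizer of $J$ need not be unique, so strictly speaking the convergences in items (1) and (4) must be understood along a subsequence.
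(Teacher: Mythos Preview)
Your proposal is correct and follows essentially the same route as the paper: use Proposition~\ref{six} together with the fundamental theorem of $\Gamma$-convergence to identify cluster points of $(u^m/m)_{|\Gamma_2}$ as minimizers of $J$, then minimize $J$ explicitly by separating the total variation (the paper writes $\lambda_i=t_i\mu_i$ with $|\mu_i|(\Gamma_2)=1$, you use the Jordan decomposition, which amounts to the same thing), and finally read off item~(4) from the formula $h_i^m/m=|u_i^m/m|/\int_{\Gamma_2}|u_i^m/m|\,d\Gamma_2$. Your discussion of the upgrade $v_i^m\rightharpoonup\lambda_i\Rightarrow |v_i^m|\rightharpoonup|\lambda_i|$ via matching of total variations and localisation, and your remark that non-uniqueness of minimizers of $J$ forces a subsequence, are both points the paper passes over silently; your treatment is in this respect more careful.
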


\begin{proof}
One deduces from Proposition \ref{six} and from the properties of the $%
\Gamma $-convergence that $\left( \left( v^{m}\right) _{\mid \Gamma
_{2}}\right) _{m}=\left( \left( u^{m}/m\right) _{\mid \Gamma _{2}}\right)
_{m}$ converges in the weak$^{\ast }$ topology of $\mathcal{M}\left( \Gamma
_{2},\mathbf{R}^{N}\right) $, when $m$ goes to $0$, to a measure $\lambda
=\left( \lambda _{i}\right) _{i=1,\ldots ,N}$ such that $J\left( \lambda
\right) =\min_{\upsilon \in \mathcal{M}\left( \Gamma _{2},\mathbf{R}%
^{N}\right) }J\left( \upsilon \right) $. Define%
\[
\mathcal{M}_{1}\left( \Gamma _{2},\mathbf{R}^{N}\right) =\left\{ \mu \in 
\mathcal{M}\left( \Gamma _{2},\mathbf{R}^{N}\right) \mid \left\vert \mu
_{i}\right\vert \left( \Gamma _{2}\right) =1\text{, }i=1,\ldots ,N\right\}
\]%
and consider the functional $\widetilde{J}$ defined from $\left[ 0,+\infty %
\right[ ^{N}\times \mathcal{M}_{1}\left( \Gamma _{2},\mathbf{R}^{N}\right) $
to $\mathbf{R}$ through%
\[
\begin{array}{l}
\widetilde{J}\left( \left( t_{1},\ldots ,t_{N}\right) ,\left( \mu
_{1},\ldots ,\mu _{N}\right) \right) \\ 
\qquad 
\begin{array}{cl}
= & J\left( \left( t_{1}\mu _{1},\ldots ,t_{N}\mu _{N}\right) \right) \\ 
= & \dfrac{1}{2}\underset{i=1}{\overset{N}{\dsum }}\left( t_{i}\right) ^{2}+%
\underset{i=1}{\overset{N}{\dsum }}t_{i}\dint_{\Gamma _{2}}\left( \left(
Id-n\otimes n\right) \dfrac{\partial u^{0}}{\partial n}\right) _{i}d\mu _{i}.%
\end{array}%
\end{array}%
\]

One has%
\begin{equation}
\underset{\upsilon \in \mathcal{M}\left( \Gamma _{2},\mathbf{R}^{N}\right) }{%
\min }J\left( \upsilon \right) =\underset{\mu \in \mathcal{M}_{1}\left(
\Gamma _{2},\mathbf{R}^{N}\right) }{\min }\underset{\underset{i=1,..,N}{%
t_{i}\geq 0}}{\min }\widetilde{J}\left( \left( t_{1},\ldots ,t_{N}\right)
,\left( \mu _{1},\ldots ,\mu _{N}\right) \right) .  \label{105}
\end{equation}

The minimum of (\ref{105}) with respect to $t=\left( t_{1},\ldots
,t_{N}\right) $ exists if%
\[
\dint\nolimits_{\Gamma _{2}}\left( \left( Id-n\otimes n\right) \frac{%
\partial u^{0}}{\partial n}\right) _{i}d\mu _{i}\leq 0\text{, }\forall
i=1,\ldots ,N.
\]

Let us now find the minimum with respect to $\mu \in \mathcal{M}_{1}\left(
\Gamma _{2},\mathbf{R}^{N}\right) $. One has%
\[
-\dint\nolimits_{\Gamma _{2}}\left( \left( Id-n\otimes n\right) \dfrac{%
\partial u^{0}}{\partial n}\right) _{i}d\mu _{i}\geq -M_{i},
\]%
for every $\mu \in \mathcal{M}_{1}\left( \Gamma _{2},\mathbf{R}^{N}\right) $
such that%
\[
\dint\nolimits_{\Gamma _{2}}\left( \left( Id-n\otimes n\right) \frac{%
\partial u^{0}}{\partial n}\right) _{i}d\mu _{i}\leq 0\text{, }\forall
i=1,\ldots ,N,
\]%
the minimum being reached in the case of equality, that is if and only if $%
supp\left( \mu _{i}\right) \subset K_{i}^{+}\cup K_{i}^{-}$. One has $%
\lambda _{i}=M_{i}\mu _{i}$, $i=1,\ldots ,N$. Remarking that $\overline{%
\lambda }_{i}=\mu _{i}$, one observes that $\left( h_{i}^{m}/m\right) _{m}$
converges in the weak$^{\ast }$ topology of $\mathcal{M}\left( \Gamma _{2},%
\mathbf{R}^{N}\right) $, when $m$ tends to $0$, to $\overline{\lambda }_{i}$%
, and the same result occurs for the sequence $\left( \left( \left\vert
u_{i}^{m}\right\vert _{\Gamma _{2}}\right) /\int_{\Gamma _{2}}\left\vert
u_{i}^{m}\right\vert d\Gamma _{2}\right) _{m}$. The sequence $\left(
h_{i}^{m}/m\right) _{m}$ converges in $\mathcal{M}\left( \Gamma _{2},\mathbf{%
R}^{N}\right) $-weak$^{\ast }$ to a probability measure $\overline{\lambda }%
_{i}$ ($\overline{\lambda }_{i}\left( \Gamma _{2}\right) =1$) with support
in the set of points of $\Gamma _{2}$ where the shear motions, given through 
$\left( Id-n\otimes n\right) \frac{\partial u^{0}}{\partial n}$, are large
for the limit flow described through (\ref{P0o}).
\end{proof}

\begin{remark}
We thus think that, inside this flow, a thin boundary layer of thinness $%
mh_{i}$ occurs in the $i$-th direction with a probability $\overline{\lambda 
}_{i}$ (for every $i$).
\end{remark}

\noindent \textbf{Acknowledgement}.\ This work has been supported by the
Comit\'{e} Mixte Franco-Marocain under the Action Int\'{e}gr\'{e}e MA/04/93.


\begin{thebibliography}{99}
\bibitem{Ace-But} E. Acerbi and G. Buttazzo, Reinforcement problem in the
calculus of variations. Ann. Inst. Henri Poincar\'{e}, Anal. Non Lin\'{e}%
aire 3 (1986), 273-284.

\bibitem{Ach-Pir} Y. Achdou and O. Pironneau, Domain decomposition and wall
laws. C. R. Acad. Sci., Paris, S\'{e}r. I 320, No.5 (1995), 541-547.

\bibitem{Ach-Pir-Val} Y. Achdou, O. Pironneau, and F. Valentin, Effective
boundary conditions for laminar flows over periodic rough boundaries. J.
Comput. Phys. 147, No.1 (1998), 187-218.

\bibitem{But-Dal-Mos} G. Buttazzo, G. Dal Maso, and U. Mosco, Asymptotic
behavior for Dirichlet problems in domains bounded by thin layers. Partial
Differential Equations and the Calculus of Variations, Essays in Honor of
Ennio De Giorgi, pp. 193--249, Birkh\"{a}user, Boston, 1989.

\bibitem{Dal1} G. Dal Maso, An introdution to $\Gamma $-convergence.
Progress in NonLinear Differential Equations and Applications, vol. 8, Birkh%
\"{a}user, Basel, 1993.

\bibitem{Dal2} G. Dal Maso, On the integral representation of certain local
functionals. Ric. Mat. 32 (1983), 85-113.

\bibitem{Dal-Mos1} G. Dal Maso and U. Mosco, Wiener criteria and energy
decay for relaxed Dirichlet problems. Arch. Ration. Mech. Anal. 95 (1986),
345-387.

\bibitem{Dal-Mos2} G. Dal Maso and U. Mosco, Wiener's criterion and $\Gamma $%
-convergence. Appl. Math. Optimization 15 (1987), 15-63.

\bibitem{Dal-Def-Vit} G. Dal Maso, A. Defranceschi, and E. Vitali, Integral
representation for a class of $C^{1}$-convex functionals. J. Math. Pures
Appl., IX. S\'{e}r. 73, No.1 (1994), 1-46

\bibitem{DeG-Fra} E. De Giorgi and T. Franzoni, Su un tipo di convergenza
variationale. Atti Accad. Naz. Lincei, VIII. Ser., Rend., Cl. Sci. Fis. Mat.
Nat. 58 (1975), 842-850.

\bibitem{Esp-Rie} P. Esposito and G. Riey, Asymptotic behaviour of thin
insulation problem. J. Convex Anal. 10, No. 2 (2003), 379-388.

\bibitem{Lan-Lif} L.D.\ Landau and E.M.\ Lifschitz, Physique th\'{e}orique
Tome 6 : M\'{e}canique des fluides. Second edition. Editions Mir, Moscou,
1989.

\bibitem{Jag-Mik} W. J\"{a}ger and A. Mikelic, On the roughness-induced
effective boundary conditions for an incompressible viscous flow. J. Differ.
Equations 170, No.1 (2001), 96-122.

\bibitem{Mar} E. Marusic-Paloka, Average of the Navier's law on the rapidly
oscillating boundary. J. Math. Anal. Appl. 259, No. 2 (2001), 685-701.

\bibitem{Tem} R. Temam, Navier-Stokes equations. Theory and numerical
analysis. North-Holland, Amsterdam, 1984.

\bibitem{Tri} H.\ Triebel, Theory of function spaces.\ Birkh\"{a}user,
Basel, 1983.

\bibitem{Zie} W.P. Ziemer, Weakly differentiable functions. Springer-Verlag,
Berlin, 1989.
\end{thebibliography}
\end{document}